\numberwithin{equation}{section}
\numberwithin{figure}{section}
\theoremstyle{plain}
\newtheorem{thm}{\protect\theoremname}[section]
\theoremstyle{plain}
\newtheorem{question}[thm]{\protect\questionname}
\theoremstyle{plain}
\newtheorem{lem}[thm]{\protect\lemmaname}
\theoremstyle{plain}
\newtheorem{prop}[thm]{\protect\propositionname}
\theoremstyle{plain}
\newtheorem{cor}[thm]{\protect\corollaryname}
\theoremstyle{remark}
\newtheorem{rem}[thm]{\protect\remarkname}
\theoremstyle{definition}
\newtheorem{defn}[thm]{\protect\definitionname}
\providecommand{\corollaryname}{Corollary}
\providecommand{\definitionname}{Definition}
\providecommand{\lemmaname}{Lemma}
\providecommand{\propositionname}{Proposition}
\providecommand{\questionname}{Question}
\providecommand{\remarkname}{Remark}
\providecommand{\theoremname}{Theorem}
\begin{document}
\global\long\def\R{\mathbb{R}}%

\global\long\def\C{\mathbb{C}}%

\global\long\def\Q{\mathbb{Q}}%

\global\long\def\Z{\mathbb{Z}}%

\global\long\def\P{\mathbb{P}}%

\global\long\def\T{\mathbb{T}}%

\global\long\def\F{\mathbb{F}}%

\global\long\def\bbN{\mathbb{N}}%

\global\long\def\A{\mathrm{A}}%

\global\long\def\H{\mathrm{H}}%

\global\long\def\D{\mathrm{\mathrm{D}}}%

\global\long\def\B{\mathrm{\mathrm{B}}}%

\global\long\def\N{\mathrm{\mathrm{N}}}%

\global\long\def\inf{\mathrm{inf}}%

\global\long\def\sup{\mathrm{sup}}%

\global\long\def\Hom{\mathbb{\mathrm{Hom}}}%

\global\long\def\Ext{\mathbb{\mathbb{\mathrm{Ext}}}}%

\global\long\def\Ker{\mathbb{\mathrm{Ker}}}%

\global\long\def\Gal{\mathrm{Gal}}%

\global\long\def\Aut{\mathrm{Aut}}%

\global\long\def\End{\mathrm{End}}%

\global\long\def\Char{\mathrm{Char}}%

\global\long\def\rank{\mathrm{rank}}%

\global\long\def\deg{\mathrm{deg}}%

\global\long\def\det{\mathrm{det}}%

\global\long\def\Tr{\mathrm{Tr}}%

\global\long\def\Id{\mathrm{Id}}%

\global\long\def\Spec{\mathrm{Spec}}%

\global\long\def\Lie{\mathrm{Lie}}%

\global\long\def\span{\mathrm{span}}%

\global\long\def\sep{\mathrm{sep}}%

\global\long\def\sgn{\mathrm{sgn}}%

\global\long\def\dist{\mathrm{dist}}%

\global\long\def\val{\mathrm{val}}%

\global\long\def\dR{\mathrm{dR}}%

\global\long\def\st{\mathrm{st}}%

\global\long\def\rig{\mathrm{rig}}%

\global\long\def\cris{\mathrm{cris}}%

\global\long\def\Mat{\mathrm{Mat}}%

\global\long\def\cyc{\mathrm{cyc}}%

\global\long\def\et{\mathrm{\acute{e}t}}%

\global\long\def\Frob{\mathrm{Frob}}%

\global\long\def\SL{\mathrm{SL}}%

\global\long\def\GL{\mathrm{GL}}%

\global\long\def\Br{\mathrm{Br}}%

\global\long\def\Ind{\mathrm{Ind}}%

\global\long\def\LT{\mathrm{LT}}%

\global\long\def\Res{\mathrm{Res}}%

\global\long\def\SL{\mathrm{SL}_{2}}%

\global\long\def\Mod{\mathrm{Mod}}%

\global\long\def\Fil{\mathrm{Fil}}%

\global\long\def\la{\mathrm{la}}%

\global\long\def\pa{\mathrm{pa}}%

\global\long\def\Sen{\mathrm{Sen}}%

\global\long\def\dif{\mathrm{dif}}%

\global\long\def\HT{\mathrm{HT}}%

\global\long\def\Kla{K-\mathrm{la}}%

\global\long\def\Kpa{K-\mathrm{pa}}%

\global\long\def\an{\mathrm{an}}%

\global\long\def\grad{\mathrm{\triangledown}}%

\global\long\def\weak{\rightharpoonup}%

\global\long\def\weakstar{\overset{*}{\rightharpoonup}}%

\title{Lubin-Tate theory and overconvergent Hilbert modular forms of low
weight}
\author{Gal Porat}
\begin{abstract}
Let $K$ be a finite extension of $\Q_{p}$ and let $\Gamma$ be the
Galois group of the cyclotomic extension of $K$. Fontaine's theory
gives a classification of $p$-adic representations of $\Gal\left(\overline{K}/K\right)$
in terms of $(\varphi,\Gamma)$-modules. A useful aspect of this classification
is Berger's dictionary which expresses invariants coming from $p$-adic
Hodge theory in terms of these $\left(\varphi,\Gamma\right)$-modules.

In this article, we use the theory of locally analytic vectors to
generalize this dictionary to the setting where $\Gamma$ is the Galois
group of a Lubin-Tate extension of $K$. As an application, we show
that if $F$ is a totally real number field and $v$ is a place of
$F$ lying above $p$, then the $p$-adic representation of $\Gal\left(\overline{F}_{v}/F_{v}\right)$
associated to a finite slope overconvergent Hilbert eigenform which
is $F_{v}$-analytic up to a twist is Lubin-Tate trianguline. Furthermore,
we determine a triangulation in terms of a Hecke eigenvalue at $v$.
This generalizes results in the case $F=\Q$ obtained previously by
Chenevier, Colmez and Kisin.
\end{abstract}

\maketitle
\tableofcontents{}

\section{Introduction}

Let $p$ be a prime number. Kisin showed in \cite{Ki03} that the
$p$-adic representation $\rho_{f}$ of $\Gal\left(\overline{\Q}/\Q\right)$
attached to a finite slope $p$-adic eigenform $f$ has a very special
property: its restriction to $\Gal\left(\overline{\Q}_{p}/\Q_{p}\right)$
always has a crystalline period. Even better, this period is an eigenvector
for crystalline Frobenius, with eigenvalue coinciding with that arising
from the Hecke action of $U_{p}$ on $f$. Consequently, Kisin was
able to verify the Fontaine-Mazur conjecture for these $p$-adic representations.
In a subsequent work \cite{Co08}, Colmez gave a reinterpertation
of Kisin's result to the effect that the $(\varphi,\Gamma)$-module
attached to $\rho_{f}|_{\Gal\left(\overline{\Q}_{p}/\Q_{p}\right)}$
is an extension of $\left(\varphi,\Gamma\right)$-modules of rank
1, where $\Gamma\cong\Z_{p}^{\times}$ is the Galois group of the
cyclotomic extension of $\Q_{p}$. Colmez coined the term ``trianguline''
for the $p$-adic representations satisfying this property, and studied
them in detail in dimension 2. Then in \cite{Co10} he attached to
any 2-dimensional trianguline representation of $\Gal\left(\overline{\Q}_{p}/\Q_{p}\right)$
a unitary Banach representation of $\GL_{2}\left(\Q_{p}\right)$.
By a suitable continuity argument he was able to extend this procedure
to any $2$-dimensional $p$-adic representation of $\Gal\left(\overline{\Q}_{p}/\Q_{p}\right)$,
thereby constructing the $p$-adic Langlands correspondence for $\GL_{2}(\Q_{p})$.
This circle of ideas came to a satisfying conclusion when Emerton
used this correspondence in \cite{Em11} to show that the trianguline
property at $\Gal\left(\overline{\Q}_{p}/\Q_{p}\right)$ characterizes
these 2-dimensional representations of $\Gal\left(\overline{\Q}/\Q\right)$
which are attached to finite slope $p$-adic eigenforms.

In this article, we are concerned with performing the reinterpertation
step of Colmez in an analogous story when $\Q$ is replaced with a
totally real number field $F$. Namely, the $p$-adic representation
$\rho_{f}$ of $\Gal(\overline{F}/F)$ will be attached to a finite
slope $p$-adic Hilbert eigenform $f$, and we would like to show
$\rho_{f}$ is trianguline at a place $v\mid p$. However, when $F_{v}\neq\Q_{p}$,
there is more than one meaning one can attach to the phrase ``$\rho_{f}$
is trianguline at a place $v\mid p$''. On the one hand, there are
\emph{cyclotomic trianguline} $\Gal(\overline{F}_{v}/F_{v})$-representations.
These are the trianguline representations in the sense of Nakamura
in \cite{Na09}; in that setting, $\Gamma$ is the Galois group of
the cyclotomic extension of $F_{v}$ and is isomorphic to an open
subgroup of $\Z_{p}^{\times}$. On the other hand, there are \emph{Lubin-Tate
trianguline} $\Gal(\overline{F}_{v}/F_{v})$-representations in the
sense of Fourquaux and Xie in \cite{FX13}, where $\Gamma=\Gamma_{F_{v}}$
is the Galois group of a Lubin-Tate extension and is isomorphic to
$\mathcal{O}_{F_{v}}^{\times}$. The representation $\rho_{f}|_{\Gal(\overline{F}_{v}/F_{v})}$
has been known to be cyclotomic trianguline for a while by the global
triangulation theory of Kedlaya-Pottharst-Xiao in \cite{KPX14} and
Liu in \cite{Li12}. Although these results have found many applications,
it seems like this notion of cyclotomic triangulinity may not be the
most suitable for applications to a hypothetical $p$-adic Langlands
correspondence for $\GL_{2}(F_{v})$. Rather, the replacement of $\Z_{p}^{\times}$
by $\mathcal{O}_{F_{v}}^{\times}$ seems more natural, which leads
us in this work to focus on the notion of Lubin-Tate triangulinity
of Fourquaux and Xie.

Let us explain what the difficulties are in proving such a result
when $F_{v}\neq\Q_{p}$. The methods of \cite{KPX14} and \cite{Li12}
can still be used to show the existence of a crystalline period which
is a Frobenius eigenvector. However, Colmez's reformulation of this
condition in terms of $\left(\varphi,\Gamma\right)$-module for $F_{v}=\Q_{p}$
relies on Berger's dictionary, which expresses invariants coming from
$p$-adic Hodge theory in terms of these $\left(\varphi,\Gamma\right)$-modules.
This dictionary is only available in the cyclotomic setting. Indeed,
the proof of this dictionary ultimately relies on Sen theory and the
Cherbonnier-Colmez theorem. Unfortunately, a direct attempt to use
these methods breaks down whenever $F_{v}\neq\Q_{p}$, because of
the failure of the Tate-Sen axioms for Lubin-Tate extensions.

Now let $K$ be a finite extension of $\Q_{p}$. Recall that a representation
$V$ of $\Gal(\overline{K}/K)$ with coefficients in $K$ is called
$K$-analytic if $\C_{p}\otimes_{K}^{\tau}V$ is trivial for each
nontrivial embedding $\tau:K\rightarrow\overline{K}$. In \cite{BC16},
Berger and Colmez were able to find a certain generalization of Sen
theory for Lubin-Tate extensions of $K$ using ideas coming from the
theory of $K$-locally analytic vectors. Berger then used this theory
in \cite{Be16} to prove that $K$-analytic representations are overconvergent,
so that we can associate to $V$ a Lubin-Tate $(\varphi_{q},\Gamma_{K})$-module
$\D_{\rig,K}^{\dagger}(V)$ over the (Lubin-Tate) Robba ring $\B_{\rig,K}^{\dagger}$
(see $\mathsection5$). By adapting the original techniques of \cite{Be02}
to the setting of $K$-locally analytic vectors, we are able to extend
Berger's dictionary to Lubin-Tate extensions of $K$. Our first main
result is the following (see Theorem 5.5).\\

\textbf{Theorem A. }\emph{Let $V$ be a $K$-analytic representation
of $G_{K}$. For $*\in\left\{ \Sen,\dif,\dR,\cris,\st\right\} $,
there is a natural isomorphism
\[
\D_{*,K}(V)\cong\D_{*,K}\left(\D_{\rig,K}^{\dagger}(V)\right).
\]
}\\

For the definition of the functors $\D_{*,K}$ we refer to $\mathsection3$.
When $K=\Q_{p}$ they coincide with the usual definitions and the
theorem is already known, but note that in contrast, it is not a-priori
clear how to even define $\D_{\Sen,K}$ and $\D_{\dif,K}$ in the
general case. 

When $V$ is 2-dimensional, one can deduce from Theorem A that the
Lubin-Tate triangulinity of $V$ can be detected from the existence
of a crystalline period which is a Frobenius eigenvector. On the other
hand, techniques going back to the original paper of Colmez show that
the cyclotomic triangulinity of $V$ can also be detected in a similar
way. From this we deduce that the two notions of triangulinity actually
coincide for $K$-analytic representations of dimension 2 (see Theorem
6.8 for a more precise version).\\

\textbf{Theorem B. }\emph{Let $V$ be a 2-dimensional $K$-analytic
representation of $G_{K}$. Then $V$ is Lubin-Tate trianguline if
and only if $V$ is cyclotomic trianguline.}\\

Although we do not pursue this here, our methods show that under some
genericity assumptions the theorem is true for $V$ of arbitrary dimension.
On the other hand, after the completion of this paper, we have been
informed that a previously unpublished result of Léo Poyeton \cite{Po20}
establishes an equivalence of categories between $K$-analytic Lubin-Tate
$(\varphi_{q},\Gamma_{K})$-modules and $K$-analytic $\B$-pairs.
As an immediate consequence, this gives an independent proof of Theorem
B which extends to $V$ of arbitrary dimension. Indeed, the triangulinity
can be checked in terms of $\B$-pairs, and the rank 1 $\B$-pairs
that appear in the triangulation are attached to both rank 1 cyclotomic
$(\varphi,\Gamma)$-modules and to rank 1 Lubin-Tate $(\varphi_{q},\Gamma_{K})$-modules.

As mentioned above, it is known that the local Galois representations
associated to finite slope overconvergent Hilbert eigenforms are cyclotomic
trianguline. It is then natural to use Theorem B to translate this
into a Lubin-Tate triangulinity result, provided that this local representation
is analytic. Furthermore, it is possible to explicitly determine this
triangulation, generalizing previous work of Chenevier and Colmez
(see Theorem 7.4 for a more precise statement). To state the result,
let $\rho_{f}$ be the $p$-adic representation of $\Gal(\overline{F}/F)$
associated to a finite slope overconvergent Hilbert eigenform of weights
$(k,1,...,1)$ at $v$ and determinant $\eta\chi_{\cyc}^{w-1}$ for
some potentially unramified character $\eta$. For the sake of simplifying
the introduction, assume here that the restriction of $\rho_{f}$
to a decomposition group $G_{F_{v}}=\Gal(\overline{F}_{v}/F_{v})$
has coefficients in $F_{v}$ and that $k,w\in\Z$. Removing these
assumptions only requires introducing more notation. To state the
result, choose a uniformizer $\pi_{v}$ of $F_{v}$, write $\chi_{\pi_{v}}$
for the corresponding Lubin-Tate character and let $a_{v}\in F_{v}^{\times}$
be such that $U_{v}f=a_{v}f$.  If $y\in F_{v}^{\times}$, write
$\mu_{y}:F_{v}^{\times}\rightarrow F_{v}^{\times}$ for the character
defined by $\mu_{y}(z)=y^{\val_{\pi_{v}}(z)}$. Let $x:F_{v}^{\times}\rightarrow F_{v}^{\times}$
be the character $x(z)=z$ and $x_{0}:F_{v}^{\times}\rightarrow F_{v}^{\times}$
be the character $x_{0}(z)=x/\mu_{\pi_{v}}$. We say that $f$ is
$F_{v}$-analytic up to a twist if the same holds for $\rho_{f}|_{G_{F_{v}}}$.\\

\textbf{Theorem C. }\emph{Suppose that $f$ is $F_{v}$-analytic up
to a twist. Then it is Lubin-Tate trianguline. If $\D_{\rig,F_{v}}^{\dagger}\left(\rho_{f}|_{G_{F_{v}}}^{\vee}\right)$
is the $\left(\varphi_{q},\Gamma_{F_{v}}\right)$-module over $\B_{\rig,F_{v}}^{\dagger}$
associated to $\rho_{f}|_{G_{F_{v}}}^{\vee}$, a triangulation is
given by the short exact sequence  
\[
0\rightarrow\B_{\rig,F_{v}}^{\dagger}\left(\delta_{1}\right)\rightarrow\D_{\rig,K}^{\dagger}\left(\rho_{f}|_{G_{F_{v}}}^{\vee}\right)\rightarrow\B_{\rig,F_{v}}^{\dagger}\left(\delta_{2}\right)\rightarrow0,
\]
where $\delta_{2}=\delta_{1}^{-1}\det(V)$ and $\delta_{1}:F_{v}^{\times}\rightarrow F_{v}^{\times}$
is a character. Here and $\delta_{1}$ and $\rho_{f}|_{G_{F_{v}}}$
satisfy the following. }
\begin{enumerate}
\item \emph{If $k\notin\Z_{\geq1}$ then $\delta_{1}=\mu_{a_{v}}x_{0}^{\frac{k-1}{2}}\left(\N_{F_{v}/\Q_{p}}\circ x_{0}\right)^{\frac{1-w}{2}}$
and $\rho_{f}|_{G_{F_{v}}}$ is irreducible and not Hodge-Tate.}
\item \emph{If $k\in\Z_{\geq1}$ and $\val_{\pi_{v}}(a_{v})<\frac{k-1}{2}+\frac{w-1}{2}\left[F_{v}:\Q_{p}\right]$,
then $\delta_{1}=\mu_{a_{v}}x_{0}^{\frac{k-1}{2}}\left(\N_{F_{v}/\Q_{p}}\circ x_{0}\right)^{\frac{1-w}{2}}$
and $\rho_{f}|_{G_{F_{v}}}$ is irreducible and potentially semistable.}
\item \emph{If $k\in\Z_{>1}$ and} $\val_{\pi_{v}}(a_{v})=\frac{k-1}{2}+\frac{w-1}{2}\left[F_{v}:\Q_{p}\right]$,
then either
\begin{enumerate}
\item $\delta_{1}=\mu_{a_{v}}x_{0}^{\frac{k-1}{2}}\left(\N_{F_{v}/\Q_{p}}\circ x_{0}\right)^{\frac{1-w}{2}}$
and \emph{$\rho_{f}|_{G_{F_{v}}}$ is reducible, nonsplit and potentially
ordinary, or}
\item $\delta_{1}=x^{1-k}\mu_{a_{v}}x_{0}^{\frac{k-1}{2}}\left(\N_{F_{v}/\Q_{p}}\circ x_{0}\right)^{\frac{1-w}{2}}$
and \emph{$\rho_{f}|_{G_{F_{v}}}$ is a sum of two characters and
potentially crystalline.}
\end{enumerate}
\item \emph{If $k\in\Z_{\geq1}$ and $\val_{\pi_{v}}(a_{v})>\frac{k-1}{2}+\frac{w-1}{2}\left[F_{v}:\Q_{p}\right]$,
then $\delta_{1}=x^{1-k}\mu_{a_{v}}x_{0}^{\frac{k-1}{2}}\left(\N_{F_{v}/\Q_{p}}\circ x_{0}\right)^{\frac{1-w}{2}}$
and $\rho_{f}|_{G_{F_{v}}}$ is irreducible, Hodge-Tate and not potentially
semistable.}\\
\end{enumerate}
The condition on the weights at $v$ to be of the form $(k,1,...,1)$
is necessary but not sufficient for $f$ to be $F_{v}$-analytic up
to a character twist. In fact, the computations of \cite[Proposition 2.10]{Na09}
and \cite[Theorem 0.3]{FX13} suggest that this stronger condition
of $F_{v}$-analyticity cuts out a locus of codimension $[F_{v}:\Q_{p}]-1$
inside the locus of weights $(k,1,...,1)$ at $v$ of the Hilbert
eigenvariety. However, under suitable local-global compatibility conjectures,
it contains all classical points of weights $(k,1,...,1)$. We believe
it might be possible to obtain a version of Theorem C for arbitrary
$f$ if one works with $\left(\varphi_{q},\Gamma_{F_{v}}\right)$-modules
over multivariable Robba rings as in \cite{Be13}.

Finally, we make some further speculations. For simplicity, assume
that $p$ is inert in $F$. The small slope condition $\val_{p}(a_{p})<\frac{k-1}{2}+\frac{w-1}{2}\left[F_{p}:\Q_{p}\right]$
in Theorem C agrees with the optimal bound in partial weight 1 conjectured
in an unpublished note of Breuil (see Proposition 4.3 of \cite{Br10}).
This suggests that $F_{p}$-analytic finite slope $p$-adic Hilbert
eigenforms of weights $(k,1,...,1)$ and $\val_{p}(a_{p})<\frac{k-1}{2}+\frac{w-1}{2}\left[F_{p}:\Q_{p}\right]$
should be classical. If such a classicality criterion were known,
an argument as in Theorem 6.6 of \cite{Ki03} using our Theorem 7.4
would verify the Fontaine-Mazur conjecture for representations which
arise from $F_{v}$-analytic finite slope $p$-adic Hilbert eigenforms.
Conversely, if the Fontaine-Mazur conjecture were known in our context
then Theorem 7.4 would imply such a classicaility criterion. See $\mathsection7.2$
for a more precise discussion.

In another direction, suppose again that $p$ is inert in $F$ and
that $V$ is a $p$-adic representation of $\Gal(\overline{F}/F)$
which is irreducible, totally odd, unramified at almost all primes
and Lubin-Tate trianguline at $p$. Recall again the theorem of Emerton
(Theorem 1.2.4 of \cite{Em11}) which asserts that if $F=\Q$ and
$\overline{V}$ satisfies certain technical conditions then $V$ is
the character twist of the Galois representation attached to an (elliptic)
overconvergent $p$-adic eigenform of finite slope. In light of Theorem
7.4, we ask the following.
\begin{question}
Is $V$ necessarily the character twist of a representation attached
to an $F_{p}$-analytic overconvergent $p$-adic Hilbert eigenform
of finite slope?
\end{question}

\subsection{Structure of the article}

$\mathsection2$ contains preliminaries regarding locally $K$-analytic
vectors. In $\mathsection3$ we define the functors $\D_{*,K}$ for
\emph{$*\in\left\{ \Sen,\dif,\dR,\cris,\st\right\} $} and prove their
basic properties. In $\mathsection4$ we study some big period rings
and their $K$-locally analytic vectors. Theorem A is proved in $\mathsection5$.
This proof involves reinterperting several constructions in $p$-adic
Hodge theory in terms of $K$-locally analytic vectors, as well as
some computations with rather large rings of periods, and so involves
most of the work done in $\mathsection\mathsection2$-5. In $\mathsection6$
we relate this to Lubin-Tate triangulinity and prove Theorem B. Finally,
in $\mathsection7$ we prove Theorem C, concluding with an example.

\subsection{Notations and conventions}

The field $K$ denotes a finite extension of $\Q_{p}$, with ring
of integers $\mathcal{O}_{K}$, uniformizer $\pi$, and residue field
$k$. The field $E$ is a finite extension of $\Q_{p}$ which contains
$K$. It will serve as a field of coefficients for the objects we
consider. The field $K_{0}=\mathrm{W}(k)[1/p]$ is the maximal unramified
subextension of $K$. Let $q=p^{f}$ be the cardinality of the residue
field and $e$ the absolute ramification index of $K$, so that $[K:\Q_{p}]=ef$.
We let $\Sigma_{K}$ denote the set of embeddings of $K$ into $\overline{\Q}_{p}$.

Denote by $G_{K}$ the absolute Galois group of $K$. If $\mathcal{F}$
is the formal Lubin-Tate group associated to $\pi$, then $K_{n}=K(\mathcal{F}[\pi^{n}])$
and $K_{\infty}=\cup_{n\geq1}K_{n}$ are abelian extensions of $K$
which depend only on $\pi$. The Lubin-Tate character $\chi_{\pi}:G_{K}\rightarrow\mathcal{O}_{K}^{\times}$
is the character given by the action of $G_{K}$ on $\mathcal{F}[\pi^{\infty}]$.
It induces an isomorphism of $\Gamma_{K}=\Gal(K_{\infty}/K)$ with
$\mathcal{O}_{K}^{\times}$. Its kernel is $H_{K}=\Gal(\overline{K}/K_{\infty})$,
and $G_{K}/H_{K}=\Gamma_{K}$. The cyclotomic character $\chi_{\mathrm{cyc}}$
of $G_{K}$ satisfes the relation $\mathrm{N}_{K/\Q_{p}}\circ\chi_{\pi}=\chi_{\mathrm{cyc}}\eta$
for an unramified character $\eta$.

An $E$-linear representation $V$ of $\Gal(\overline{K}/K)$ is called
$K$-analytic if $\C_{p}\otimes_{K}^{\tau}V$ is trivial for each
$\tau\in\Sigma_{K}\backslash\left\{ \Id\right\} $.

Finally, all characters and representations appearing in this article
are assumed to be continuous. We normalize the $p$-adic valuation
and $p$-adic logarithm so that $\val_{p}(p)=1$ and $\log(p)=0$.
The Hodge-Tate weight of $\chi_{\cyc}$ is $1$.

\subsection{Acknowledgments}

I am grateful to my advisor Matthew Emerton for suggesting I try to
prove Lubin-Tate triangulinity results for $p$-adic modular forms,
and for his support throughout the project. I would like to thank
Laurent Berger, Ehud de-Shalit, Yulia Kotelnikova, Hao Lee, and Alexander
Petrov for many useful comments, Eric Stubley for useful explanations
about partial weight 1 Hilbert modular forms, and Richard Moy and
Joel Specter for sharing with me their calculations of the coefficients
of the modular form appearing in $\mathsection7.3$. 

\section{Locally $K$-analytic and pro $K$-analytic vectors}

In this section we give reminders on locally analytic and pro analytic
vectors and gather a few results that will be used in $\mathsection3,\mathsection4$
and $\mathsection5$.

\subsection{Locally analytic and pro analytic vectors}

We briefly recall the treatment given in $\mathsection2$ of \cite{Be16}
and in $\mathsection2$ of \cite{BC16}. 

Let $W$ be a Banach $\Q_{p}$-linear representation of $\Gamma_{K}$.
Given an open subgroup $H$ of $\Gamma_{K}$ with coordinates $c_{1},...,c_{d}:H\xrightarrow{\sim}\Z_{p}^{d}$,
we have the subspace $W^{H-\an}$ of $H$-analytic vectors in $W$.
These are the elements $w\in W$ for which there exists a sequence
of vectors $\left\{ w_{k}\right\} _{k\in\bbN^{d}}$ with $w_{k}\rightarrow0$
and $g(w)=\sum_{k\in\bbN^{d}}c_{k}(g)^{k}w_{k}$ for all $g\in H$.
We write $W^{\la}=\cup_{H}W^{H-\an}$ for the subspace of locally
analytic vectors of $W$. If $W$ is a Fréchet space whose topology
is defined by a countable sequence of seminorms, let $W_{i}$ be the
Hausdorff completion of $W$ for the $i$'th norm, so that $W=\underset{\leftarrow}{\lim}W_{i}$
is a projective limit of Banach spaces. We write $W^{\pa}=\underset{\leftarrow}{\lim}W_{i}^{\la}$
for the subspace of pro analytic vectors. 

For $n\gg0$, we have an isomorphism $l:\Gamma_{K_{n}}\rightarrow\pi^{n}\mathcal{O}_{K}$,
given by $g\mapsto\log(\chi_{\pi}(g))$. We have the subspace $W^{\Gamma_{K_{n}}-\an,K-\la}$
of vectors which are $K$-analytic on $\Gamma_{K_{n}}$, i.e. such
that there exists a sequence $\left\{ w_{k}\right\} _{k\in\mathbb{N}}$
with $\pi^{nk}w_{k}\rightarrow0$ and $g(w)=\sum_{k\in\mathbb{N}^{d}}l(g)^{k}w_{k}$
for all $g\in\Gamma_{K_{n}}$. We write $W^{K-\la}=\cup_{n\gg0}W^{\Gamma_{K_{n}}-\an,K-\la}$
for the subspace of $K$-locally analytic vectors of $W$. If $W=\underset{\leftarrow}{\lim}W_{i}$
is a Fréchet space as above, we write $W^{K-\pa}=\underset{\leftarrow}{\lim}W_{i}^{K-\la}$
for the subspace of pro $K$-analytic vectors. We extend the definitions
of locally $K$-analytic vectors and pro $K$-analytic vectors to
LB and LF spaces (i.e. filtered colimits of Banach spaces and Fréchet
spaces) in the obvious way.

For each $\tau\in\Sigma_{K}$, there is a differential operator $\nabla_{\tau}\in K^{\Gal}\otimes_{\Q_{p}}\Lie(\Gamma_{K})$
(see $\mathsection2$ of \cite{Be16}) where $K^{\Gal}$ is the Galois
closure of $K$. It is defined in such a way that if $W$ is $K^{\Gal}$-linear,
then for $w\in W^{\la}$ and $g\in\Gamma_{K_{n}}$ with $n\gg0$ we
have $g(w)=\sum_{k\in\bbN^{\Sigma_{K}}}l(g)^{k}\frac{\nabla^{k}(w)}{k!}$,
where $l(g)^{k}=\prod_{\tau\in\Sigma_{K}}\tau(l(g))^{k_{\tau}}$ and
$\nabla^{k}(w)=\prod_{\tau\in\Sigma_{K}}\nabla_{\tau}(w)^{k_{\tau}}$.
In other words, we can think of $\tau\circ l$ as giving coordinates
for $\Gamma_{K}$ and $\nabla^{k}(w)$ as being an iterated directional
derivative of $w$. In particular, $W^{K-\la}$ is the subspace of
$W^{\la}$ where $\nabla_{\tau}=0$ for each $\tau\in\Sigma_{K}\backslash\left\{ \Id\right\} $.
On $W^{K-\la}$ (or on $W^{K-\pa}$ if $W$ is Fréchet) we write $\nabla=\nabla_{\Id}$
when there is no danger of confusion; it is given by the forumla
\[
\nabla(w)=\lim_{g\rightarrow1}\frac{g(w)-w}{\log(\chi_{\pi}(g))},
\]
and we have $\nabla(w)=\frac{\log(g)(w)}{\log(\chi_{\pi}(g))}$ when
$g$ is sufficiently close to $1$.

The next lemma is proved in the same way as Proposition 2.2 and Proposition
2.4 of \cite{Be16}.
\begin{lem}
Let $B$ be a Banach (resp. Fréchet) $\Gamma_{K}$-ring and let $W$
be a free $B$-module of finite rank, equipped with a compatible action
of $\Gamma_{K}$. If the $B$ module has a basis $w_{1},...,w_{d}$
in which the function $\Gamma_{K}\rightarrow\GL_{d}(B)\subset M_{d}(B),$
$g\mapsto\Mat(g)$ is locally $K$-analytic (resp. pro $K$-analytic),
then $W^{K-\la}=\oplus_{j=1}^{d}B^{K-\la}w_{i}$ (resp. $W^{K-\pa}=\oplus_{j=1}^{d}B^{K-\pa}w_{i}$).
\end{lem}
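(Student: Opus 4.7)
The plan is to mirror the proofs of Propositions 2.2 and 2.4 of \cite{Be16}, reducing both inclusions to direct manipulations with the matrix-valued function $\Mat(g)$. The proof splits into two steps: first showing that each basis vector $w_j$ itself lies in $W^{K-\la}$, and then inverting a matrix identity relating coordinates of an orbit.

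For the inclusion $\oplus_{j} B^{K-\la} w_{j} \subseteq W^{K-\la}$, the key observation is that by hypothesis there exist $n \gg 0$ and sequences $\{b_{ij,k}\}_{k \in \bbN} \subset B$ with $\pi^{nk} b_{ij,k} \to 0$ such that $\Mat(g)_{ij} = \sum_{k} l(g)^{k} b_{ij,k}$ on $\Gamma_{K_{n}}$, where $l(g) = \log \chi_{\pi}(g)$. Collecting terms in $g(w_{j}) = \sum_{i} \Mat(g)_{ij} w_{i}$ produces
\[
g(w_{j}) = \sum_{k} l(g)^{k} \Bigl( \sum_{i} b_{ij,k} w_{i} \Bigr),
\]
exhibiting $w_{j} \in W^{K-\la}$. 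Since $W^{K-\la}$ is a $B^{K-\la}$-submodule of $W$, this direction follows.

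For the reverse inclusion, I would take $w = \sum_{j} b_{j} w_{j} \in W^{K-\la}$, write the orbit expansion $g(w) = \sum_{i} \tilde{b}_{i}(g) w_{i}$ with $\tilde{b}_{i} \colon \Gamma_{K_{n}} \to B$ locally $K$-analytic, and compare with
\[
g(w) = \sum_{j} g(b_{j}) g(w_{j}) = \sum_{i} \Bigl( \sum_{j} \Mat(g)_{ij} \, g(b_{j}) \Bigr) w_{i}.
\]
Equating coefficients yields the matrix identity $\vec{\tilde{b}}(g) = \Mat(g) \cdot g(\vec{b})$, whence $g(\vec{b}) = \Mat(g)^{-1} \vec{\tilde{b}}(g)$. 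It remains to verify that $g \mapsto \Mat(g)^{-1}$ is locally $K$-analytic. This reduces via Cramer's rule to showing that $g \mapsto (\det \Mat(g))^{-1}$ is locally $K$-analytic, which in turn follows from the standard geometric-series expansion of the inverse of a unit in $B^{K-\la}$ applied to the unit $\det \Mat(g)$. Consequently each $b_{j} \in B^{K-\la}$.

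For the Fr\'echet statement, I would write $B = \underset{\leftarrow}{\lim} B_{i}$ and $W = \underset{\leftarrow}{\lim} W_{i}$ as projective limits of Banach $\Gamma_{K}$-modules, with the basis $w_{1}, \ldots, w_{d}$ inducing compatible bases of each $W_{i}$, apply the Banach version at each level, and pass to the limit using $W^{K-\pa} = \underset{\leftarrow}{\lim} W_{i}^{K-\la}$ and the analogous equality for $B$. The main obstacle I expect is keeping track of the radius of $K$-analyticity: the matrix inversion step may require shrinking $n$ so that the geometric expansion of $(\det \Mat(g))^{-1}$ converges on the same neighborhood of the identity where $\Mat(g)$ and $\vec{\tilde{b}}(g)$ are analytic. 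This is a purely technical adjustment that follows the template of \cite{Be16}; once it is handled the argument becomes purely formal.
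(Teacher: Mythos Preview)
Your proposal is correct and follows exactly the approach the paper indicates: the paper does not give an independent proof but simply says the lemma ``is proved in the same way as Proposition 2.2 and Proposition 2.4 of \cite{Be16},'' which is precisely the template you reproduce. Your handling of both inclusions, the matrix-inversion step via Cramer and a geometric series, and the passage to the Fr\'echet case by taking limits are all in line with that reference, and the caveat about shrinking the radius of analyticity is the one genuine bookkeeping point, which you identify correctly.
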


\subsection{Locally analytic vectors in $\widehat{K}_{\infty}$-semilinear representations }

Let $L$ be a finite extension of $K$, and write $\Gamma_{L}=\Gal(L_{\infty}/L)$
where $L_{\infty}=LK_{\infty}$. Recall that following result (Proposition
2.10 of \cite{Be16}):
\begin{prop}
$\widehat{L}_{\infty}^{K-\la}=L_{\infty}$. 
\end{prop}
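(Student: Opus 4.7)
The containment $L_\infty \subseteq \widehat{L}_\infty^{K-\la}$ is essentially automatic: any $x \in L_\infty$ lies in some finite layer $L_n$, which is fixed pointwise by the open subgroup $\Gamma_{L_n} \subseteq \Gamma_L$. The orbit map $g \mapsto g(x)$ on $\Gamma_{L_n}$ is then constant, hence trivially a $K$-analytic function on $\Gamma_{L_n}$.

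The nontrivial direction is to show any $K$-locally analytic vector in $\widehat{L}_\infty$ lies in $L_\infty$. The plan is to first reduce to the base case $L = K$ by a finite Galois descent argument. Since $L_\infty / K_\infty$ is finite of degree $d = [L_\infty : K_\infty]$, one can choose a basis $e_1,\dots,e_d$ of $L_\infty$ over $K_\infty$ (taken to lie in $L$, so that each $e_i$ is fixed by $\Gamma_L$) and decompose every $x \in \widehat{L}_\infty$ uniquely as $x = \sum_i a_i e_i$ with $a_i \in \widehat{K}_\infty$. Since $\Gamma_L$ fixes the $e_i$, the orbit map of $x$ is $K$-locally analytic if and only if the orbit map of each $a_i$ is. Thus $x \in \widehat{L}_\infty^{K-\la}$ iff each $a_i \in \widehat{K}_\infty^{K-\la}$, and the problem reduces to showing $\widehat{K}_\infty^{K-\la} = K_\infty$.

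For the base case, I would use the differential operators $\nabla_\tau$ for $\tau \in \Sigma_K$ recalled in the preceding subsection: a vector $x \in \widehat{K}_\infty^{\la}$ is $K$-locally analytic precisely when $\nabla_\tau(x) = 0$ for every $\tau \in \Sigma_K \setminus \{\Id\}$. In the Lubin-Tate setting the space $\widehat{K}_\infty^{\la}$ is genuinely larger than $K_\infty$ --- this is precisely why Lubin-Tate Sen theory is interesting --- but the extra locally analytic vectors are detected by the transverse operators $\nabla_\tau$ for $\tau \neq \Id$ being nonzero on them. Concretely, I would invoke (or rederive in the spirit of) the Berger--Colmez description of $\widehat{K}_\infty^{\la}$ in terms of Lubin-Tate Sen theory, and check that the joint kernel of the $\nabla_\tau$ for $\tau \neq \Id$, inside $\widehat{K}_\infty^{\la}$, collapses to $K_\infty$.

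The main obstacle is precisely this last identification. The Tate--Sen axioms fail for Lubin-Tate extensions when $K \neq \Q_p$, so one cannot produce normalized trace operators projecting $\widehat{K}_\infty$ onto $K_n$ in the cyclotomic manner; instead one is forced to work at the level of locally analytic vectors and their differential operators. Once this analytic input is in place, the Galois descent step reducing from $L$ to $K$ is essentially linear algebra.
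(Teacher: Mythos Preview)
The paper does not prove this proposition; it is simply recalled as Proposition~2.10 of Berger \cite{Be16}. There is therefore no argument in the paper to compare against.

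Your reduction from general $L$ to the case $L=K$ is correct: since $L_\infty/K_\infty$ is finite one may choose a $K_\infty$-basis $e_1,\dots,e_d$ of $L_\infty$ lying in $L$, hence fixed by $\Gamma_L$; then $\widehat{L}_\infty = \bigoplus_i \widehat{K}_\infty\, e_i$ as $\Gamma_L$-modules, and the decomposition respects $K$-local analyticity coefficientwise (the $\Gamma_L$-action on $\widehat{K}_\infty$ being through the open subgroup $\Gamma_L \hookrightarrow \Gamma_K$, so the notion of $K$-local analyticity is unchanged). This step is clean.

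For the base case $\widehat{K}_\infty^{K-\la} = K_\infty$, however, your proposal is not a proof but a program. You correctly locate the content in showing that $\bigcap_{\tau \neq \Id} \ker(\nabla_\tau)$ inside $\widehat{K}_\infty^{\la}$ collapses to $K_\infty$, and you rightly note that $\widehat{K}_\infty^{\la}$ is strictly larger than $K_\infty$ when $K \neq \Q_p$; but you then acknowledge this identification as ``the main obstacle'' and defer it to invoking or rederiving the Berger--Colmez description. That is exactly the nontrivial computation underlying Berger's Proposition~2.10, which depends on the explicit structure of $\widehat{K}_\infty^{\la}$ (built from elements like the $x_\tau$ of Proposition~2.5 together with Tate--Sen style input adapted to the Lubin--Tate tower). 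Since you ultimately lean on the same literature the paper cites, neither the paper nor your proposal supplies an independent proof of the substantive step.
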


The purpose of this subsection is to prove a similar descent result
for representations.
\begin{thm}
Let $W$ be a finite dimensional $\widehat{L}_{\infty}$-semilinear
representation of $\Gamma_{L}$. Then the natural map $\widehat{L}_{\infty}\otimes_{L_{\infty}}W^{K-\la}\rightarrow W$
is an isomorphism.
\end{thm}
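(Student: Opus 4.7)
The plan is to reduce the statement to the combination of Lemma 2.1 and Proposition 2.2 by producing a basis $w_{1},\ldots,w_{d}$ of $W$ over $\widehat{L}_{\infty}$ whose entries lie in $W^{K-\la}$ and in which the cocycle $g\mapsto\Mat(g)$ describing the $\Gamma_{L}$-action is itself $K$-locally analytic.

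To produce such a basis, I would adapt the Sen-style decompletion argument developed by Berger--Colmez in $\mathsection3$ of \cite{BC16}. Starting from an arbitrary $\widehat{L}_{\infty}$-basis of $W$, view the $\Gamma_{L}$-action as a continuous $1$-cocycle valued in $\GL_{d}(\widehat{L}_{\infty})$. For $n$ sufficiently large the restriction of this cocycle to $\Gamma_{L_{n}}$ is close enough to the identity to allow a convergent successive approximation scheme: at each stage one modifies the current basis by an element of $\GL_{d}(\widehat{L}_{\infty})$, chosen so as to push the resulting cocycle closer to being $K$-analytic on a smaller open subgroup. The Tate--Sen type estimates needed to drive this iteration are precisely those that Berger--Colmez establish for the action of $\Gamma_{L}$ on the $K$-locally analytic subspaces of $\widehat{L}_{\infty}$. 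Passing to the limit yields a basis $w_{1},\ldots,w_{d}\in W^{K-\la}$ whose transition cocycle is $K$-locally analytic.

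Once such a basis is in hand, Lemma 2.1 applied to the free $\widehat{L}_{\infty}$-module $\oplus_{i}\widehat{L}_{\infty}w_{i}=W$ gives
\[
W^{K-\la}=\bigoplus_{i=1}^{d}\widehat{L}_{\infty}^{K-\la}\cdot w_{i},
\]
and Proposition 2.2 identifies $\widehat{L}_{\infty}^{K-\la}$ with $L_{\infty}$. Hence $W^{K-\la}=\oplus_{i}L_{\infty}w_{i}$, and the natural map $\widehat{L}_{\infty}\otimes_{L_{\infty}}W^{K-\la}\rightarrow W$ becomes the identity on $\oplus_{i}\widehat{L}_{\infty}w_{i}=W$, hence is an isomorphism.

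The main obstacle is clearly the construction of the $K$-locally analytic basis. The cyclotomic Sen theory on its own would only produce a $\Q_{p}$-locally analytic basis; extracting a \emph{$K$-}locally analytic one requires the finer theory of \cite{BC16}, which controls how the derivations $\nabla_{\tau}$ for $\tau\in\Sigma_{K}\setminus\{\Id\}$ act on the completion $\widehat{L}_{\infty}$ and ensures that their simultaneous kernel is large enough to support a full basis. With that ingredient secured, the remainder of the argument is the essentially formal bookkeeping carried out in the previous paragraph.
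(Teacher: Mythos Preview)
Your overall strategy---produce a basis of $W$ lying in $W^{K-\la}$ and then invoke Lemma 2.1 and Proposition 2.2---would certainly finish the proof once such a basis is in hand. But the method you propose for \emph{finding} that basis is not the one the paper uses, and the step you correctly identify as the ``main obstacle'' is where your sketch has a real gap.

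The paper does not run a Tate--Sen successive approximation aimed directly at a $K$-analytic cocycle. Instead it (i) reduces by a Galois descent argument (Lemma 2.4) to the case where $L\supset K^{\Gal}$; (ii) imports as a black box the $\Q_p$-locally analytic statement $\widehat{L}_\infty\otimes_{\widehat{L}_\infty^{\la}}W^{\la}\cong W$ from \cite{BC16}; and then (iii) descends from $W^{\la}$ to $W^{K-\la}$ using explicit elements $x_\tau\in\widehat{L}_\infty$ (Proposition 2.5) satisfying $\nabla_\tau(x_\tau)=1$ and $\nabla_\sigma(x_\tau)=0$ for $\sigma\neq\tau$. These primitives let one expand any $z\in W^{\la}$ as a convergent Taylor-type series $z=\sum_i y_i\,(x-x_n)^i$ whose coefficients $y_i$ are shown by direct computation to lie in $W^{K-\la}$, giving surjectivity of $\widehat{L}_\infty^{\la}\otimes_{L_\infty}W^{K-\la}\to W^{\la}$. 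No iterative cocycle-improvement is carried out at the $K$-analytic level.

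This distinction is not cosmetic. The classical Tate--Sen iteration relies on normalized trace maps, and as the introduction of the paper emphasizes, the Tate--Sen axioms fail for Lubin--Tate extensions when $K\neq\Q_p$. The decompletion machinery in \cite{BC16} is designed to produce a $\Q_p$-analytic basis, not a $K$-analytic one; the passage to $K$-analyticity is achieved \emph{afterwards} via the $x_\tau$, not by refining the iteration. So your claim that the needed ``Tate--Sen type estimates\ldots are precisely those that Berger--Colmez establish'' is not supported: what you would need is an analogue of those estimates adapted to the $K$-analytic structure, and that is exactly what is unavailable. If you want to repair the argument along the paper's lines, the ingredients to supply are the reduction to $L\supset K^{\Gal}$, the construction of the $x_\tau$, and the explicit series formula for the $y_i$.
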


This is proved in $\mathsection4$ of \cite{BC16} under the assumption
that $K$ is Galois over $\Q_{p}$. Here we shall adapt the methods
of ibid. to get rid of this assumption. 

First, we reduce to the case where $L$ is Galois over $\Q_{p}$.
\begin{lem}
Suppose that Theorem 2.3 holds for $M=L^{\Gal}$, the Galois closure
of $L$ over $\Q_{p}$. Then Theorem 2.3 holds for $L$.
\end{lem}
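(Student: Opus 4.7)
The plan is to base change $W$ along the finite Galois extension $\widehat{M}_{\infty}/\widehat{L}_{\infty}$, apply the assumption for $M$, and then descend via Galois theory. A convenient preliminary observation is that $M_{\infty}=ML_{\infty}$, so $G:=\Gal(M_{\infty}/L_{\infty})$ is a finite group with fixed subfield $L_{\infty}$ (and fixed field $\widehat{L}_{\infty}$ inside $\widehat{M}_{\infty}$, by Ax--Sen--Tate). Moreover, $\Gamma_{M}$ embeds as an open subgroup of $\Gamma_{L}$ via restriction to $L_{\infty}$, and $\Gamma_{M}=\Gal(M_{\infty}/M)$ and $G=\Gal(M_{\infty}/L_{\infty})$ are both normal subgroups of $\Gal(M_{\infty}/L)$ with trivial intersection (since $ML_{\infty}=M_{\infty}$). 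Hence $G$ and $\Gamma_{M}$ commute elementwise.

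Given a finite-dimensional $\widehat{L}_{\infty}$-semilinear representation $W$ of $\Gamma_{L}$, I would first form $W_{M}:=\widehat{M}_{\infty}\otimes_{\widehat{L}_{\infty}}W$ and regard it as a finite-dimensional $\widehat{M}_{\infty}$-semilinear representation of $\Gamma_{M}$ via the inclusion $\Gamma_{M}\subset\Gamma_{L}$. Applying the hypothesis (Theorem 2.3 for $M$) to $W_{M}$ gives an isomorphism
\[
\widehat{M}_{\infty}\otimes_{M_{\infty}}W_{M}^{K-\la}\xrightarrow{\sim}W_{M}.
\]

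Next, I would endow both sides of this isomorphism with a compatible semilinear $G$-action: $G$ acts on $\widehat{M}_{\infty}$ extending its action on $M_{\infty}$, and trivially on $W$. Because $G$ commutes with $\Gamma_{M}$, this action preserves $W_{M}^{K-\la}$; and since $\widehat{M}_{\infty}^{K-\la}=M_{\infty}$ by Proposition 2.2 applied to $M$, the resulting $G$-action on $W_{M}^{K-\la}$ is $M_{\infty}$-semilinear. Taking $G$-invariants and using classical finite Galois descent for finite-dimensional $M_{\infty}$-vector spaces with semilinear $G$-action then yields
\[
\widehat{L}_{\infty}\otimes_{L_{\infty}}(W_{M}^{K-\la})^{G}\xrightarrow{\sim}W_{M}^{G}=W.
\]
It remains to identify $(W_{M}^{K-\la})^{G}$ with $W^{K-\la}$: the inclusion $W^{K-\la}\subseteq(W_{M}^{K-\la})^{G}$ is immediate (a $\Gamma_{L}$-$K$-analytic vector of $W$ is, a fortiori, $\Gamma_{M}$-$K$-analytic in $W_{M}$, and is $G$-fixed because $W$ is), while conversely any $G$-fixed vector in $W_{M}$ lies in $W$, and local $K$-analyticity over the open subgroup $\Gamma_{M}$ of $\Gamma_{L}$ is the same condition as local $K$-analyticity over $\Gamma_{L}$.

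The main technical point will be verifying that $G$ and $\Gamma_{M}$ commute and that the $G$-action on $W_{M}^{K-\la}$ is indeed $M_{\infty}$-semilinear, as this is what allows the use of ordinary finite Galois descent rather than some more elaborate noncommutative variant. Once this is in place, the rest of the argument is bookkeeping.
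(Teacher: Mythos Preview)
Your proposal is correct and follows essentially the same approach as the paper: base change to $\widehat{M}_{\infty}$, apply the hypothesis for $M$, then descend along the finite Galois extension $M_{\infty}/L_{\infty}$. Your explicit verification that $G=\Gal(M_{\infty}/L_{\infty})$ and $\Gamma_{M}$ commute elementwise (as normal subgroups of $\Gal(M_{\infty}/L)$ with trivial intersection) is a useful detail that the paper leaves implicit when it writes $(W_{M}^{K-\la})^{\Gal(M_{\infty}/L_{\infty})}$ without further comment.
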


\begin{proof}
Let $L$ be any finite extension of $K$ and let $W$ be a finite
dimensional $\widehat{L_{\infty}}$-semilinear representation of $\Gamma_{K}$.
Write $W_{M}=\widehat{M}_{\infty}\otimes_{\widehat{L}_{\infty}}W$,
so that $W_{M}$ is a finite dimensional $\widehat{M}_{\infty}$-semilinear
representation of $\Gamma_{M}$. Note that $W_{M}$ is actually endowed
with a semilinear $\Gal(M_{\infty}/L)$-action, which restricts to
a $\Gamma_{M}$ action. By the assumption, we have $\widehat{M}_{\infty}\otimes_{M_{\infty}}W_{M}^{K-\la}\cong W_{M}.$
On the other hand, the extension $\Gal(M_{\infty}/L_{\infty})$ is
finite, so we are in the setting for completed Galois descent (see
$\mathsection2.2$ of \cite{BC09}). We have 
\[
W^{K-\la}=W_{M}^{K-\la}\cap W=\left(W_{M}^{K-\la}\right)^{\Gal(M_{\infty}/L_{\infty})}
\]
which implies that $\widehat{M}_{\infty}\otimes_{M_{\infty}}W_{M}^{K-\la}\cong\widehat{M}_{\infty}\otimes_{L_{\infty}}W^{K-\la}$.
We then have the following chain of natural isomorphisms 

\[
\begin{aligned}\widehat{L}_{\infty}\otimes_{L_{\infty}}W^{K-\la} & \cong\left(\widehat{M}_{\infty}\otimes_{L_{\infty}}W^{K-\la}\right)^{\Gal(M_{\infty}/L_{\infty})}\\
 & \cong\left(\widehat{M}_{\infty}\otimes_{M_{\infty}}W_{M}^{K-\la}\right)^{\Gal(M_{\infty}/L_{\infty})}\\
 & \cong\left(W_{M}\right)^{\Gal(M_{\infty}/L_{\infty})}\\
 & \cong W
\end{aligned}
\]
whose composition is the natural map $\widehat{L}_{\infty}\otimes_{L_{\infty}}W^{K-\la}\rightarrow W$,
which proves the claim.
\end{proof}
\begin{prop}
If $\tau\in\Sigma_{K}\backslash\left\{ \mathrm{Id}\right\} $ and
$K^{\Gal}\subset L$, there exists an element $x_{\tau}\in\widehat{L}_{\infty}$
such that $g(x_{\tau})=x_{\tau}+\tau(l(g))$ for $g\in G_{K^{\Gal}}$.
In particular, $\nabla_{\tau}(x_{\tau})=1$ and $\nabla_{\sigma}(x_{\tau})=0$
for $\sigma\neq\tau$.
\end{prop}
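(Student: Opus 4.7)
The plan is to reduce the statement to constructing a multiplicative trivialization: find $y_\tau \in \widehat{L}_\infty^\times$ with $g(y_\tau) = \tau(\chi_\pi(g)) \cdot y_\tau$ for $g \in G_{K^{\Gal}}$, then set $x_\tau := \log y_\tau$. The identity $g(x_\tau) = x_\tau + \tau(l(g))$ then follows on any open subgroup where the $p$-adic logarithm converges, and extends by additivity of both sides as continuous maps in $g$. Once $y_\tau$ is found in $\C_p^\times$, its membership in $\widehat{L}_\infty = \C_p^{H_L}$ is automatic: since $H_L \subset H_K = \ker \chi_\pi$, the defining relation forces $g(y_\tau) = y_\tau$ for every $g \in H_L$, so $y_\tau$ is already $H_L$-invariant.

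To construct $y_\tau \in \C_p^\times$, I would use the $\tau$-twisted Lubin--Tate formal group $\mathcal{F}^\tau$, namely the formal $\mathcal{O}_{\tau(K)}$-module obtained by applying $\tau$ to the coefficients of $\mathcal{F}$, well defined because $\tau(K) \subset K^{\Gal} \subset L$. A compatible sequence of torsion points of $\mathcal{F}^\tau$ yields a period $u_\tau$ on which $g \in G_K$ acts by $[\tau(\chi_\pi(g))]_{\mathcal{F}^\tau}(\cdot)$, and the formal $\mathcal{F}^\tau$-logarithm $y_\tau := \log_{\mathcal{F}^\tau}(u_\tau)$ then satisfies $g(y_\tau) = \tau(\chi_\pi(g)) \cdot y_\tau$. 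The hypothesis $\tau \neq \mathrm{Id}$ ensures that $y_\tau$ actually lives in $\C_p$ (and not only in $\B_{\dR}^+$): viewed as a $\C_p^\times$-valued character, $\tau \circ \chi_\pi$ has vanishing Hodge--Tate weight at the identity embedding, as is visible from the norm relation $\prod_{\sigma \in \Sigma_K} \sigma(\chi_\pi) = \chi_{\cyc} \cdot \eta$, in which the cyclotomic weight is carried entirely by the factor corresponding to $\sigma = \mathrm{Id}$. For $\tau = \mathrm{Id}$ the analogous period has weight $1$ and lies in $\B_{\dR}^+ \setminus \C_p$, which is exactly why this case must be excluded.

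Given $x_\tau$, the ``in particular'' clause is formal. Using the expansion $g(w) = \sum_{k \in \bbN^{\Sigma_K}} l(g)^k \nabla^k(w)/k!$ valid for $K$-locally analytic $w$ and $g$ close to $1$, and writing $g(x_\tau) - x_\tau = \tau(l(g)) = \sum_{\sigma \in \Sigma_K} \delta_{\sigma,\tau} \cdot \sigma(l(g))$, one matches first-order terms to read off $\nabla_\tau(x_\tau) = 1$ and $\nabla_\sigma(x_\tau) = 0$ for $\sigma \neq \tau$; the $K$-local analyticity of $x_\tau$ on a suitable subgroup is visible from the linearity of $g \mapsto \tau(l(g))$ in the coordinates $\sigma \circ l$. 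The principal obstacle is the construction of $y_\tau$ in $\C_p$, i.e., verifying that the $\tau$-twisted Lubin--Tate logarithm really does land inside $\C_p$ rather than merely $\B_{\dR}^+$; everything else is formal manipulation of logarithms, continuity, and the derivative operators $\nabla_\sigma$.
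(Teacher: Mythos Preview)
Your strategy---find $y_\tau \in \C_p^\times$ with $g(y_\tau) = \tau(\chi_\pi(g))\, y_\tau$ and set $x_\tau = \log y_\tau$---is exactly the paper's proof: the paper writes $\xi_\tau = y_\tau^{-1}$, cites \S 3.2 of \cite{Fo09} for its existence, and observes as you do that the transformation law already forces $\xi_\tau \in \widehat{K^{\Gal} K_\infty} \subset \widehat{L}_\infty$.

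One point of phrasing to tighten: $\C_p$ is a quotient of $\B_{\dR}^+$, not a subring, so ``lives in $\C_p$ and not only in $\B_{\dR}^+$'' does not parse. What your Hodge--Tate weight argument actually shows is that the period $t_\tau := \log_{\mathcal{F}^\tau}(u_\tau) \in \B_{\dR}^+$ is a \emph{unit} when $\tau \neq \Id$, equivalently that $y_\tau := \theta(t_\tau) \in \C_p$ is nonzero; this is the correct formulation, and with it your sketch goes through. You should also check (via functoriality of local class field theory, or by choosing $u_\tau$ as the image of the $\mathcal{F}$-torsion tower under an extension of $\tau$ to $\overline{\Q}_p$) that the Galois action on $u_\tau$ is through $\tau \circ \chi_\pi$ restricted to $G_{K^{\Gal}}$, and not merely through the a priori different character $\chi_{\tau(\pi)}$.
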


\begin{proof}
By $\mathsection3.2$ of \cite{Fo09}, there exists an element $\xi_{\tau}\in\C_{p}^{\times}$
such that $\xi_{\tau}/g(\xi_{\tau})=\tau(\chi_{\pi}(g))$ for $g\in G_{K^{\Gal}}$.
This equation makes it clear that $\xi_{\tau}$ lies in the completion
of $K^{\Gal}K_{\infty}$, which is contained in $\widehat{L}_{\infty}$.
Now take $x_{\tau}=-\log\xi_{\tau}$.
\end{proof}
For each $n\geq1$ and for each $\tau\neq\Id$, choose $x_{\tau}$
as in Proposition 2.5 and let $x_{n,\tau}\in L_{\infty}$ be such
that $||x_{\tau}-x_{n,\tau}||\leq p^{-n}$. For $k\in\bbN^{\Sigma_{K}\backslash\left\{ \Id\right\} }$
we write $(x-x_{n})^{k}=\prod_{\tau\in\Sigma_{K}\backslash\left\{ \Id\right\} }(x_{\tau}-x_{n,\tau})^{k_{\tau}}$. 

\emph{Proof of Theorem 2.3. }By Lemma 2.4, we may assume $L$ is Galois
over $\Q_{p}$. Recall that by Theorem 1.7 of \cite{BC16}, the natural
map $\widehat{L}_{\infty}\otimes_{\widehat{L}_{\infty}^{\la}}W^{\la}\rightarrow W$
is an isomorphism. Therefore, it is enough to prove that the natural
map $\widehat{L}_{\infty}^{\la}\otimes_{L_{\infty}}W^{K-\la}\rightarrow W^{\la}$
is an isomorphism. To prove injectivity, take $\sum_{i=1}^{n}\alpha_{i}\otimes x_{i}\in\widehat{L}_{\infty}^{\la}\otimes_{L_{\infty}}W^{K-\la}$
of minimal length such that $\sum_{i=1}^{n}\alpha_{i}x_{i}=0$. We
may assume that $\alpha_{1}=1$. For each $\tau\neq\Id$, we have
$\nabla_{\tau}\left(\sum_{i=1}^{n}\alpha_{i}x_{i}\right)=\sum_{i=2}^{n}\nabla_{\tau}(\alpha_{i})x_{i}$,
so by minimality $\nabla_{\tau}(\alpha_{i})=0$ for all $i$. This
means that each $\alpha_{i}\in L_{\infty}$, so $\sum_{i=1}^{n}\alpha_{i}\otimes x_{i}=0$.

To prove surjectivity, we give a sketch, omitting all details of convergence;
these can be provided in exactly the same way as in $\mathsection4$
of \cite{BC16}. For each $z\in W^{\la}$, and for each $i\in\bbN^{\Sigma_{K}\backslash\left\{ \Id\right\} }$,
let
\[
y_{i}=\sum_{k\in\bbN^{\Sigma_{K}\backslash\left\{ \Id\right\} }}(-1)^{|k|}(x-x_{n})^{k}\frac{\nabla^{k+i}(z)}{(k+i)!}{k+i \choose k}.
\]
One can show that $y_{i}\in W^{\la}$. By Proposition 2.5, for each
$\tau\in\Sigma_{K}\backslash\left\{ \Id\right\} $ we have $\nabla_{\tau}\left((x-x_{n})^{k}\right)=k_{\tau}(x-x_{n})^{k-1_{\tau}}$,
where $1_{\tau}$ is the tuple $\left(k_{\sigma}\right)\in\bbN^{\Sigma_{K}}$
with $k_{\tau}=1$ and $k_{\sigma}=0$ for $\sigma\neq\tau$. By a
direct calculation this implies that $\nabla_{\tau}(y_{i})=0$, so
that $y_{i}\in W^{K-\la}$. Finally, the identity
\[
z=\sum_{i\in\bbN^{\Sigma_{K}\backslash\left\{ \Id\right\} }}y_{i}(x-x_{n})^{i}
\]
shows that $z\in\widehat{L}_{\infty}\otimes_{\widehat{L}_{\infty}^{\la}}W^{\la}$.$\hfill\ensuremath{\Box}$

\subsection{Pro analytic vectors in $\protect\B_{\protect\dR}$}

The ring $\B_{\dR}^{+}$ contains an element $t_{K}$ for which each
$g\in G_{K}$ acts by $g(t_{K})=\chi_{\pi}(g)t_{K}$. It differs from
the usual $t$ by a unit, but it has the advantage that it carries
an action of $\Gamma_{K}$, which is moreover $K$-analytic. As $\B_{\dR}^{+}/t_{K}\cong\C_{p}$,
the quotients $\left(\B_{\dR}^{+}/t_{K}^{l}\right)^{H_{K}}$ for $l\geq1$
are Banach $\Gamma_{K}$-rings. The ring and $\left(\B_{\dR}^{+}\right)^{H_{K}}$
is a Fréchet $\Gamma_{K}$-ring and $\left(\B_{\dR}\right)^{H_{K}}$
is an LF $\Gamma_{K}$-ring.
\begin{prop}
1.$\left(\B_{\dR}^{+}/t_{K}^{l}\right)^{H_{K},K-\la}=K_{\infty}[t_{K}]/t_{K}^{l}$.

2. $\left(\B_{\dR}^{+}\right)^{H_{K},K-\pa}=K_{\infty}[[t_{K}]]$.

3. $\left(\B_{\dR}\right)^{H_{K},K-\pa}=K_{\infty}\left(\left(t_{K}\right)\right)$.
\end{prop}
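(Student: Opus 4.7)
The plan is to prove (1) by induction on $l$, deducing (2) and (3) formally by passing to projective and inductive limits. The base case $l = 1$ is exactly Proposition 2.2, since $\B_{\dR}^{+}/t_{K} \cong \C_{p}$ as $G_{K}$-modules, so $(\B_{\dR}^{+}/t_{K})^{H_{K}} = \widehat{K}_{\infty}$ and $(\widehat{K}_{\infty})^{K-\la} = K_{\infty}$. The easy inclusion $K_{\infty}[t_{K}]/t_{K}^{l} \subset (\B_{\dR}^{+}/t_{K}^{l})^{H_{K},K-\la}$ for general $l$ follows because elements of $K_{\infty}$ are fixed by $\Gamma_{K_{n}}$ for some $n$ (hence trivially $K$-analytic), while $t_{K}$ is $K$-analytic since for $g$ near the identity
\[
g(t_{K}) = \chi_{\pi}(g)\, t_{K} = \sum_{k \geq 0} \frac{l(g)^{k}}{k!}\, t_{K}
\]
is a convergent $K$-analytic expansion (using crucially that $\chi_{\pi}$ takes values in $\mathcal{O}_{K}^{\times}$, a feature specific to Lubin-Tate extensions).

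For the reverse inclusion, take $v \in (\B_{\dR}^{+}/t_{K}^{l})^{H_{K},K-\la}$. By the inductive hypothesis its image $\bar{v}$ in $\B_{\dR}^{+}/t_{K}^{l-1}$ equals $\sum_{i=0}^{l-2} a_{i}\, \bar{t}_{K}^{\,i}$ for some $a_{i} \in K_{\infty}$. Lifting termwise gives $w = \sum_{i=0}^{l-2} a_{i} t_{K}^{i} \in K_{\infty}[t_{K}]/t_{K}^{l}$, which is already $H_{K}$-invariant and $K$-analytic by the easy inclusion. Hence $v - w$ lies in the $K$-locally analytic, $H_{K}$-invariant part of $t_{K}^{l-1}\B_{\dR}^{+}/t_{K}^{l}\B_{\dR}^{+}$. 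Multiplication by $t_{K}^{l-1}$ identifies this submodule with the Galois twist $\C_{p}(\chi_{\pi}^{l-1})$, whose $H_{K}$-invariants are $\widehat{K}_{\infty} \cdot t_{K}^{l-1}$. For $a \in \widehat{K}_{\infty}$ one has $g(a t_{K}^{l-1}) = \chi_{\pi}(g)^{l-1} g(a)\, t_{K}^{l-1}$, so $a t_{K}^{l-1}$ is $K$-locally analytic iff $g \mapsto \chi_{\pi}(g)^{l-1} g(a)$ is, iff $g \mapsto g(a)$ is (multiplication by the $K$-analytic, $K^{\times}$-valued function $\chi_{\pi}^{l-1}$ preserves $K$-analyticity in both directions). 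A final application of Proposition 2.2 yields $a \in K_{\infty}$, so $v - w \in K_{\infty} \cdot t_{K}^{l-1}$, completing the induction.

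Part (2) follows by passing to the Fr\'echet projective limit:
\[
(\B_{\dR}^{+})^{H_{K},K-\pa} = \varprojlim_{l} (\B_{\dR}^{+}/t_{K}^{l})^{H_{K},K-\la} = \varprojlim_{l} K_{\infty}[t_{K}]/t_{K}^{l} = K_{\infty}[[t_{K}]].
\]
Part (3) then follows from (2) via the LF-decomposition $\B_{\dR} = \bigcup_{l \geq 0} t_{K}^{-l} \B_{\dR}^{+}$, using that multiplication by $t_{K}^{\pm 1}$ preserves $K$-pro-analyticity (it differs from a $G_{K}$-equivariant map by a $K$-analytic character). I expect the inductive step in (1) to be the main obstacle: taking $K$-locally analytic vectors is not an exact functor, so one cannot argue by pure diagram-chasing on the sequence $0 \to t_{K}^{l-1}(\B_{\dR}^{+}/t_{K}^{l}) \to \B_{\dR}^{+}/t_{K}^{l} \to \B_{\dR}^{+}/t_{K}^{l-1} \to 0$. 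The argument succeeds because $t_{K}$ already lies inside the $K$-analytic subring, permitting the explicit lift $w$ above and reducing the remaining piece to the clean twist calculation for $\widehat{K}_{\infty}(\chi_{\pi}^{l-1})$.
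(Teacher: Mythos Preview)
Your proof is correct and takes essentially the same approach as the paper: induction on $l$ with base case Proposition~2.2, using the short exact sequence $0 \to \C_{p}(\chi_{\pi}^{l-1}) \to \B_{\dR}^{+}/t_{K}^{l} \to \B_{\dR}^{+}/t_{K}^{l-1} \to 0$ to reduce the inductive step to computing the $K$-locally analytic vectors of $\widehat{K}_{\infty}(\chi_{\pi}^{l-1})$, then deducing (2) and (3) formally by passing to limits. The only cosmetic difference is that the paper packages the inductive step as a dimension count---left exactness of $(\,\cdot\,)^{H_{K},K-\la}$ gives $\dim_{K_{\infty}}(\B_{\dR}^{+}/t_{K}^{l})^{H_{K},K-\la} \leq l$, forcing the easy inclusion to be an equality---whereas you carry out the equivalent element chase explicitly.
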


\begin{proof}
(3) follows from (2) and (2) follows from (1). To prove (1), we argue
by induction. For $l=1$, this is Proposition 2.2. For $l\geq2$,
we have a short exact sequence
\[
0\rightarrow\C_{p}(l-1)\rightarrow\B_{\dR}^{+}/t_{K}^{l}\rightarrow\B_{\dR}^{+}/t_{K}^{l-1}\rightarrow0.
\]
Taking $H_{K}$ invariants and $K$-locally analytic vectors is left
exact, so we have
\[
0\rightarrow K_{\infty}(l-1)\rightarrow\left(\B_{\dR}^{+}/t_{K}^{l}\right)^{H_{K},K-\la}\rightarrow\left(\B_{\dR}^{+}/t_{K}^{l-1}\right)^{H_{K},K-\la}=K_{\infty}[t_{K}]/t_{K}^{l-1}.
\]
This shows that $\dim_{K_{\infty}}\left(\B_{\dR}^{+}/t_{K}^{l}\right)^{H_{K},K-\la}\leq l$,
so the containment $K_{\infty}[t_{K}]/t_{K}^{l}\subset\left(\B_{\dR}^{+}/t_{K}^{l}\right)^{H_{K},K-\la}$
has to be an equality, concluding the proof.
\end{proof}

\section{Lubin-Tate $p$-adic Hodge theory}

To goal of this section is to provide constructions and properties
of several of Fontaine's functors on $p$-adic representations where
$\Q_{p}$-coefficients are systematically replaced by $K$-coefficients.
Recall from $\mathsection1.2$ that $E$ is a finite extension of
$K$. Throughout, we fix an $E$-linear $G_{K}$-representation $V$
of dimension $d$ over $K$.

\subsection{The modules $\protect\D_{\protect\Sen,K}$ and $\protect\D_{\protect\dif,K}$}

When $K=\Q_{p}$, the modules $\D_{\Sen,K}$ and $\D_{\dif,K}$ can
be defined using the method of Sen (see $\mathsection4$ of \cite{BC08}).
It is unavailable for $K\neq\Q_{p}$, so we make use of locally analytic
and pro analytic vectors instead.

We set $W_{+,l}=\left(\B_{\dR}^{+}/t_{K}^{l}\otimes_{K}V\right)^{H_{K}}$
for $l\geq1$, $W_{+}=\left(\B_{\dR}^{+}\otimes_{K}V\right)^{H_{K}}$
and $W=\left(\B_{\dR}\otimes_{K}V\right)^{H_{K}}$. By Proposition
2.6, we have $K_{\infty}[t_{K}]/t_{K}^{l}$-submodules $\D_{\dif,K}^{+,l}(V)=W_{+,l}^{K-\la}$
for $l\geq1$, a $K_{\infty}[[t_{K}]]$-submodule $\D_{\dif,K}^{+}(V)=W_{+}^{K-\pa}$
and a $K_{\infty}\left(\left(t_{K}\right)\right)$-vector space $\D_{\dif,K}(V)=W^{K-\pa}$.
The subspace $\D_{\dif,K}^{+,1}(V)$ is also called $\D_{\Sen,K}(V)$,
and was already constructed in \cite{BC16}.
\begin{lem}
The natural map $\B_{\dR}^{+}/t_{K}^{l}\otimes_{K_{\infty}[t_{K}]/t_{K}^{l}}W_{+,l}\rightarrow\B_{\dR}^{+}/t_{K}^{l}\otimes_{K}V$
is an isomorphism. 
\end{lem}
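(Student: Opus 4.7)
The plan is to proceed by induction on $l$, using the short exact sequence of $G_K$-modules
\[
0\to\C_{p}(l-1)\to\B_{\dR}^{+}/t_{K}^{l}\to\B_{\dR}^{+}/t_{K}^{l-1}\to0,
\]
where the first map sends $a\mapsto a\,t_{K}^{l-1}$. Tensoring with $V$ over $K$ and taking $H_{K}$-invariants produces a left-exact sequence
\[
0\to W_{+,1}(l-1)\to W_{+,l}\to W_{+,l-1},
\]
and the goal becomes a $5$-lemma comparison with the original short exact sequence after base change to $\B_{\dR}^{+}/t_{K}^{l}$.

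The base case $l=1$ is the statement that the natural map $\C_{p}\otimes_{\widehat{K}_{\infty}}W_{+,1}\to\C_{p}\otimes_{K}V$ is an isomorphism; equivalently, that every continuous $\C_{p}$-semilinear representation of $G_{K}$ descends along $\C_{p}/\widehat{K}_{\infty}$. This is the classical Ax--Sen--Tate/Hilbert~90 statement for $\C_{p}$-representations; in the Lubin--Tate setting it is implicit in the generalized Tate--Sen formalism developed by Berger--Colmez in \cite{BC16}, and it is also the content of the $l=1$ instance of the construction of $\D_{\Sen,K}(V)=\D_{\dif,K}^{+,1}(V)$ recalled at the start of the subsection.

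For the inductive step, the base case applied to the twist $V(l-1)$ gives the isomorphism on the left-hand subobject, and the inductive hypothesis for $l-1$ gives it on the quotient. The natural map we want fits into a commutative diagram
\[
\begin{CD}
\B_{\dR}^{+}/t_{K}^{l}\otimes W_{+,1}(l-1) @>>> \B_{\dR}^{+}/t_{K}^{l}\otimes W_{+,l} @>>> \B_{\dR}^{+}/t_{K}^{l-1}\otimes W_{+,l-1} \\
@VVV @VVV @VVV \\
\C_{p}\otimes V(l-1) @>>> \B_{\dR}^{+}/t_{K}^{l}\otimes V @>>> \B_{\dR}^{+}/t_{K}^{l-1}\otimes V
\end{CD}
\]
(with tensor products formed over the appropriate ring on each row), so the $5$-lemma concludes as soon as the top row is short exact.

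The main obstacle is therefore right-exactness of the top row, which amounts to surjectivity of $W_{+,l}\to W_{+,l-1}$ after base change, i.e.\ the vanishing (up to base change) of the connecting map into $H^{1}_{\mathrm{cts}}(H_{K},\C_{p}\otimes V(l-1))$. In the cyclotomic setting this is handled by the Tate--Sen axioms; in the Lubin--Tate setting the corresponding axioms fail, but one can instead lift elements directly using the generalized Sen-style decompositions of $\widehat{L}_{\infty}$-semilinear representations provided by Theorem~2.3 together with Proposition~2.6, which together ensure that any element of $W_{+,l-1}$ admits a lift to $W_{+,l}$ after extending scalars to $\B_{\dR}^{+}/t_{K}^{l}$. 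This completes the inductive step.
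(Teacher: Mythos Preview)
Your inductive strategy via the filtration on $\B_{\dR}^{+}/t_{K}^{l}$ is the same as the paper's, but the final paragraph contains a real misconception that leaves the argument incomplete.

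The surjectivity of $W_{+,l}\to W_{+,l-1}$ is controlled by $\H^{1}_{\mathrm{cts}}(H_{K},\C_{p}\otimes_{K}V)$, as you say. But this is an $H_{K}$-statement, not a $\Gamma_{K}$-statement, and the failure of the Tate--Sen axioms in the Lubin--Tate setting is entirely a $\Gamma_{K}$ phenomenon; it is irrelevant here. The extension $\overline{K}/K_{\infty}$ is deeply ramified (Lubin--Tate extensions are strictly APF), so almost \'etale descent applies exactly as in the cyclotomic case: one has $\H^{1}(H_{K},\C_{p})=0$, and since the base case $l=1$ already tells you that $\C_{p}\otimes_{K}V\cong\C_{p}^{d}$ as semilinear $H_{K}$-modules, it follows that $\H^{1}(H_{K},\C_{p}\otimes_{K}V)=0$. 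That is the whole argument for surjectivity. Your appeal to Theorem~2.3 and Proposition~2.6 is misplaced: both of those results concern $\Gamma_{K}$-locally analytic vectors in $\widehat{K}_{\infty}$-semilinear or $(\B_{\dR}^{+})^{H_{K}}$-modules, and say nothing about lifting along $H_{K}$. (The same confusion appears in your base case, where you cite \cite{BC16}; the $l=1$ descent from $\C_{p}$ to $\widehat{K}_{\infty}$ is again almost \'etale descent for $H_{K}$, not the Sen-style $\Gamma_{K}$-descent of Berger--Colmez.)

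For comparison, the paper runs the same induction but packages it as the vanishing of nonabelian cohomology: it shows $\H^{1}(H_{K},\GL_{d}(\B_{\dR}^{+}/t_{K}^{l}))=1$ by d\'evissage through the short exact sequence of groups
\[
1\to I+t_{K}^{l-1}\mathrm{M}_{d}(\B_{\dR}^{+}/t_{K}^{l})\to\GL_{d}(\B_{\dR}^{+}/t_{K}^{l})\to\GL_{d}(\B_{\dR}^{+}/t_{K}^{l-1})\to1,
\]
using that the kernel is isomorphic to $\mathrm{M}_{d}(\C_{p})$ as an $H_{K}$-module and hence has trivial $\H^{1}$. This is cleaner than the 5-lemma version because it avoids having to keep track of the module structures on each $W_{+,l}$, but the essential input --- vanishing of $\H^{1}(H_{K},\C_{p})$ via almost \'etale descent --- is identical.
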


\begin{proof}
It suffices to prove that $\H^{1}\left(H_{K},\GL_{d}\left(\B_{\dR}^{+}/t_{K}^{l}\right)\right)=1$.
When $l=1$ this is true by almost étale descent. For $l\geq2$, we
have a short exact sequence
\[
1\rightarrow I+t_{K}^{l-1}\mathrm{M}_{d}\left(\B_{\dR}^{+}/t_{K}^{l}\right)\rightarrow\GL_{d}\left(\B_{\dR}^{+}/t_{K}^{l}\right)\rightarrow\GL_{d}\left(\B_{\dR}^{+}/t_{K}^{l-1}\right)\rightarrow1.
\]
As $I+t_{K}^{l-1}\mathrm{M}_{d}\left(\B_{\dR}^{+}/t_{K}^{l}\right)\cong\mathrm{M}_{d}\left(\C_{p}(l-1)\right)$,
the group $\H^{1}\left(H_{K},I+t_{K}^{l-1}\mathrm{M}_{d}\left(\B_{\dR}^{+}/t_{K}^{l}\right)\right)$
is trivial, and we conclude by induction.
\end{proof}
We will also need the following.
\begin{lem}
Let $w\in W_{+,l}$ and suppose that $t_{K}\cdot w\in\D_{\dif,K}^{+,l}(V)$.
Then $w\in\D_{\dif,K}^{+,l}(V)$.
\end{lem}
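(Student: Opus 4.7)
The plan is to exploit the transformation law $g(t_K w) = \chi_\pi(g)\, t_K\, g(w)$. Since $\chi_\pi(g) = \exp(l(g))$ is itself a $K$-analytic function of $l(g)$, the hypothesis $t_K w \in W_{+,l}^{K-\la}$ directly yields that the $W_{+,l}$-valued map $g \mapsto t_K g(w) = \chi_\pi(g)^{-1} g(t_K w)$ is locally $K$-analytic on some $\Gamma_{K_n}$, giving an expansion
\[
t_K\, g(w) \;=\; \sum_{k\ge 0} l(g)^k\, b_k, \qquad b_k \in W_{+,l},\ b_0 = t_K w.
\]

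First I would show that each $b_k$ lies in $t_K\, W_{+,l}$, so that $b_k = t_K c_k$ for some $c_k \in W_{+,l}$, with the natural choice $c_0 = w$. Using the twisted Leibniz identity $\nabla(t_K x) = t_K(x + \nabla(x))$---which makes sense verbatim on $W_{+,l}^{K-\la}$ and is interpreted modulo $\ker(t_K)$ in general---one inductively obtains $\nabla^j(t_K w) \in t_K\,W_{+,l}$ for all $j \ge 1$. Since multiplication by the analytic factor $\exp(-l(g))$ preserves $t_K$-divisibility, each Taylor coefficient $b_k$ factors through $t_K$.

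Next, setting $\phi(g) := \sum_k l(g)^k c_k$, one gets a locally $K$-analytic $\phi$ with $t_K \phi(g) = t_K g(w)$, so $\psi(g) := g(w) - \phi(g)$ takes values in $\ker(t_K\colon W_{+,l}\to W_{+,l}) = (\C_p(l-1)\otimes_K V)^{H_K}$. The heart of the proof is to choose the $c_k$'s so that $\psi \equiv 0$. I would proceed by induction on $l$ using the short exact sequence
\[
0 \to (\C_p(l-1)\otimes_K V)^{H_K} \to W_{+,l} \to W_{+,l-1} \to 0
\]
(arising exactly as in the proof of Lemma 3.1). Applied to the image $\bar w$ of $w$ in $W_{+,l-1}$, the inductive hypothesis yields $\bar w \in \D_{\dif,K}^{+,l-1}(V)$; the residual question becomes one about $\psi$ taking values in the kernel, which is a finite-dimensional $\widehat{K}_\infty$-semilinear $\Gamma_K$-representation. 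Applying Theorem 2.3 to this kernel shows that its $K$-locally analytic vectors generate it over $\widehat{K}_\infty$, giving enough flexibility in the choice of lifts $c_k$ to absorb $\psi$.

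The main obstacle is precisely this last step: controlling the ambiguity in the $c_k$'s modulo $\ker(t_K)$, and ensuring that the freedom provided by Theorem 2.3 is compatible with the orbit-map structure $\phi(g) = g(w)$ across all $g \in \Gamma_{K_n}$ simultaneously, rather than just pointwise. This is where the $\widehat{K}_\infty$-density result of Theorem 2.3 does the real work.
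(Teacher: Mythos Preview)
Your instinct to worry about the kernel of multiplication by $t_K$ on $W_{+,l}$ is exactly right, and this is where the argument breaks. The paper's own proof glosses over the same point: from $g(t_K w) = \sum_{k\ge 0} l(g)^k\, t_K w_k$ and $g(t_K w) = \chi_\pi(g)\,t_K\,g(w)$ it concludes $g(w) = \chi_\pi(g)^{-1}\sum_k l(g)^k w_k$, silently cancelling $t_K$. But multiplication by $t_K$ on $W_{+,l}$ has nontrivial kernel whenever $l \ge 2$, so this step is not justified.

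In fact the lemma as stated is \emph{false}, so your proposed fix via Theorem~2.3 cannot succeed either. Take $V = K$, $l = 2$, and $w = c\,t_K \in W_{+,2}$ for some $c \in \widehat{K}_\infty \setminus K_\infty$ (such $c$ exist by Proposition~2.2). Then $t_K w = c\,t_K^2 = 0$ in $W_{+,2}$, so trivially $t_K w \in \D_{\dif,K}^{+,2}(K) = K_\infty[t_K]/t_K^2$; yet $w = c\,t_K \notin K_\infty[t_K]/t_K^2$. Theorem~2.3 only tells you that the $K$-locally analytic vectors in $\ker(t_K)$ are $\widehat{K}_\infty$-dense; it cannot force the specific defect $g(w) - \phi(g)$ to vanish, since $w$ itself is fixed and you are not free to alter it. What the application in Proposition~3.3 actually requires is the weaker assertion that \emph{some} preimage under $t_K$ is locally $K$-analytic --- equivalently, that $W_{+,l}^{K-\la} \to W_{+,l-1}^{K-\la}$ is surjective --- and this does follow from Theorem~2.3 applied directly to $W_{+,l}$, together with faithful flatness of $\widehat{K}_\infty$ over $K_\infty$.
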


\begin{proof}
Since $\nabla(t_{K}w)=t_{K}(w+\nabla(w))$, we have that $\frac{\nabla^{k}(t_{K}w)}{k!}$
is divisible by $t_{K}$ for $k\geq1$. If $t_{K}\cdot w\in\D_{\dif,K}^{+,l}(V)$,
there exists an $n\gg0$ such that for $g\in\Gamma_{n}$, we have
$g(t_{K}w)=\sum_{k\geq0}l(g)^{k}t_{K}w_{k}$, where $w_{k}=t_{K}^{-1}\frac{\nabla^{k}(t_{K}w)}{k!}$.
Therefore, $g(w)=\chi_{\pi}(g^{-1})\sum_{k\geq0}l(g)^{k}w_{k}$, so
$w$ is locally $K$-analytic. 
\end{proof}
\begin{prop}
1. The natural map $\B_{\dR}^{+}/t_{K}^{l}\otimes_{K_{\infty}[t_{K}]/t_{K}^{l}}\D_{\dif,K}^{+,l}(V)\rightarrow\B_{\dR}^{+}/t_{K}^{l}\otimes_{K}V$
is an isomorphism, and $\D_{\dif,K}^{+,l}(V)$ is a free $K_{\infty}[t_{K}]/t_{K}^{l}$-module
of rank $d$.

2. The natural map $\B_{\dR}^{+}\otimes_{K_{\infty}[[t_{K}]]}\D_{\dif,K}^{+}(V)\rightarrow\B_{\dR}^{+}\otimes_{K}V$
is an isomorphism, and $\D_{\dif,K}^{+}(V)$ is a free $K_{\infty}[[t_{K}]]$-module
of rank $d$.

3. The natural map $\B_{\dR}\otimes_{K_{\infty}\left(\left(t_{K}\right)\right)}\D_{\dif,K}(V)\rightarrow\B_{\dR}\otimes_{K}V$
is an isomorphism, and $\D_{\dif,K}(V)$ is a $K_{\infty}\left(\left(t_{K}\right)\right)$-vector
space of dimension $d$.
\end{prop}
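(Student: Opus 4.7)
The plan is to prove (1) by induction on $l$, then deduce (2) by passing to the inverse limit and (3) by inverting $t_{K}$.

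For the base case $l = 1$, the module $W_{+,1} = (\C_{p} \otimes_{K} V)^{H_{K}}$ is a finite-dimensional $\widehat{K}_{\infty}$-semilinear representation of $\Gamma_{K}$, with $\widehat{K}_{\infty}$-dimension equal to $d$ by Lemma 3.1 at $l=1$. Applying Theorem 2.3 with $L = K$ gives that $\D_{\Sen,K}(V) = W_{+,1}^{K-\la}$ is free of rank $d$ over $K_{\infty}$ and that $\widehat{K}_{\infty} \otimes_{K_{\infty}} \D_{\Sen,K}(V) \xrightarrow{\sim} W_{+,1}$. Combined with Lemma 3.1 at $l=1$, this handles the case $l=1$.

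For the inductive step, I would start from the $H_{K}$-equivariant short exact sequence
\[
0 \to \C_{p}(l-1) \otimes_{K} V \xrightarrow{t_{K}^{l-1}} \B_{\dR}^{+}/t_{K}^{l} \otimes_{K} V \to \B_{\dR}^{+}/t_{K}^{l-1} \otimes_{K} V \to 0
\]
and pass to $H_{K}$-invariants. The relevant $\H^{1}$ vanishes by the almost-\'etale argument used in the proof of Lemma 3.1, yielding $0 \to W_{+,1}(l-1) \to W_{+,l} \to W_{+,l-1} \to 0$. Taking $K$-locally analytic vectors then produces
\[
0 \to \D_{\Sen,K}(V)(l-1) \to \D_{\dif,K}^{+,l}(V) \to \D_{\dif,K}^{+,l-1}(V),
\]
whose outer terms are free of ranks $d$ over $K_{\infty}$ and $K_{\infty}[t_{K}]/t_{K}^{l-1}$ by the base case and the inductive hypothesis. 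Once surjectivity on the right is established, freeness of $\D_{\dif,K}^{+,l}(V)$ of rank $d$ over $K_{\infty}[t_{K}]/t_{K}^{l}$ follows, and the tensor product isomorphism is then deduced by comparison with Lemma 3.1 at level $l$.

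The main obstacle will be this surjectivity, since $W \mapsto W^{K-\la}$ is only left exact in general. Given $\bar{w} \in \D_{\dif,K}^{+,l-1}(V)$ and any lift $w \in W_{+,l}$, the derivatives $\nabla_{\tau}(w)$ for $\tau \neq \Id$ vanish modulo $t_{K}^{l-1}$ and hence lie in $t_{K}^{l-1} W_{+,l} \cong W_{+,1}(l-1)$. Using Lemma 3.2 together with the description of $(\B_{\dR}^{+}/t_{K}^{l})^{H_{K}, K-\la}$ from Proposition 2.6, I would construct a correction $w' \in W_{+,1}(l-1)$ so that $w - w'$ is $K$-locally analytic and lifts $\bar{w}$. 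Finally, for (2), the transition maps on $\D_{\dif,K}^{+,l}(V)$ are surjective with free kernels, so passage to the inverse limit gives that $\D_{\dif,K}^{+}(V) = \underset{\leftarrow}{\lim}\, \D_{\dif,K}^{+,l}(V)$ is free of rank $d$ over $K_{\infty}[[t_{K}]]$ and inherits the desired tensor product isomorphism; (3) is immediate upon inverting $t_{K}$ in (2).
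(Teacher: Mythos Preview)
Your overall architecture---induction on $l$ for (1), inverse limits for (2), inverting $t_K$ for (3)---matches the paper, and the base case via Theorem~2.3 and Lemma~3.1 is exactly right.

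The divergence is in the inductive step, and your proposed correction argument has a real gap. First, the operators $\nabla_\tau$ are only defined on locally analytic vectors, so you cannot apply them to an arbitrary lift $w\in W_{+,l}$; you would need at least a $\Q_p$-locally analytic lift, which requires invoking the Berger--Colmez result (Theorem~1.7 of \cite{BC16}) and checking that such lifts surject. Second, even granting this, producing $w'\in W_{+,1}(l-1)$ with $\nabla_\tau(w-w')=0$ for all $\tau\neq\Id$ simultaneously is a nontrivial system; it can be solved, but this needs the elements $x_\tau$ of Proposition~2.5 and the explicit series manipulations from the proof of Theorem~2.3, not Lemma~3.2 or Proposition~2.6 as you cite.

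The paper sidesteps the surjectivity issue entirely. Rather than lifting from level $l-1$, it works from the \emph{bottom}: the base case gives a $K_\infty$-basis $e_1,\dots,e_d$ of $W_{+,1}^{K-\la}(l-1)=\bigl(t_K^{l-1}W_{+,l}\bigr)^{K-\la}$, and one sets $f_i=t_K^{1-l}e_i\in W_{+,l}$. These $f_i$ form a basis of $W_{+,l}$ over $\widehat{K}_\infty[t_K]/t_K^l$ (they reduce to a basis modulo $t_K$), and Lemma~3.2, applied $l-1$ times, shows each $f_i$ is $K$-locally analytic. This is exactly where Lemma~3.2 is designed to be used: it lets you \emph{divide} a $K$-analytic vector by $t_K$, not correct an arbitrary lift. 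One then concludes via Lemma~2.1 and Proposition~2.6 that $W_{+,l}^{K-\la}=\bigoplus_i K_\infty[t_K]/t_K^l\cdot f_i$.
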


\begin{proof}
Recall that $\D_{\dif,K}^{+,l}(V)=W_{+,l}^{K-\la}$. By Lemma 3.1,
proving (1) reduces to showing that the natural map $\widehat{K}_{\infty}\otimes_{K_{\infty}}W_{+,l}^{K-\la}\rightarrow W_{+,l}$
is an isomorphism and that $W_{+,l}^{K-\la}$ is a free $K_{\infty}[t_{K}]/t_{K}^{l}$-module
of rank $d$. By Theorem 2.3, this is true if $l=1$. For $l\geq2$,
we have a short exact sequence
\[
0\rightarrow W_{+,1}^{K-\la}(l-1)\rightarrow W_{+,l}^{K-\la}\rightarrow W_{+,l-1}^{K-\la}.
\]
By the case $l=1$, we know that $W_{+,1}^{K-\la}(l-1)$ contains
linearly independent elements $e_{1},...,e_{d}$ which are all divisible
by $t_{K}^{l-1}$. Writing $f_{i}=t_{K}^{1-l}e_{i}$ for $1\leq i\leq d$,
the elements $f_{1},...,f_{d}$ span a free submodule $W_{+,l}^{'}$
of $W_{+,l}$ which surjects onto $W_{+,l-1}$ and which contains
$W_{+,1}$; so $W_{+,l}^{'}=W_{+,l}$. It now suffices to show that
the $f_{i}$ are locally $K$-analytic, and this follows from Lemma
3.2. This concludes the proof of (1).

As each $W_{+,l}^{K-\la}$ is a free $K_{\infty}[t_{K}]/t_{K}^{l}$-module
of rank $d$, we have that $W_{+}^{K-\pa}=\underset{\leftarrow}{\lim}W_{+,l}^{K-\la}$
is a free $K_{\infty}[[t_{K}]]$-module of rank $d$, and the chain
of isomorphisms

\[
\begin{aligned}\B_{\dR}^{+}\otimes_{K_{\infty}[[t_{K}]]}\D_{\dif,K}^{+}(V) & \cong\B_{\dR}^{+}\otimes_{K_{\infty}[[t_{K}]]}W^{K-\pa}\\
 & \cong\underset{\leftarrow}{\lim}\left(\B_{\dR}^{+}/t_{K}^{l}\otimes_{K_{\infty}[t_{K}]/t_{K}^{l}}W_{l}^{K-\la}\right)\\
 & \cong\underset{\leftarrow}{\lim}\left(\B_{\dR}^{+}/t_{K}^{l}\otimes_{K}V\right)\\
 & \cong\B_{\dR}^{+}\otimes_{K}V,
\end{aligned}
\]
whose composition is the natural map $\B_{\dR}^{+}\otimes_{K_{\infty}[[t_{K}]]}\D_{\dif,K}^{+}(V)\rightarrow\B_{\dR}^{+}\otimes_{K}V$.
This proves (2), and (3) follows immediately since $\D_{\dif,K}(V)=\mathrm{colim}_{i}\D_{\dif,K}^{+}(V\left(i\right))$.
\end{proof}
Recall from $\mathsection2$ that the modules $\D_{\Sen,K}$ and $\D_{\dif,K}$
are both endowed with a canonical differential operator. We write
$\Theta_{\Sen,K}$,$\nabla_{\dif,K}$ respectively for the operators
acting on $\D_{\Sen,K}$,$\D_{\dif,K}$. The operator $\Theta_{\Sen,K}$
is $K_{\infty}$-linear, while $\nabla_{\dif,K}$ is a derivation
over $\nabla_{K_{\infty}\left(\left(t_{K}\right)\right)}=t_{K}\frac{\partial}{\partial t_{K}}$.

The following result serves to complete the analogy with the usual
$\D_{\mathrm{Sen}}$.
\begin{prop}
The following are equivalent.

1. $\C_{p}\otimes_{K}V\cong\oplus_{i=1}^{d}\C_{p}(\chi_{\pi}^{n_{i}})$,
where the $n_{i}\in\Z$.

2. $\D_{\Sen,K}(V)\cong\oplus_{i=1}^{d}K_{\infty}(\chi_{\pi}^{n_{i}})$,
where the $n_{i}\in\Z$.

3. $\Theta_{\Sen,K}$ is semisimple with integer eigenvalues $\left\{ n_{i}\right\} _{i=1}^{d}$.
\end{prop}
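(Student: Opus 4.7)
The plan is to establish the equivalences (1) $\Leftrightarrow$ (2) and (2) $\Leftrightarrow$ (3) in turn, using Proposition 3.3 as the bridge between $\C_{p}$- and $K_{\infty}$-coefficients. Recall that by definition $\D_{\Sen,K}(V) = \left((\B_{\dR}^{+}/t_{K}) \otimes_{K} V\right)^{H_{K}, K-\la} = (\C_{p} \otimes_{K} V)^{H_{K}, K-\la}$.

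For (2) $\Rightarrow$ (1) I would tensor the decomposition $\D_{\Sen,K}(V) \cong \oplus_{i=1}^{d} K_{\infty}(\chi_{\pi}^{n_{i}})$ with $\C_{p}$ over $K_{\infty}$; by Proposition 3.3(1) with $l=1$, the left side is identified with $\C_{p} \otimes_{K} V$, and the right side becomes $\oplus_{i=1}^{d} \C_{p}(\chi_{\pi}^{n_{i}})$. For the converse (1) $\Rightarrow$ (2), I would apply the functor of $H_{K}$-invariants followed by $K$-locally analytic vectors to the hypothesized isomorphism. This functor commutes with finite direct sums, so it suffices to identify $(\C_{p}(\chi_{\pi}^{n}))^{H_{K}, K-\la}$ with $K_{\infty}(\chi_{\pi}^{n})$ for each $n \in \Z$: Ax-Sen-Tate gives the $H_{K}$-invariants as $\widehat{K}_{\infty}(\chi_{\pi}^{n})$, and since the character $\chi_{\pi}^{n}$ is itself locally $K$-analytic, a vector is $K$-locally analytic for the twisted action iff for the natural one, so Proposition 2.2 yields $K_{\infty}(\chi_{\pi}^{n})$. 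The implication (2) $\Rightarrow$ (3) then follows from the defining limit on a generator of each summand: $\Theta_{\Sen,K}(v) = \lim_{g \to 1} \frac{\chi_{\pi}(g)^{n_{i}} - 1}{\log \chi_{\pi}(g)} v = n_{i} v$.

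The substantive direction is (3) $\Rightarrow$ (2). Since $\Gamma_{K}$ is abelian, $\Theta_{\Sen,K}$ commutes with the $\Gamma_{K}$-action (from the limit formula $\Theta_{\Sen,K}(w) = \lim_{g \to 1} (g(w)-w)/\log\chi_{\pi}(g)$), so the eigenspace decomposition $\D_{\Sen,K}(V) = \oplus_{i} D_{i}$ with $\Theta_{\Sen,K}$ acting as $n_{i}$ on $D_{i}$ is a decomposition of $K_{\infty}$-semilinear $\Gamma_{K}$-representations. For $v \in D_{i}$ and $g$ in a sufficiently small open subgroup, the Taylor expansion of the $K$-locally analytic action gives $g(v) = \sum_{k \geq 0} l(g)^{k} n_{i}^{k} v / k! = \exp(n_{i} l(g)) v = \chi_{\pi}(g)^{n_{i}} v$, where the last equality uses $n_{i} \in \Z$ so that the character $\chi_{\pi}^{n_{i}}$ is globally defined and agrees with the exponential near the identity. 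Choosing $\Gamma_{K_{m}}$ small enough for this to hold uniformly on a $K_{\infty}$-basis of $D_{i}$, the twist $D_{i} \otimes_{K_{\infty}} K_{\infty}(\chi_{\pi}^{-n_{i}})$ becomes trivial as a $\Gamma_{K_{m}}$-semilinear representation, and Galois descent then produces $D_{i} \cong K_{\infty}(\chi_{\pi}^{n_{i}})^{r}$ as $\Gamma_{K}$-representations.

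The main obstacle is the descent step for the pro-cyclic extension $K_{\infty}/K_{m}$ (concretely, the vanishing of $H^{1}_{\mathrm{cont}}(\Gamma_{K_{m}}, \GL_{r}(K_{\infty}))$, after which classical Hilbert 90 for the finite Galois extension $K_{m}/K$ supplies the final trivialization). I would handle it by passing to the completion $\widehat{K}_{\infty}$, invoking the standard Hilbert 90 / Sen descent there, and then recovering the $K_{\infty}$-rational structure from the $K$-locally analytic vectors already constructed inside $D_{i}(-n_{i})$ via Theorem 2.3.
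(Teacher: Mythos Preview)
Your argument is correct, and the organization into $(1)\Leftrightarrow(2)$ and $(2)\Leftrightarrow(3)$ works, but the final paragraph reveals a confusion that you should clear up, and the paper's route is a bit cleaner.

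For the direction you call ``substantive,'' the paper argues $(3)\Rightarrow(1)$ rather than $(3)\Rightarrow(2)$. It integrates exactly as you do to obtain, for an eigenbasis $v_1,\dots,v_d$ of $\Theta_{\Sen,K}$, the relation $g(v_i)=\eta_i(g)\chi_\pi^{n_i}(g)v_i$ for all $g\in\Gamma_K$, with $\eta_i$ a finite order character (trivial on the small subgroup where your Taylor expansion applies). Then it simply uses that $\C_p(\eta_i)\cong\C_p$ for finite order $\eta_i$ and tensors up via Proposition~3.3. This sidesteps any descent over $K_\infty$: the finite order characters are absorbed by $\C_p$ rather than killed by Hilbert~90.

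Your route $(3)\Rightarrow(2)$ is fine too, but your last paragraph manufactures a difficulty that isn't there. Once you have shown that a chosen $K_\infty$-basis of $D_i$ satisfies $g(e_j)=\chi_\pi(g)^{n_i}e_j$ for $g\in\Gamma_{K_m}$, you have \emph{already} exhibited a $\Gamma_{K_m}$-fixed basis of $D_i(-n_i)$; there is no cohomological obstacle in $H^1_{\mathrm{cont}}(\Gamma_{K_m},\GL_r(K_\infty))$ left to address, and no need to pass to $\widehat{K}_\infty$ or to invoke Theorem~2.3. The $\Gamma_{K_m}$-invariants $D_i(-n_i)^{\Gamma_{K_m}}$ form a $K_m$-vector space with the $e_j$ as basis, on which $\Gamma_K/\Gamma_{K_m}=\Gal(K_m/K)$ acts semilinearly; classical Hilbert~90 for this finite extension then gives a $\Gamma_K$-fixed basis, and hence $D_i\cong K_\infty(\chi_\pi^{n_i})^r$ as desired. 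So drop the detour through $\widehat{K}_\infty$ entirely.
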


\begin{proof}
Suppose $v_{1},...,v_{d}$ is a basis of $\C_{p}\otimes_{K}V$ for
which $g(v_{i})=\chi_{\pi}^{n_{i}}(g)v_{i}$ for $g\in G_{K}$. The
action of $G_{K}$ on each $v_{i}$ factors through $\Gamma_{K}$
and is locally $K$-analytic, so $v_{i}\in\D_{\Sen,K}(V)$, which
shows (1) implies (2). Next, (2) implies (3) because the action of
$\Theta_{\Sen,K}$ on $K_{\infty}(\chi_{\pi}^{n_{i}})$ is given by
multiplication with $n_{i}$. Finally, suppose that (3) holds, and
let $v_{1},...,v_{d}$ be a basis of $\D_{\Sen,K}(V)$ for which $\Theta_{\Sen,K}(v_{i})=n_{i}v_{i}$.
By integrating the action of $\Gamma_{K}$, we see that $g\in\Gamma_{K}$
acts by $g(v_{i})=\eta_{i}(g)\chi_{\pi}^{n_{i}}(g)v_{i}$, where $\eta_{i}$
is a finite order character of $\Gamma_{K}$. Then $\C_{p}(\eta_{i}\chi_{\pi}^{n_{i}})\cong\C_{p}(\chi_{\pi}^{n_{i}})$,
so

\[
\begin{aligned}\C_{p}\otimes_{K}V & \cong\C_{p}\otimes_{K_{\infty}}\D_{\Sen,K}(V)\\
 & \cong\oplus_{i=1}^{d}\C_{p}(\eta_{i}\chi_{\pi}^{n_{i}})\\
 & \cong\oplus_{i=1}^{d}\C_{p}(\chi_{\pi}^{n_{i}}).
\end{aligned}
\]
\end{proof}
If the conditions of Proposition 3.4 hold for some $n_{i}\in\Z$,
the $n_{i}$ are called the $K$-Hodge-Tate weights of $V$.

\subsection{The modules $\protect\D_{\protect\HT,K}$ and $\protect\D_{\protect\dR,K}$}

Let $\B_{\HT,K},\B_{\dR,K},\B_{\dR,K}$ respectively be the rings
$\C_{p}[t_{K},t_{K}^{-1}],\B_{\dR}^{+}$ and $\B_{\dR}$. We set $\D_{\HT,K}(V)=\left(\B_{\HT,K}\otimes_{K}V\right)^{G_{K}}$,
$\D_{\dR,K}^{+}(V)=\left(\B_{\dR,K}^{+}\otimes_{K}V\right)^{G_{K}}$
and $\D_{\dR,K}(V)=\left(\B_{\dR,K}\otimes_{K}V\right)^{G_{K}}$.
We say that $V$ is $K$-Hodge-Tate (resp. positive $K$-de Rham,
resp. $K$-de Rham) if $\dim_{K}\D_{\HT,K}(V)=d$ (resp. $\dim_{K}\D_{\dR,K}^{+}(V)=d$,
resp. $\dim_{K}\D_{\dR,K}(V)=d$).
\begin{lem}
$(\C_{p}\otimes_{K}V)^{G_{K}}=\left(\D_{\Sen,K}(V)\right)^{\Gamma_{K}}$.
\end{lem}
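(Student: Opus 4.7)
The plan is to unwind the definitions and observe that the two sides differ only by the passage through $K$-locally analytic vectors, which is invisible on the $\Gamma_K$-fixed part.

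First I would set $W_{+,1} = (\C_p \otimes_K V)^{H_K}$, so that by definition $\D_{\Sen,K}(V) = W_{+,1}^{K-\la}$. Using that $G_K/H_K = \Gamma_K$, ordinary Galois descent gives
\[
(\C_p \otimes_K V)^{G_K} = \left((\C_p \otimes_K V)^{H_K}\right)^{\Gamma_K} = W_{+,1}^{\Gamma_K}.
\]
Hence the claim reduces to the equality $W_{+,1}^{\Gamma_K} = \left(W_{+,1}^{K-\la}\right)^{\Gamma_K}$.

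One inclusion is completely formal: $\left(W_{+,1}^{K-\la}\right)^{\Gamma_K} \subseteq W_{+,1}^{\Gamma_K}$ since $W_{+,1}^{K-\la} \subseteq W_{+,1}$. For the reverse inclusion, take $w \in W_{+,1}^{\Gamma_K}$. Then the orbit map $\Gamma_K \to W_{+,1}$, $g \mapsto g(w) = w$, is the constant function, which is trivially $K$-analytic on all of $\Gamma_K$ (the expansion $g(w) = \sum_{k} l(g)^k w_k$ holds with $w_0 = w$ and $w_k = 0$ for $k \neq 0$). Therefore $w \in W_{+,1}^{K-\la}$, and being $\Gamma_K$-fixed it belongs to $\left(W_{+,1}^{K-\la}\right)^{\Gamma_K}$.

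This is really a formality, and there is no hard step; the only point to get right is the bookkeeping of definitions (i.e.\ that $\D_{\Sen,K}(V)$ was defined as $W_{+,1}^{K-\la}$ in the previous subsection) and the observation that constant functions are $K$-analytic, so that the $K$-analyticity condition imposes nothing on fixed vectors.
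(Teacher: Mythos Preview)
Your proof is correct and uses the same key observation as the paper: a $\Gamma_K$-fixed vector has constant orbit map, hence is automatically $K$-locally analytic. The paper takes a marginally more roundabout route, passing through the identification $(\widehat{K}_\infty \otimes_{K_\infty} \D_{\Sen,K}(V))^{\Gamma_K} = (\C_p \otimes_K V)^{G_K}$ via Proposition~3.3 and then invoking Lemma~2.1 to get $(\widehat{K}_\infty \otimes_{K_\infty} \D_{\Sen,K}(V))^{K-\la} = \D_{\Sen,K}(V)$; your argument works directly with $W_{+,1}$ and avoids both citations, but the substance is identical.
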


\begin{proof}
By Proposition 3.3, we have 
\[
\left(\C_{p}\otimes_{K}V\right)^{G_{K}}=\left(\C_{p}\otimes\D_{\Sen,K}(V)\right)^{G_{K}}=\left(\hat{K}_{\infty}\otimes_{K_{\infty}}\D_{\Sen,K}(V)\right)^{\Gamma_{K}}.
\]
As $\left(\hat{K}_{\infty}\otimes_{K_{\infty}}\D_{\Sen,K}(V)\right)^{\Gamma_{K}}$
is fixed by the action of $\Gamma_{K}$, it is also locally $K$-analytic
on $\Gamma_{K}$, so it is contained in $\left(\hat{K}_{\infty}\otimes_{K_{\infty}}\D_{\Sen,K}(V)\right)^{K-\la}$.
But according to Lemma 2.1, 
\[
\left(\hat{K}_{\infty}\otimes_{K_{\infty}}\D_{\Sen,K}(V)\right)^{K-\la}=\D_{\Sen,K}(V).
\]
\end{proof}
\begin{prop}
1. $\D_{\HT,K}(V)=\oplus_{l\in\Z}\left(\D_{\Sen,K}(V)t_{K}^{l}\right)^{\Gamma_{K}}$.

2. The natural map $\B_{\HT,K}\otimes_{K}\D_{\HT,K}(V)\rightarrow\B_{\HT,K}\otimes_{K}V$
is an isomorphism if $V$ is $K$-Hodge-Tate.

3. $V$ is $\C_{p}$-admissible if and only if $\Theta_{\Sen,K}=0$
on $\D_{\Sen,K}(V)$.
\end{prop}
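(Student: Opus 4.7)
The plan is to establish (1), (2), and (3) in succession, exploiting throughout the grading $\B_{\HT,K}=\bigoplus_{l\in\Z}\C_{p}\cdot t_{K}^{l}$, which is $G_{K}$-stable with $g$ acting on $t_{K}^{l}$ via $\chi_{\pi}(g)^{l}$.

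Part (1) is essentially bookkeeping on this grading: $G_{K}$-invariants split over $l$, and the $l$-th summand $((\C_{p}\otimes_{K}V)\cdot t_{K}^{l})^{G_{K}}$ consists of elements $wt_{K}^{l}$ with $g(w)=\chi_{\pi}(g)^{-l}w$. Any such $w$ is automatically $H_{K}$-fixed, since $\chi_{\pi}|_{H_{K}}=1$, and its $\Gamma_{K}$-orbit lies on the $K$-line $K\cdot w$ with continuous $\Gamma_{K}$-action, so $w$ is $K$-locally analytic. Hence $w\in\D_{\Sen,K}(V)$, and the summand is identified with $(\D_{\Sen,K}(V)t_{K}^{l})^{\Gamma_{K}}$.

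For (2), assume $V$ is $K$-Hodge-Tate and for each $l$ pick a $K$-basis $\{w_{l,j}t_{K}^{l}\}_{j=1}^{d_{l}}$ of $(\D_{\Sen,K}(V)t_{K}^{l})^{\Gamma_{K}}$ with $w_{l,j}\in\D_{\Sen,K}(V)$; by hypothesis $\sum_{l}d_{l}=d$. The heart of the argument is to show that the $w_{l,j}$ are $K_{\infty}$-linearly independent, and hence a $K_{\infty}$-basis of $\D_{\Sen,K}(V)$ by Proposition 3.3. This is a Dedekind shortest-relation argument: normalising a minimal nonzero $K_{\infty}$-linear relation to have coefficient $1$ at some index $(l_{0},j_{0})$, applying $g\in\Gamma_{K}$ and subtracting yields a strictly shorter relation unless every coefficient satisfies $g(a_{l,j})=\chi_{\pi}(g)^{l-l_{0}}a_{l,j}$. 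The Lubin-Tate analogue of Tate's theorem that $K_{\infty}$ contains no nonzero $\chi_{\pi}^{m}$-eigenvector for $m\neq 0$ then forces $l=l_{0}$, after which $a_{l_{0},j}\in K_{\infty}^{\Gamma_{K}}=K$, contradicting the $K$-independence of $\{w_{l_{0},j}t_{K}^{l_{0}}\}_{j}$. Granted this basis, extending scalars shows that the $w_{l,j}t_{K}^{l}$ span $\B_{\HT,K}\otimes_{K}V$ over $\B_{\HT,K}$, giving surjectivity of the natural map; injectivity is the classical shortest-relation argument using $\B_{\HT,K}^{G_{K}}=K$, which likewise reduces to $\C_{p}(\chi_{\pi}^{m})^{G_{K}}=0$ for $m\neq 0$.

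Part (3) is immediate from Proposition 3.4: $V$ is $\C_{p}$-admissible iff $\C_{p}\otimes_{K}V\cong\C_{p}^{d}$ trivially as a $G_{K}$-representation, iff every $K$-Hodge-Tate weight vanishes, iff $\Theta_{\Sen,K}$ is semisimple with sole eigenvalue $0$, iff $\Theta_{\Sen,K}=0$. The main obstacle in this plan is the $K_{\infty}$-linear independence claim in (2); although the shortest-relation manipulation itself is routine, it depends on the Tate-type vanishing of $\chi_{\pi}^{m}$-isotypic parts of $K_{\infty}$, which one extracts either from $\C_{p}(\chi_{\pi}^{m})^{G_{K}}=0$ for $m\neq 0$ or from the vanishing of the Sen operator on $\widehat{K}_{\infty}$ already implicit in the proof of Proposition 2.2.
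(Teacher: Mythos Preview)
Your proof is correct and follows essentially the same approach as the paper. The only notable difference is in part (2): the paper invokes Proposition 3.4 to obtain $\D_{\Sen,K}(V)\cong\bigoplus_{i}K_{\infty}(\chi_{\pi}^{n_{i}})$ directly and then chains isomorphisms $\B_{\HT,K}\otimes_{K}\D_{\HT,K}(V)\cong\B_{\HT,K}\otimes_{K_{\infty}}\D_{\Sen,K}(V)\cong\B_{\HT,K}\otimes_{K}V$, whereas you spell out the underlying $K_{\infty}$-linear independence of the eigenvectors $w_{l,j}$ via a Dedekind shortest-relation argument. Your version makes explicit the Tate-type vanishing $(K_{\infty})^{\Gamma_{K}=\chi_{\pi}^{m}}=0$ for $m\neq 0$ that the paper leaves implicit in its appeal to Proposition 3.4, but the content is the same.
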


\begin{proof}
We have $\D_{\HT,K}(V)=\oplus_{l\in\Z}\left(\C_{p}t_{K}^{l}\otimes_{K}V\right)^{G_{K}}$,
so (upon twisting $V$ with an appropriate power of $\chi_{\pi}$)
the equality $\D_{\HT,K}(V)=\oplus_{l\in\Z}\left(\D_{\Sen,K}(V)t_{K}^{l}\right)^{\Gamma_{K}}$
reduces to the verification that $(\C_{p}\otimes_{K}V)^{G_{K}}=\left(\D_{\Sen,K}(V)\right)^{\Gamma_{K}}$,
which was done in the previous lemma. This proves (1). To prove (2),
suppose $V$ is $K$-Hodge-Tate. Then by (1) and Proposition 3.4 we
have $\D_{\Sen,K}(V)\cong\oplus_{i=1}^{d}K_{\infty}(\chi_{\pi}^{n_{i}})$,
which gives the second isomorphism in

\[
\begin{aligned}\B_{\HT,K}\otimes_{K}\D_{\HT,K}(V) & \cong\B_{\HT,K}\otimes_{K}\left(\oplus_{l\in\Z}\left(\D_{\Sen,K}(V)t_{K}^{l}\right)^{\Gamma_{K}}\right)\\
 & \cong\B_{\HT,K}\otimes_{K_{\infty}}\D_{\Sen,K}(V)\\
 & \cong\B_{\HT,K}\otimes_{K}V.
\end{aligned}
\]

Finally, (3) follows from Proposition 3.4.
\end{proof}
By the same logic one obtains similar results for $\D_{\dR,K}$.
\begin{prop}
1. $\D_{\dR,K}(V)=\D_{\dif,K}(V)^{\Gamma_{K}}$.

2. The natural map $\B_{\dR,K}\otimes_{K}\D_{\dR,K}(V)\rightarrow\B_{\dR,K}\otimes_{K}V$
is an isomorphism if $V$ is $K$-de Rham.

3. $V$ is $K$-de Rham if and only if $\nabla_{\dif,K}$ has a full
set of sections on $\D_{\dif,K}(V)$.
\end{prop}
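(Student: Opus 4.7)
The plan is to deduce all three parts by combining the $\D_{\dif,K}$-theory from Proposition 3.3 with descent from $K_\infty((t_K))$ to $K$, in parallel with the Hodge--Tate case handled in Proposition 3.6.

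Part (1) is essentially unwinding definitions. Writing $W = (\B_{\dR} \otimes_K V)^{H_K}$, one has $\D_{\dR,K}(V) = W^{\Gamma_K}$ by definition, while $\D_{\dif,K}(V) = W^{K-\pa}$ by Proposition 3.3(3). Since every $\Gamma_K$-fixed vector is trivially pro-$K$-analytic (its orbit map is constant), $W^{\Gamma_K} \subseteq W^{K-\pa}$, so taking $\Gamma_K$-invariants inside $\D_{\dif,K}(V)$ recovers $\D_{\dR,K}(V)$.

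For part (2), Proposition 3.3(3) already gives $\B_{\dR} \otimes_{K_\infty((t_K))} \D_{\dif,K}(V) \cong \B_{\dR} \otimes_K V$, so it suffices to show that, under the $K$-de Rham hypothesis, the natural map
\[
K_\infty((t_K)) \otimes_K \D_{\dR,K}(V) \longrightarrow \D_{\dif,K}(V)
\]
is an isomorphism. First one checks $(K_\infty((t_K)))^{\Gamma_K} = K$: the $k$-th coefficient $a_k$ of a $\Gamma_K$-fixed Laurent series must satisfy $g(a_k) = \chi_{\pi}(g)^{-k} a_k$, and Sen theory rules out such nonzero semi-invariants in $K_\infty$ for $k \neq 0$, leaving only the constant term in $K$. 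A standard Speiser-type argument (choose a minimal-length relation $\sum \alpha_i \otimes x_i = 0$ with $x_i$ chosen $K$-linearly independent, normalize $\alpha_1 = 1$, and apply $g-1$) then shows the above map is injective. Since the right-hand side has $K_\infty((t_K))$-dimension $d$ by Proposition 3.3(3) and the left-hand side has $K_\infty((t_K))$-dimension $d$ by the $K$-de Rham hypothesis, the injection is an isomorphism, and tensoring up by $\B_{\dR}$ yields (2).

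For part (3), one interprets ``$\nabla_{\dif,K}$ has a full set of sections'' as the existence of a $K_\infty((t_K))$-basis of $\D_{\dif,K}(V)$ contained in $\ker(\nabla_{\dif,K})$. The forward direction is immediate from (2): any $K$-basis of $\D_{\dR,K}(V)$ furnishes such a basis, as $\Gamma_K$-invariance forces vanishing of all $\nabla^k$ via the locally analytic expansion $g(w) = \sum_k l(g)^k \nabla^k(w)/k!$. Conversely, given a basis $e_1, \ldots, e_d \in \ker(\nabla_{\dif,K})$, the same expansion shows each $e_i$ is fixed by some open subgroup $\Gamma_{K_{n_i}}$, so taking $n = \max_i n_i$ all $e_i$ lie in $\D_{\dif,K}(V)^{\Gamma_{K_n}}$. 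The same Sen-theoretic computation gives $(K_\infty((t_K)))^{\Gamma_{K_n}} = K_n$ for $n$ large enough, and the Speiser argument again provides an injection $K_\infty((t_K)) \otimes_{K_n} \D_{\dif,K}(V)^{\Gamma_{K_n}} \hookrightarrow \D_{\dif,K}(V)$, which combined with the dimension count forces $\dim_{K_n} \D_{\dif,K}(V)^{\Gamma_{K_n}} = d$. Finite Galois descent along $\Gal(K_n/K)$ (Hilbert 90) then yields
\[
\dim_K \D_{\dR,K}(V) = \dim_K \bigl(\D_{\dif,K}(V)^{\Gamma_{K_n}}\bigr)^{\Gal(K_n/K)} = d,
\]
so $V$ is $K$-de Rham.

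The main obstacle is the subtlety in part (3): the natural ``constants'' of the connection $\nabla_{\dif,K}$ live over $K_\infty$ rather than $K$, and horizontal vectors are only fixed by some open subgroup of $\Gamma_K$, not necessarily all of $\Gamma_K$. Bridging this gap requires first descending to the intermediate field $K_n$ via the open-subgroup invariants and then applying finite Galois descent, rather than a single clean descent step.
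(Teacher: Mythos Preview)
Your argument is correct and matches the approach the paper gestures at (the paper's own proof is literally ``by the same logic'' as Proposition~3.6, and you have written out that logic in detail).

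One step in part~(3) deserves a word of caution. When you invoke the expansion $g(w)=\sum_{k}l(g)^{k}\nabla^{k}(w)/k!$ to conclude that a horizontal $e_i$ is fixed by some $\Gamma_{K_{n_i}}$, remember that the $e_i$ are only \emph{pro}-$K$-analytic: the expansion is guaranteed in each Banach quotient $\D_{\dif,K}^{+,l}(V)$ separately, with a radius that could in principle shrink as $l\to\infty$. To close this, observe first that $\nabla$ commutes with $\Gamma_K$, so $g(e_i)$ is again horizontal; writing $g(e_i)=\sum_j M_{ji}(g)e_j$ and applying $\nabla$ forces $M_{ji}(g)\in K_\infty$ (the constants of $t_K\partial/\partial t_K$). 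Since $M(g)$ has coefficients in $K_\infty$, the equation $M(g)=I$ holds in $\D_{\dif,K}(V)$ as soon as it holds modulo $t_K$, i.e.\ in the Sen quotient, where the images of the $e_i$ are genuinely locally $K$-analytic and the integration argument of Proposition~3.4 applies directly. With this remark your descent to $K_n$ and the subsequent Hilbert~90 step go through as written.
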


\subsection{The modules $\protect\D_{\protect\cris,K}$ and $\protect\D_{\protect\st,K}$ }

Recall that $\B_{\max}^{+}$ is a period ring similar to Fontaine's
$\B_{\cris}^{+}$. The element\footnote{It is $\log_{\mathcal{F}}u$ for the element $u$ introduced in $\mathsection4.1$
below.} $t_{K}$ introduced in $\mathsection2.3$ actually lies in $\B_{\max}^{+}\otimes_{K_{0}}K$.
Denote by $\B_{\max,K}^{+}$, $\B_{\st,K}^{+}$, $\B_{\max,K}$, $\B_{\st,K}$
respectively the rings $\B_{\max}^{+}\otimes_{K_{0}}K$, $\B_{\st}^{+}\otimes_{K_{0}}K$,
$\B_{\max,K}^{+}\left[\frac{1}{t_{K}}\right]$ and $\B_{\st,K}^{+}\left[\frac{1}{t_{K}}\right]$.
These rings carry a $\varphi_{q}=\varphi^{f}$-action, and the usual
monodromy operator $N$ of $\B_{\st}^{+}$ extends to $\B_{\st,K}$
with $\B_{\st,K}^{N=0}=\B_{\max,K}$. If $L$ is a finite extension
of $K$, we set $\D_{\cris,K}^{+}(V)$, $\D_{\st,K}^{+}(V)$, $\D_{\cris,K}(V)$,
$\D_{\st,K}(V)$ respectively to be $\D_{\cris,K}^{+}(V)=\left(\B_{\max,K}^{+}\otimes_{K}V\right)^{G_{L}}$,
$\D_{\st,K}^{+}(V)=\left(\B_{\st,K}^{+}\otimes_{K}V\right)^{G_{L}}$,
$\D_{\cris,K}(V)=\left(\B_{\max,K}\otimes_{K}V\right)^{G_{L}}$ and
$\D_{\st,K}(V)=\left(\B_{\st,K}\otimes_{K}V\right)^{G_{L}}$. 
\begin{lem}
If $V$ is a $K$-analytic representation, then $\D_{\cris,K}(V)=\left(\B_{\max}\otimes_{K_{0}}V\right)^{G_{L}}$.
\end{lem}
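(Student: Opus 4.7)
The strategy is to reduce the claimed equality to a comparison of two localizations of a common submodule, and then to deploy the $K$-analyticity hypothesis. Since $V$ is a $K$-vector space, we have the identifications
\[
\B_{\max}\otimes_{K_{0}}V\cong(\B_{\max}\otimes_{K_{0}}K)\otimes_{K}V\cong\B_{\max,K}^{+}[1/t]\otimes_{K}V,
\]
whereas by definition $\B_{\max,K}\otimes_{K}V=\B_{\max,K}^{+}[1/t_{K}]\otimes_{K}V$. Both tensor products share the common submodule $\B_{\max,K}^{+}\otimes_{K}V$, whose $G_{L}$-invariants form $\D_{\cris,K}^{+}(V)$. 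So the lemma reduces to showing that the $G_{L}$-invariants of $\B_{\max,K}^{+}\otimes_{K}V$ after inverting $t$ or after inverting $t_{K}$ coincide.

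Both $t$ and $t_{K}$ lie in $\B_{\max,K}^{+}$, and both are uniformizers of $\B_{\dR}^{+}$, so their ratio $u:=t/t_{K}$ is a unit in $\B_{\dR}^{+}$ satisfying $g(u)=(\chi_{\cyc}(g)/\chi_{\pi}(g))\,u$ for $g\in G_{K}$. By the relation $\chi_{\cyc}=\N_{K/\Q_{p}}\circ\chi_{\pi}\cdot\eta$ with $\eta$ unramified, this transformation character equals $\prod_{\tau\in\Sigma_{K}\setminus\{\Id\}}\tau(\chi_{\pi})\cdot\eta$, a character supported on the non-identity embeddings of $K$. Using the semi-invariance of $t$ and $t_{K}$, each $G_{L}$-invariant in $\D_{\cris,K}(V)$ has the form $t_{K}^{-n}w$ with $w\in\B_{\max,K}^{+}\otimes_{K}V$ satisfying $g(w)=\chi_{\pi}^{n}(g)w$, while each $G_{L}$-invariant in $(\B_{\max}\otimes_{K_{0}}V)^{G_{L}}$ has the form $t^{-n}w'$ with $g(w')=\chi_{\cyc}^{n}(g)w'$. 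Since $t_{K}^{-n}=u^{n}t^{-n}$, comparing the two amounts to controlling the element $u^{n}w\in\B_{\dR}^{+}\otimes_{K}V$ and showing it always lies in the appropriate localized subring.

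The main obstacle is that $u\in\B_{\dR}^{+}$ need not belong to $\B_{\max,K}^{+}$, so multiplication by $u^{n}$ can take us outside the crystalline rings. The key point is that $u$ transforms by a character concentrated on non-identity embeddings of $K$, which for $K$-analytic $V$ contribute trivially to $p$-adic Hodge theory (concretely, $\nabla_{\tau}$ acts as $0$ on $V$ for $\tau\neq\Id$). I would make this precise by arguing that any $G_{L}$-invariant in $\B_{\dR}\otimes_{K}V$ that lies in one of the two localized rings must lie in the other, using a descent argument analogous to Theorem 2.3 applied to the crystalline rings and exploiting vanishing of the non-identity $\nabla_{\tau}$'s on $V$. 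Alternatively, one can invoke the explicit description $t_{K}=\log_{\mathcal{F}}u$ from $\mathsection4.1$ to relate $t_{K}$ and $t$ directly via the formal logarithm, which should make the change of variables between the two localizations transparent once $K$-analyticity is imposed.
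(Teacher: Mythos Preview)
Your setup is sound: both sides are obtained from $\B_{\max,K}^{+}\otimes_{K}V$ by inverting $t_{K}$ or $t$ respectively, and you correctly isolate the obstacle that $u=t/t_{K}$, though a unit in $\B_{\dR}^{+}$, need not lie in $\B_{\max,K}^{+}$. But you do not actually overcome this obstacle. The appeal to ``a descent argument analogous to Theorem~2.3'' is not meaningful here: that theorem concerns $\widehat{L}_{\infty}$-semilinear representations of $\Gamma_{L}$ and has no obvious bearing on comparing $\B_{\max,K}^{+}[1/t_{K}]$ with $\B_{\max,K}^{+}[1/t]$. Your alternative, invoking $t_{K}=\log_{\mathcal{F}}(u)$, is not developed and overloads the letter $u$ (Lubin--Tate element versus your ratio $t/t_{K}$); even granting the identification, it is unclear how it yields the inclusion $(\B_{\max}\otimes_{K_{0}}V)^{G_{L}}\subset\D_{\cris,K}(V)$.

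The paper's argument avoids the unit $t/t_{K}$ entirely. First, the inclusion $\D_{\cris,K}(V)\subset(\B_{\max}\otimes_{K_{0}}V)^{G_{L}}$ holds for \emph{any} $V$, because $t_{K}$ divides $t$ in $\B_{\max,K}^{+}$ (a fact from Colmez), hence $\B_{\max,K}^{+}[1/t_{K}]\subset\B_{\max,K}^{+}[1/t]$. For the reverse inclusion one uses $K$-analyticity in the following concrete way: since $\C_{p}\otimes_{K}^{\tau}V$ is trivial for $\tau\neq\Id$, the only integer Hodge--Tate weights of $V$ that can be nonzero are those attached to the identity embedding. Twisting by $\chi_{\pi}^{n}$ for $n\gg0$ therefore makes all integer Hodge--Tate weights nonnegative, and in that regime both sides collapse to $\D_{\cris,K}^{+}(V)=(\B_{\max}^{+}\otimes_{K_{0}}V)^{G_{L}}$. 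This twisting trick is the missing idea in your proposal.
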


\begin{proof}
By Lemma 8.17 of \cite{Co02}, the element $t_{K}$ divides the usual
$t$ in $\B_{\max,K}^{+}$. Therefore, $\B_{\max,K}=\B_{\max,K}^{+}\left[\frac{1}{t_{K}}\right]$
is contained in $\B_{\max}\otimes_{K_{0}}K=\B_{\max,K}^{+}\left[\frac{1}{t}\right]$,
and it follows that $\D_{\cris,K}(V)\subset\left(\B_{\max}\otimes_{K_{0}}V\right)^{G_{L}}$
for any representation $V$. It suffices to show when $V$ is $K$-analytic,
this inclusion is an equality. If all the integer-valued Hodge-Tate
weights (in the usual sense) of $V$are nonnegative, this is clear,
because

\[
\D_{\cris,K}\left(V\right)=\D_{\cris,K}^{+}\left(V\right)=\left(\B_{\max}^{+}\otimes_{K_{0}}V\right)^{G_{L}}=\left(\B_{\max}\otimes_{K_{0}}V\right)^{G_{L}}.
\]
In general, it suffices to prove $\D_{\cris,K}(V)\subset\left(\B_{\max}\otimes_{K_{0}}V\right)^{G_{L}}$
after twisting $V$ by a power of $\chi_{\pi}$. Given that $V$ is
$K$-analytic, there exists an $n\gg0$ so that all the integer-valued
Hodge-Tate weights of $V\left(\chi_{\pi}^{n}\right)$ are nonnegative,
and this case was already dealt with.
\end{proof}
To a filtered $(\varphi_{q},N)$-module $\D$ over $L_{0}\otimes_{K_{0}}E$
one can associate two polygons. The Hodge polygon $P_{H}(\D)$, whose
slopes have lengths according to the jumps in the filtration; and
the Newton polygon $P_{N}(\D)$, whose slopes match the slopes of
$\varphi_{q}$ with respect to the valuation $\val_{\pi}$. We say
$\D$ is admissible if the endpoints of $P_{H}(\D)$ and $P_{N}(D)$
are the same and if $P_{H}(\D_{0})$ lies below $P_{N}(\D_{0})$ for
every subobject $\D_{0}$ of $\D$.

If $\D$ is a filtered $(\varphi_{q},N)$-module over $L_{0}\otimes_{K_{0}}E$,
then 
\[
\mathrm{I}_{\Q_{p}}^{K}(\D):=\left(L_{0}\otimes_{\Q_{p}}E\right)\left[\varphi\right]\otimes_{\left(L_{0}\otimes_{K_{0}}E\right)\left[\varphi_{q}\right]}\D
\]
is a filtered $\left(\varphi,N\right)$-module over $L_{0}\otimes_{\Q_{p}}E$.
The following is proved in $\mathsection3$ of \cite{KR09} under
the assumption that $N=0$, but the proof in the general case is the
same. Note that in loc. cit. this statement is actually proved for
$\left(\B_{\max}\otimes_{K_{0}}V\right)^{G_{L}}$ instead of $\D_{\cris,K}(V)$,
but these coincide for $K$-analytic representations by Lemma 3.8.
\begin{prop}
Suppose that $V\in\mathrm{Rep}_{E}(G_{L})$ is $K$-analytic. Then
\[
\mathrm{I}_{\Q_{p}}^{K}(\D_{\st,K}(V))=\D_{\st,\Q_{p}}(V).
\]
Furthermore, $\D_{\st,K}(V)$ is admissible if and only if $\D_{\st,\Q_{p}}(V)$
is admissible.
\end{prop}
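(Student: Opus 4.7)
The plan is to follow the strategy used in \cite{KR09}, §3 for the crystalline case ($N = 0$); the semistable setting proceeds in essentially the same way once one checks that the monodromy operator $N$ is compatible with all of the functorial manipulations below.

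First I would construct the natural comparison map
\[
\alpha : \mathrm{I}_{\Q_{p}}^{K}\bigl(\D_{\st,K}(V)\bigr) \longrightarrow \D_{\st,\Q_{p}}(V).
\]
By Lemma 3.8 and its straightforward analogue for $\B_{\st}$, the $K$-analyticity of $V$ allows the identification $\D_{\st,K}(V) = (\B_{\st} \otimes_{K_{0}} V)^{G_{L}}$. The natural inclusion $\B_{\st} \otimes_{K_{0}} V \hookrightarrow \B_{\st} \otimes_{\Q_{p}} V$, given by restriction of scalars on the coefficient side, is $G_{L}$-equivariant and $(L_{0} \otimes_{K_{0}} E)[\varphi_{q}]$-linear, producing a map $\D_{\st,K}(V) \to \D_{\st,\Q_{p}}(V)$, and $\alpha$ is obtained by extending scalars to $(L_{0} \otimes_{\Q_{p}} E)[\varphi]$.

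To see that $\alpha$ is an isomorphism, I would use the decomposition
\[
L_{0} \otimes_{\Q_{p}} E \;\cong\; \prod_{\tau : K_{0} \hookrightarrow E} L_{0} \otimes_{K_{0}, \tau} E,
\]
on which the absolute Frobenius $\varphi$ cyclically permutes the $f$ factors. Writing $\tau_{0}$ for the embedding coming from $K_{0} \subset E$, the $\tau_{0}$-component of $\D_{\st,\Q_{p}}(V)$ is precisely $\D_{\st,K}(V)$, while the $\varphi^{i}\circ\tau_{0}$-component is obtained by applying $\varphi^{i}$ for $1 \leq i \leq f - 1$; bijectivity of $\alpha$ then follows from an $L_{0} \otimes_{K_{0}, \tau} E$-dimension count on each factor together with $\varphi_{q} = \varphi^{f}$-equivariance.

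For the admissibility claim, the goal is to show that the Hodge and Newton polygons of $\D_{\st,K}(V)$ and $\mathrm{I}_{\Q_{p}}^{K}(\D_{\st,K}(V))$ are related by a single uniform dilation that preserves the relative position of the two polygons. On the Newton side, each $\varphi_{q}$-eigenvalue $\lambda$ produces $f$ $\varphi$-eigenvalues $\mu$ satisfying $\mu^{f} = \lambda$, so using the relation $\val_{\pi} = e \cdot \val_{p}$ on $\Q_{p}$ the polygon dilates horizontally by $f$ and vertically by $1/[K:\Q_{p}]$; the Hodge polygon dilates in the parallel way, since the filtration decomposes along the new embeddings of $L$ into $\overline{\Q}_{p}$. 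Finally, every $(\varphi,N)$-stable $L_{0} \otimes_{\Q_{p}} E$-submodule of $\mathrm{I}_{\Q_{p}}^{K}(\D_{\st,K}(V))$ descends, by averaging over the permutation action of $\varphi$ on the $\tau$-factors, to a $(\varphi_{q},N)$-stable $L_{0} \otimes_{K_{0}} E$-submodule of $\D_{\st,K}(V)$, so the subobject condition in the definition of admissibility transfers in both directions. I expect the main obstacle to be this last descent step on subobjects, combined with the careful bookkeeping of the two normalizations of valuation and Frobenius that the polygon comparison requires.
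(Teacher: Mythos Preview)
Your proposal is correct and is precisely the approach the paper intends: the paper does not give an independent proof but simply refers to \S3 of \cite{KR09} (the $N=0$ case) and asserts that the same argument goes through for general $N$, noting that Lemma 3.8 bridges the two definitions of $\D_{\cris,K}$ when $V$ is $K$-analytic. Your write-up is in fact more detailed than what the paper provides, and your identification of the two points requiring care---the compatibility of $N$ with the $\tau$-decomposition and the descent of $(\varphi,N)$-stable subobjects---is exactly right.
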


We say that $V$ is $K$-potentially semistable if for some finite
extension $L$ of $K$ we have $\rank_{L_{0}\otimes_{K_{0}}E}\D_{\st,K}(V)=\dim_{E}V$.

\begin{cor}
Suppose $V$ is a $K$-analytic representation. Then the following
are equivalent.

1. $V$ is de Rham.

2. $V$ is $K$-de Rham.

3. $V$ is potentially semistable.

4. $V$ is $K$-potentially semistable.
\end{cor}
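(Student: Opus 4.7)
The plan is to establish the implications in the cycle $(1)\Leftrightarrow(3)\Leftrightarrow(4)\Rightarrow(2)\Rightarrow(1)$, relying on three ingredients: the classical $p$-adic monodromy theorem, Proposition~3.9, and a decomposition of $\B_{\dR}\otimes_{\Q_p}V$ indexed by the embeddings in $\Sigma_K$.

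The equivalence $(1)\Leftrightarrow(3)$ is Berger's $p$-adic monodromy theorem, applied to $V$ as a $\Q_p$-representation of $G_K$. For $(3)\Leftrightarrow(4)$, I would apply Proposition~3.9 to $V|_{G_L}$ for a finite extension $L/K$ large enough that one of the two semistability conditions holds: the identification
\[
\mathrm{I}_{\Q_p}^K\bigl(\D_{\st,K}(V|_{G_L})\bigr)=\D_{\st,\Q_p}(V|_{G_L}),
\]
together with the equivalence of admissibility, forces $\D_{\st,K}(V|_{G_L})$ to have full rank over $L_0\otimes_{K_0}E$ if and only if $\D_{\st,\Q_p}(V|_{G_L})$ has full rank over $L_0\otimes_{\Q_p}E$. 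The implication $(4)\Rightarrow(2)$ is the standard analog of ``potentially semistable implies de Rham'': from the $K$-comparison isomorphism for $V|_{G_L}$, inverting $t_K$ and passing to $\B_{\dR}$ yields $\D_{\dR,K}(V|_{G_L})$ of full rank over $L\otimes_K E$, and $\Gal(L/K)$-descent then gives $\D_{\dR,K}(V)$ of full rank over $E$.

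The remaining implication $(2)\Rightarrow(1)$ will use an embedding decomposition. After enlarging $E$ if necessary to contain $K^{\Gal}$, the isomorphism $K\otimes_{\Q_p}E\cong\prod_{\tau\in\Sigma_K}E$ induces a $\B_{\dR}$-semilinear $G_K$-equivariant splitting
\[
\B_{\dR}\otimes_{\Q_p}V\;\cong\;\bigoplus_{\tau\in\Sigma_K}\B_{\dR}\otimes_K^\tau V.
\]
Taking $G_K$-invariants, the summand at $\tau=\Id$ is exactly $\D_{\dR,K}(V)$, which has full $E$-rank $d$ by hypothesis. For each $\tau\neq\Id$, the $K$-analyticity of $V$ forces $\C_p\otimes_K^\tau V$ to be trivial as a $\C_p$-semilinear $G_K$-representation; by Sen's theorem the $\tau$-twisted representation is then potentially unramified, hence de Rham, so $(\B_{\dR}\otimes_K^\tau V)^{G_K}$ also has full $E$-rank $d$. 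Summing over $\tau$ gives $\D_{\dR,\Q_p}(V)$ of full rank over $K\otimes_{\Q_p}E$, which is exactly the statement that $V$ is de Rham.

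The hard part will be this last step, specifically confirming that the triviality of $\C_p\otimes_K^\tau V$ for $\tau\neq\Id$ propagates to full-rank de Rham-ness of $\B_{\dR}\otimes_K^\tau V$. This should reduce to the classical principle that a $\C_p$-admissible $G_K$-representation becomes unramified after a finite extension and is therefore crystalline, hence de Rham; the only thing to check is that this argument is compatible with the $\tau$-twisted $K$-module structure appearing in the embedding decomposition.
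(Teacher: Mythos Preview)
Your proposal is correct and follows essentially the same approach as the paper: the equivalences $(1)\Leftrightarrow(3)$ and $(3)\Leftrightarrow(4)$ are handled identically, and your embedding decomposition argument for $(2)\Rightarrow(1)$ is exactly the paper's proof of $(1)\Leftrightarrow(2)$. The only differences are cosmetic: the paper proves $(1)\Leftrightarrow(2)$ symmetrically in one stroke (the $\tau\neq\Id$ summands always have full rank, so $\dim_K\D_{\dR}(V)=\dim_{\Q_p}V$ iff $\dim_K\D_{\dR,K}(V)=\dim_K V$) rather than routing one direction through $(4)\Rightarrow(2)$, and it does not enlarge $E$ --- the decomposition comes from $K\otimes_{\Q_p}\B_{\dR}\cong\prod_\tau\B_{\dR}$ rather than from $K\otimes_{\Q_p}E$.
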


\begin{proof}
The equivalence between (1) and (3) is the $p$-adic monodromy theorem,
which is Theorem 0.7 of \cite{Be02}. The equivalence between (3)
and (4) follows from Proposition 3.9. It remains to prove that (1)
and (2) are equivalent. Indeed, we have $\D_{\dR}(V)\cong\oplus_{\tau\in\Sigma_{K}}\D_{\dR,K}(V^{\tau})$,
with $V^{\tau}$ being the $\tau$-twist of $V$. Since $V$ is assumed
$K$-analytic, the representation $V^{\tau}$ is $\C_{p}$-admissible
for $\tau\neq\Id$, so it also $K$-de Rham. This implies that $\dim_{K}\D_{\dR,K}(V^{\tau})=d$,
so $\dim_{K}\D_{\dR}(V)=\dim_{\Q_{p}}V$ if and only if $\dim_{K}\D_{\dR,K}(V)=\dim_{K}V$,
as required.
\end{proof}
We conclude with a lemma that will be used in the proof of Theorem
6.8. 
\begin{lem}
Suppose that $V\in\mathrm{Rep}_{E}(G_{K})$ is $K$-analytic and that
$L=K$, and let $\alpha\in E^{\times}$. Then $\D_{\cris,\Q_{p}}(V)^{\varphi_{q}=\alpha}=\left(K_{0}\otimes_{\Q_{p}}E\right)\left[\varphi\right]\otimes_{E\left[\varphi_{q}\right]}\D_{\cris,K}(V){}^{\varphi_{q}=\alpha}.$
\end{lem}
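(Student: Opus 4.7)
The plan is to deduce the lemma from Proposition 3.9 applied in the crystalline setting. Applied to $V$, that proposition yields a natural isomorphism
\[
\D_{\cris,\Q_p}(V) \cong (K_0 \otimes_{\Q_p} E)[\varphi] \otimes_{E[\varphi_q]} \D_{\cris,K}(V)
\]
(the crystalline statement following from the stated semistable one by restricting to the $N = 0$ part, since the induction functor $\mathrm{I}_{\Q_p}^K$ is exact and sends $N = 0$ to $N = 0$). The equality in the lemma then becomes a statement about the $\varphi_q$-eigenspaces on each side of this isomorphism.

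The key observation is that $\varphi_q = \varphi^f$ is central in the twisted polynomial ring $(K_0 \otimes_{\Q_p} E)[\varphi]$: indeed $\varphi$ acts on $K_0 \otimes_{\Q_p} E$ via the Frobenius $\sigma$ on $K_0$ and trivially on $E$, and $\sigma^f = \Id$ since $K_0/\Q_p$ is unramified of degree $f$. Combining centrality with the tensor relations over $E[\varphi_q]$, the $\varphi_q$-action on the right-hand side is diagonal in the second factor:
\[
\varphi_q(x \otimes d) = (\varphi^f \cdot x) \otimes d = (x \cdot \varphi^f) \otimes d = x \otimes \varphi_q(d).
\]
Moreover, $(K_0 \otimes_{\Q_p} E)[\varphi]$ is flat, in fact free, as a right $E[\varphi_q]$-module: it is free over its central subring $(K_0 \otimes_{\Q_p} E)[\varphi_q]$ on the basis $\{1, \varphi, \ldots, \varphi^{f-1}\}$, and the central subring $(K_0 \otimes_{\Q_p} E)[\varphi_q] \cong (K_0 \otimes_{\Q_p} E) \otimes_E E[\varphi_q]$ is itself free over $E[\varphi_q]$ on any $E$-basis of $K_0 \otimes_{\Q_p} E$.

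Given these observations, the conclusion is formal. Applying the exact functor $(K_0 \otimes_{\Q_p} E)[\varphi] \otimes_{E[\varphi_q]} -$ to the left exact sequence
\[
0 \rightarrow \D_{\cris,K}(V)^{\varphi_q = \alpha} \rightarrow \D_{\cris,K}(V) \xrightarrow{\varphi_q - \alpha} \D_{\cris,K}(V)
\]
and invoking the diagonal description of $\varphi_q$ identifies $(K_0 \otimes_{\Q_p} E)[\varphi] \otimes_{E[\varphi_q]} \D_{\cris,K}(V)^{\varphi_q = \alpha}$ with the kernel of $\varphi_q - \alpha$ on $\D_{\cris,\Q_p}(V)$, which is by definition $\D_{\cris,\Q_p}(V)^{\varphi_q = \alpha}$. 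I do not foresee a serious obstacle: the whole argument reduces to the centrality of $\varphi^f$ and the standard commutation of a flat tensor with kernels.
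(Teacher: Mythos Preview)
Your proposal is correct and follows essentially the same approach as the paper. Both arguments reduce to the observation that $\varphi_{q}=\varphi^{f}$ is central in $(K_{0}\otimes_{\Q_{p}}E)[\varphi]$ and that the latter is free as a right $E[\varphi_{q}]$-module, so that the induction commutes with taking the $\varphi_{q}=\alpha$ eigenspace; the paper phrases this via the explicit direct sum decomposition $\oplus_{i=0}^{f-1}(K_{0}\otimes_{\Q_{p}}E)\varphi^{i}\otimes\D$, while you phrase it as flatness applied to the kernel sequence, and you are a bit more explicit about passing from the semistable statement of Proposition~3.9 to the crystalline one via $N=0$.
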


\begin{proof}
In general, if $\D$ is a filtered $\varphi_{q}$-module over $E$,
then $\mathrm{I}_{\Q_{p}}^{K}(\D)^{\varphi_{q}=\alpha}=\left(K_{0}\otimes_{\Q_{p}}E\right)\left[\varphi\right]\otimes_{E\left[\varphi_{q}\right]}\D{}^{\varphi_{q}=\alpha}$
because the $\varphi_{q}$ action is $\left(K_{0}\otimes_{\Q_{p}}E\right)\left[\varphi\right]$-linear.
Explicitly, there is a decomposition of $E\left[\varphi_{q}\right]$-modules
\[
\mathrm{I}_{\Q_{p}}^{K}(\D)=\left(K_{0}\otimes_{\Q_{p}}E\right)\left[\varphi\right]\otimes_{E\left[\varphi_{q}\right]}\D=\oplus_{i=0}^{f-1}\left(K_{0}\otimes_{\Q_{p}}E\right)\varphi^{i}\otimes\D,
\]
and taking $\varphi_{q}=\alpha$ of both sides yields the desired
equality. The lemma now follows from Proposition 3.9.
\end{proof}

\section{Big period rings}

In this section, we recall the big period rings which will be used
in the proofs of $\mathsection5$. The most important result of this
section is the determination of pro-$K$-analytic vectors in Theorem
4.6.

If $\mathcal{F}$ is the formal $\mathcal{O}_{K}$-module associated
to $\pi$ as in $\mathsection1.2$, we choose a coordinate $T$ for
$\mathcal{F}$ so that for $a\in\mathcal{O}_{K}$ we have a power
series $[a]=[a](T)$ corresponding to the action of $a$ on $\mathcal{F}$.
For $n\geq0$ we choose elements $u_{n}\in\mathcal{O}_{\C_{p}}$ such
that $u_{0}=0$, $u_{1}\neq0$ and $[\pi](u_{n})=u_{n-1}$.

\subsection{The rings $\widetilde{\protect\B}_{\protect\rig}^{\dagger}$ and
$\widetilde{\protect\B}_{\log}^{\dagger}$}

This subsection provides ramified counterparts for the constructions
given in $\mathsection2$ of \cite{Be02} in the case $K=K_{0}$.
Some of the content of this subsection can also be found in $\mathsection3$
of \cite{Be16}, though our notation is slightly different in parts.
Recall the notations from $\mathsection1.2$ and set $\mathcal{O}_{\C_{p}^{\flat}}=\lim\left(\mathcal{O}_{\C_{p}}/\pi\xleftarrow{x\mapsto x^{q}}\mathcal{O}_{\C_{p}}/\pi\xleftarrow{x\mapsto x^{q}}...\right)$.
We equip $\mathcal{O}_{\C_{p}^{\flat}}$ with the valuation $\left|\left(\overline{x}_{n}\right)_{n\geq0}\right|=\underset{n\rightarrow\infty}{\lim}\left|x_{n}\right|^{q^{n}}$
where $x_{n}\in\mathcal{O}_{\C_{p}}$ is a lift of $\overline{x}_{n}$.
Denote by $\widetilde{\A}_{0}^{+}$, $\widetilde{\A}^{+}$ respectively
the rings $W\left(\mathcal{O}_{\C_{p}^{\flat}}\right)$ and $\widetilde{\A}_{0}^{+}\otimes_{\mathcal{O}_{K_{0}}}\mathcal{O}_{K}$.
Then $\overline{u}=\left(\overline{u}_{n}\right)_{n\geq0}$ lies in
$\in\mathcal{O}_{\C_{p}^{\flat}}$, and by $\mathsection8$ of \cite{Co02}
there exists an element $u\in\widetilde{\A}^{+}$ which lifts $\overline{u}$
and which satisfies $\varphi_{q}(u)=[\pi](u)$ and $g(u)=[\chi_{\pi}(g)](u)$
for $g\in\Gamma_{K}$.

Let $\varpi\in\mathcal{O}_{\C_{p}^{\flat}}$ be any element with $|\varpi|=p^{-p/p-1}$.
Given $r,s\in\Z_{\geq0}[1/p]$ with $r\leq s$, and given $\mathrm{A}\in\left\{ \widetilde{\A}_{0},\widetilde{\A}\right\} $,
we set
\[
\mathrm{A}^{\left[r,s\right]}=\mathrm{A}^{+}\left\langle \frac{p}{\left[\varpi\right]^{r}},\frac{\left[\varpi\right]^{s}}{p}\right\rangle ,
\]
the completion of $\mathrm{A}^{+}\left[\frac{p}{\left[\varpi\right]^{r}},\frac{\left[\varpi\right]^{s}}{p}\right]$
with respect to the $\left(p,\left[\varpi\right]\right)$-adic topology.\footnote{In some references the completion is taken with respect to the $p$-adic
topology, but this makes no difference because $p$ divides a power
of $\left[\varpi\right]$.} We write $\widetilde{\B}_{0}^{[r,s]}=\widetilde{\A}_{0}^{[r,s]}[1/p]$
and $\widetilde{\B}^{[r,s]}=\widetilde{\A}^{[r,s]}[1/\pi]$.
\begin{lem}
1. $\widetilde{\A}_{0}^{[r,s]}\otimes_{\mathcal{O}_{K_{0}}}\mathcal{O}_{K}=\widetilde{\A}^{[r,s]}$.

2. $\widetilde{\B}_{0}^{[r,s]}\otimes_{K_{0}}K=\widetilde{\B}^{[r,s]}.$
\end{lem}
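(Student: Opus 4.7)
The plan is to exploit the fact that $K/K_{0}$ is a (totally ramified) finite extension, so $\mathcal{O}_{K}$ is a finite free module over $\mathcal{O}_{K_{0}}$ of rank $e$. Under such hypotheses the formation of the $(p,[\varpi])$-adic completion commutes with the base change from $\mathcal{O}_{K_{0}}$ to $\mathcal{O}_{K}$; both parts of the lemma then follow almost formally.

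For part (1), set $C=\widetilde{\A}_{0}^{+}\bigl[\tfrac{p}{[\varpi]^{r}},\tfrac{[\varpi]^{s}}{p}\bigr]$. Since $\widetilde{\A}^{+}=\widetilde{\A}_{0}^{+}\otimes_{\mathcal{O}_{K_{0}}}\mathcal{O}_{K}$ by definition, we have $\widetilde{\A}^{+}\bigl[\tfrac{p}{[\varpi]^{r}},\tfrac{[\varpi]^{s}}{p}\bigr]=C\otimes_{\mathcal{O}_{K_{0}}}\mathcal{O}_{K}$. Choosing an $\mathcal{O}_{K_{0}}$-basis of $\mathcal{O}_{K}$ identifies the latter with $C^{e}$ as a topological $\mathcal{O}_{K_{0}}$-module, with the $(p,[\varpi])$-adic topology corresponding to the product of the $(p,[\varpi])$-adic topologies on the factors. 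The $(p,[\varpi])$-adic completion therefore commutes with the tensor product:
\[
\widetilde{\A}^{[r,s]}=\widehat{C\otimes_{\mathcal{O}_{K_{0}}}\mathcal{O}_{K}}=\widehat{C}\otimes_{\mathcal{O}_{K_{0}}}\mathcal{O}_{K}=\widetilde{\A}_{0}^{[r,s]}\otimes_{\mathcal{O}_{K_{0}}}\mathcal{O}_{K}.
\]

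For part (2), observe that because $\pi$ is a uniformizer of $\mathcal{O}_{K}$, an Eisenstein relation gives $\pi^{e}=pu$ for some unit $u\in\mathcal{O}_{K}^{\times}$. Consequently, in any $\mathcal{O}_{K}$-algebra, inverting $\pi$ is the same as inverting $p$. Thus
\[
\widetilde{\B}^{[r,s]}=\widetilde{\A}^{[r,s]}[1/\pi]=\widetilde{\A}^{[r,s]}[1/p]=\bigl(\widetilde{\A}_{0}^{[r,s]}\otimes_{\mathcal{O}_{K_{0}}}\mathcal{O}_{K}\bigr)[1/p]=\widetilde{\B}_{0}^{[r,s]}\otimes_{\mathcal{O}_{K_{0}}}\mathcal{O}_{K},
\]
using part (1) and the fact that localization commutes with tensor product. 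Since $\widetilde{\B}_{0}^{[r,s]}$ is a $K_{0}$-algebra and $K_{0}\otimes_{\mathcal{O}_{K_{0}}}\mathcal{O}_{K}=K$, the right-hand side equals $\widetilde{\B}_{0}^{[r,s]}\otimes_{K_{0}}K$, which is the desired identification.

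There is really no substantial obstacle: the statement is of a formal/topological nature, and the only non-trivial input is that $\mathcal{O}_{K}$ is a finite free $\mathcal{O}_{K_{0}}$-module, which is automatic as $K/K_{0}$ is a finite extension with $\mathcal{O}_{K_{0}}$ a complete discrete valuation ring. One could imagine worrying about whether the $(p,[\varpi])$-adic and $p$-adic completions of $C$ agree, but as noted in the footnote this is harmless since $p$ divides a power of $[\varpi]$ once $p/[\varpi]^{r}$ is adjoined, so completing with respect to either topology produces the same ring.
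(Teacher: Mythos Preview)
Your proof is correct and follows essentially the same approach as the paper: both use that $\mathcal{O}_{K}$ is finite free of rank $e$ over $\mathcal{O}_{K_{0}}$ to write the uncompleted ring as a direct sum $\oplus_{i=0}^{e-1}\pi^{i}\widetilde{\A}_{0}^{+}\bigl[\tfrac{p}{[\varpi]^{r}},\tfrac{[\varpi]^{s}}{p}\bigr]$, then observe that $(p,[\varpi])$-adic completion commutes with finite direct sums. Your treatment of part (2) is slightly more explicit than the paper's (which simply says ``(2) follows from (1)''), but the content is the same.
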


\begin{proof}
(2) follows from (1). To prove (1), we write
\[
\widetilde{\A}^{+}\left[\frac{p}{\left[\varpi\right]^{r}},\frac{\left[\varpi\right]^{s}}{p}\right]=\oplus_{i=0}^{e-1}\pi^{i}\widetilde{\A}_{0}^{+}\left[\frac{p}{\left[\varpi\right]^{r}},\frac{\left[\varpi\right]^{s}}{p}\right]
\]
as $\widetilde{\A}_{0}^{+}$-modules. Now take the $\left(p,\left[\varpi\right]\right)$-adic
completion of both sides.
\end{proof}
We denote by $\widetilde{\B}_{\rig,0}^{\dagger,r}$,$\widetilde{\B}_{\rig}^{\dagger,r}$,
$\widetilde{\B}_{\rig,0}^{\dagger}$ and $\widetilde{\B}_{\rig}^{\dagger}$
respectively the rings $\cap_{r\leq s}\widetilde{\B}_{0}^{[r,s]},\cap_{r\leq s}\widetilde{\B}^{[r,s]},\cup_{r>0}\widetilde{\B}_{\rig,0}^{\dagger,r}$
and $\cup_{r>0}\widetilde{\B}_{\rig}^{\dagger,r}$. The $\varphi$
and $G_{K}$ actions on $\widetilde{\A}_{0}^{+}$ (resp. the $\varphi_{q}$
and $G_{K}$ actions on $\widetilde{\A}^{+}$) extend to $\widetilde{\B}_{\rig,0}^{\dagger}$
(resp. to $\widetilde{\B}_{\rig}^{\dagger}$). The following is proved
in Proposition 2.23 of \cite{Be02} in the case $K=\Q_{p}$, but the
same proof works in the general case.
\begin{prop}
There exists a unique map $\log:\widetilde{\A}^{+}\rightarrow\widetilde{\B}_{\rig}^{\dagger}$$\left[X\right]$
satisfying $\log(\pi)=0$, $\log\left[\overline{u}\right]=X$, $\log\left[x\right]=0$
for $x\in\overline{\mathbb{F}}_{q}$ and $\log(xy)=\log(x)+\log(y)$,
such that if $\left[x\right]-1$ is sufficiently close to $1$, we
have
\[
\log\left[x\right]=\sum_{n\geq1}\left(-1\right)^{n-1}\frac{\left(\left[x\right]-1\right)^{n}}{n}.
\]

Moreover, if $x\in\mathcal{O}_{\C_{p}^{\flat}}^{\times}$ then $\log\left[x\right]\in\widetilde{\B}_{\rig}^{\dagger}$. 
\end{prop}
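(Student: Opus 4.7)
The plan is to follow Berger's strategy from Proposition 2.23 of \cite{Be02}, adapting it to the ramified Lubin-Tate setting.

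First, for uniqueness: any element on which $\log$ is to be defined admits a multiplicative decomposition into $\pi$, Teichmüller lifts $[\omega]$ with $\omega \in \overline{\mathbb{F}}_q$, the element $[\overline{u}]$, and Teichmüller lifts $[y]$ with $y \in 1 + \mathfrak{m}_{\C_p^\flat}$. The values of $\log$ on the first three types of factors are prescribed by the four axioms, while the value on $[y]$ for $y$ close enough to $1$ is prescribed by the convergent series axiom. This forces $\log$ to be uniquely determined, once it exists.

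For existence, the central task is to construct $\log[y]$ for arbitrary $y \in 1 + \mathfrak{m}_{\C_p^\flat}$. When $y - 1$ has sufficiently large valuation in $\mathcal{O}_{\C_p^\flat}$, a suitably high power of $[\varpi]$ divides $[y] - 1$, so for each pair $r \leq s$ the series $\sum_{n \geq 1}(-1)^{n-1}([y]-1)^n/n$ converges in $\widetilde{\B}^{[r,s]}$ and hence defines an element of $\widetilde{\B}_{\rig}^{\dagger}$. For arbitrary $y \in 1 + \mathfrak{m}_{\C_p^\flat}$, I will exploit that $\mathcal{O}_{\C_p^\flat}$ has characteristic $p$: the identity $y^{q^n} - 1 = (y-1)^{q^n}$ shows that $y^{q^n}$ eventually lies in the range of convergence, so I can set $\log[y] := q^{-n}\log[y^{q^n}]$ for $n$ large. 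The compatibility of this definition with respect to the choice of $n$ follows from the multiplicativity of the Teichmüller lift. Once $\log$ is defined on Teichmüller lifts of $1 + \mathfrak{m}_{\C_p^\flat}$, I will extend it to $\mathcal{O}_{\C_p^\flat}^{\times}$ via the decomposition $x = \omega_{\flat}(\bar{x}) \cdot y$ with $\omega_{\flat}$ the multiplicative section of the residue map and $y \equiv 1$, and then to the full domain using the relations $\log(\pi) = 0$ and $\log[\overline{u}] = X$.

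The main obstacle is the convergence estimate: one must verify that the $n$-th term of the logarithm series tends to $0$ in each Banach ring $\widetilde{\B}^{[r,s]}$, which requires a careful comparison between the valuation of $y - 1$ in $\mathcal{O}_{\C_p^\flat}$ and the divisibility of $[y] - 1$ by powers of $[\varpi]$ in $\widetilde{\A}^{+}$. Once this estimate is in place, the multiplicativity axiom follows from the formal identity $\log((1+a)(1+b)) = \log(1+a) + \log(1+b)$ together with a passage to the limit, while the other axioms are immediate from the construction. For the ``moreover'' claim, note that any unit $x \in \mathcal{O}_{\C_p^\flat}^{\times}$ decomposes as $\omega_{\flat}(\bar{x}) \cdot y$ with $y \equiv 1$, and neither factor involves $\pi$ or $\overline{u}$ in its decomposition, so $\log[x]$ lies in the $X$-constant subring $\widetilde{\B}_{\rig}^{\dagger} \subset \widetilde{\B}_{\rig}^{\dagger}[X]$.
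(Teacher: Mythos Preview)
The paper does not give its own proof of this proposition; it simply states that ``the following is proved in Proposition 2.23 of \cite{Be02} in the case $K=\Q_{p}$, but the same proof works in the general case.'' Your proposal explicitly follows Berger's strategy from that proposition, so your approach is by construction the same as the one the paper invokes.

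One small imprecision in your uniqueness sketch: not every nonzero element of $\widetilde{\A}^{+}$ literally factors as a product of $\pi$, $[\overline{u}]$, Teichm\"uller lifts from $\overline{\mathbb{F}}_q$, and Teichm\"uller lifts of principal units. For a general $a\in\widetilde{\A}^{+}\setminus\{0\}$ one writes $a=\pi^{k}[x_0](1+\pi b)$ with $x_0\in\mathcal{O}_{\C_p^\flat}\setminus\{0\}$, and since the value group of $\C_p^\flat$ is $\mathbb{Q}$ one has $x_0^{m}=\overline{u}^{\,n}\cdot(\text{unit})$ for suitable integers $m,n$; the value of $\log$ is then forced by dividing by $m$. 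Your existence argument already implicitly handles this via the $q^{-n}$ trick, and the same device works for uniqueness. With that caveat, your outline is correct and matches the cited argument.
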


Write $p^{\flat}$ and $\pi^{\flat}$ for the elements $\left(\overline{p},\overline{p^{1/q}},...\right)$
and $\left(\overline{\pi},\overline{\pi^{1/q}},...\right)$ of $\mathcal{O}_{\C_{p}^{\flat}}$.
We set $\widetilde{\B}_{\log,0}^{\dagger}=\widetilde{\B}_{\rig,0}^{\dagger}\left[\log\left[p^{\flat}\right]\right]$
and $\widetilde{\B}_{\log}^{\dagger}=\widetilde{\B}_{\rig}^{\dagger}\left[\log\left[\pi^{\flat}\right]\right]$.
Since $u/\left[\overline{u}\right]\equiv1\mod\pi$, we have $\log\left(u/\left[\overline{u}\right]\right)\in\widetilde{\B}_{\rig}^{\dagger}$;
on the other hand, $\overline{u}^{q-1}/\left(\pi^{\flat}\right)^{q}$
is a unit of $\mathcal{O}_{\C_{p}^{\flat}}^{\times}$, so $\log\left[\overline{u}\right]^{q-1}/\left[\pi^{\flat}\right]^{q}\in\widetilde{\B}_{\rig}^{\dagger}$
as well. Combining these two observations, we see that in $\widetilde{\B}_{\log}^{\dagger}$
we have
\[
q\log\left[\pi^{\flat}\right]=(q-1)\log u-(q-1)\log\left(u/\left[\overline{u}\right]\right)-\log\left[\overline{u}\right]^{q-1}/\left[\pi^{\flat}\right]^{q}
\]
\[
\equiv(q-1)\log u\mod\widetilde{\B}_{\rig}^{\dagger},
\]
 so we also have $\widetilde{\B}_{\log}^{\dagger}=\widetilde{\B}_{\rig}^{\dagger}\left[\log u\right]$.

The $\varphi$ (resp. $\varphi_{q}$) action on $\widetilde{\B}_{\rig,0}^{\dagger}$
(resp. on $\widetilde{\B}_{\rig}^{\dagger}$) extends to $\widetilde{\B}_{\log,0}^{\dagger}$
(resp. to $\widetilde{\B}_{\log}^{\dagger}$) by setting $\varphi(\log\left[p^{\flat}\right])=p\log\left[p^{\flat}\right]$
and $g(\log\left[p^{\flat}\right])=\log\left[g\left(p^{\flat}\right)\right]$
(resp. $\varphi_{q}(\log\left[\pi^{\flat}\right])=q\log\left[\pi^{\flat}\right]$
and $g(\log\left[\pi^{\flat}\right])=\log\left[g\left(\pi^{\flat}\right)\right]$).
We have a monodromy operator $N$ which acts on $\widetilde{\B}_{\log,0}^{\dagger}$
(resp. on $\widetilde{\B}_{\log}^{\dagger}$) by $-\frac{d}{d\log\left[p^{\flat}\right]}$
(resp. by $-\frac{1}{e}\frac{d}{d\log\left[\pi^{\flat}\right]}$),
and $N\varphi=p\varphi N$ (resp. $N\varphi_{q}=q\varphi_{q}N$).
\begin{prop}
1. $\widetilde{\B}_{\rig,0}^{\dagger}\otimes_{K_{0}}K=\widetilde{\B}_{\rig}^{\dagger}$.

2. $\widetilde{\B}_{\log,0}^{\dagger}\otimes_{K_{0}}K=\widetilde{\B}_{\log}^{\dagger}$.
\end{prop}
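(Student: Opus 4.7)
The plan is to deduce (1) directly from Lemma 4.1 by commuting the finite base change $-\otimes_{K_{0}}K$ past the intersection over $s$ and the union over $r$ that define $\widetilde{\B}_{\rig,0}^{\dagger}$. Since $K$ is totally ramified of degree $e$ over $K_{0}$, we have a $K_{0}$-decomposition $K=\oplus_{i=0}^{e-1}K_{0}\pi^{i}$, so that $-\otimes_{K_{0}}K$ is the functor $M\mapsto\oplus_{i=0}^{e-1}M\pi^{i}$. Viewing all $\widetilde{\B}_{0}^{[r,s]}$ as $K_{0}$-submodules of a common ambient algebra (e.g. the fraction field of $W(\mathcal{O}_{\C_{p}^{\flat}})$), finite direct sum commutes with intersection of submodules and with filtered colimits. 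Hence for each fixed $r>0$,
\[
\widetilde{\B}_{\rig,0}^{\dagger,r}\otimes_{K_{0}}K=\bigcap_{s\geq r}\bigl(\widetilde{\B}_{0}^{[r,s]}\otimes_{K_{0}}K\bigr)=\bigcap_{s\geq r}\widetilde{\B}^{[r,s]}=\widetilde{\B}_{\rig}^{\dagger,r},
\]
and then taking $\bigcup_{r>0}$ on both sides yields (1).

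For (2), I combine (1) with the observation that $\log[p^{\flat}]$ is transcendental over $\widetilde{\B}_{\rig,0}^{\dagger}$ (by construction of $\log$ in Proposition 4.2), so that tensoring the polynomial ring $\widetilde{\B}_{\log,0}^{\dagger}=\widetilde{\B}_{\rig,0}^{\dagger}[\log[p^{\flat}]]$ with $K$ over $K_{0}$ gives $\widetilde{\B}_{\log,0}^{\dagger}\otimes_{K_{0}}K=\widetilde{\B}_{\rig}^{\dagger}[\log[p^{\flat}]]$. It then remains to identify this with $\widetilde{\B}_{\log}^{\dagger}=\widetilde{\B}_{\rig}^{\dagger}[\log[\pi^{\flat}]]$. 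The key point is that $\pi^{e}/p\in\mathcal{O}_{K}^{\times}$, which forces $(\pi^{\flat})^{e}/p^{\flat}$ to be a unit in $\mathcal{O}_{\C_{p}^{\flat}}^{\times}$ (all its components are units, being compatible $q^{n}$-th roots of $\overline{\pi^{e}/p}$). By the last assertion of Proposition 4.2, $\log[(\pi^{\flat})^{e}/p^{\flat}]\in\widetilde{\B}_{\rig}^{\dagger}$. Additivity of $\log$ then gives
\[
e\log[\pi^{\flat}]-\log[p^{\flat}]=\log\bigl[(\pi^{\flat})^{e}/p^{\flat}\bigr]\in\widetilde{\B}_{\rig}^{\dagger},
\]
so each of $\log[p^{\flat}]$ and $\log[\pi^{\flat}]$ lies in the $\widetilde{\B}_{\rig}^{\dagger}$-algebra generated by the other, and hence $\widetilde{\B}_{\rig}^{\dagger}[\log[p^{\flat}]]=\widetilde{\B}_{\rig}^{\dagger}[\log[\pi^{\flat}]]=\widetilde{\B}_{\log}^{\dagger}$, as required.

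The only delicate step is the commutation of intersection with $-\otimes_{K_{0}}K$ in (1); this is purely formal once one fixes a common ambient $K_{0}$-module, and is the sole place where the fact that $[K:K_{0}]=e<\infty$ is used. Part (2) is then a short algebraic manipulation using only Proposition 4.2 and the relationship between the two logarithms $\log[p^{\flat}]$ and $\log[\pi^{\flat}]$.
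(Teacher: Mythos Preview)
Your proof is correct and follows essentially the same approach as the paper's. For (1) you commute the finite free base change $-\otimes_{K_{0}}K$ with the intersection and union using Lemma 4.1, exactly as the paper does; for (2) you establish $e\log[\pi^{\flat}]\equiv\log[p^{\flat}]\bmod\widetilde{\B}_{\rig}^{\dagger}$ by observing that $(\pi^{\flat})^{e}/p^{\flat}\in\mathcal{O}_{\C_{p}^{\flat}}^{\times}$ and invoking Proposition 4.2, which is precisely the paper's argument (the paper phrases it by writing $\pi^{e}=pv$ and choosing a compatible system $v^{\flat}$, but this $v^{\flat}$ is your $(\pi^{\flat})^{e}/p^{\flat}$).
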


\begin{proof}
For $r\leq s$ we have by Lemma 4.1 that $\widetilde{\B}_{0}^{[r,s]}\otimes_{K_{0}}K=\widetilde{\B}^{[r,s]}$.
As $K$ is finite free over $K_{0}$, this implies 

\[
\begin{aligned}\widetilde{\B}_{\rig,0}^{\dagger}\otimes_{K_{0}}K & =\left(\cap_{r\leq s}\widetilde{\B}_{0}^{[r,s]}\right)\otimes_{K_{0}}K\\
 & =\cap_{r\leq s}\left(\widetilde{\B}_{0}^{[r,s]}\otimes_{K_{0}}K\right)\\
 & =\cap_{r\leq s}\widetilde{\B}^{[r,s]}\\
 & =\widetilde{\B}_{\rig}^{\dagger}.
\end{aligned}
\]
For (2), we write $\pi^{e}=pv$ with $v\in\mathcal{O}_{K}^{\times}$.
We can find $v^{\flat}=\left(\overline{v},\overline{v^{1/q}},...\right)\in\mathcal{O}_{\C_{p}^{\flat}}$
such that $\left[\pi^{\flat}\right]^{e}=\left[p^{\flat}\right]\left[v^{\flat}\right]$,
so $e\log\left[\pi^{\flat}\right]\equiv\log\left[p^{\flat}\right]\mod\widetilde{\B}_{\rig}^{\dagger}$,
and

\[
\begin{aligned}\widetilde{\B}_{\log}^{\dagger} & =\widetilde{\B}_{\rig}^{\dagger}\left[\log\left[p^{\flat}\right]\right]\\
 & =\left(\widetilde{\B}_{\rig,0}^{\dagger}\otimes_{K_{0}}K\right)\left[\log\left[p^{\flat}\right]\right]\\
 & =\widetilde{\B}_{\log,0}^{\dagger}\otimes_{K_{0}}K.
\end{aligned}
\]
.
\end{proof}

\subsection{Pro $K$-analytic vectors}

Let $\B_{\rig,K}^{\dagger}$ be the Robba ring, i.e. the ring of power
series $f(T)=\sum_{k\in\Z}a_{k}T^{k}$ with $a_{k}\in K$ and such
that $f(T)$ converges on some nonempty annulus $r<|T|<1$. The ring
$\B_{\rig,K}^{\dagger}$ can be viewed as a subring of $\widetilde{\B}_{\rig,K}^{\dagger}=\left(\widetilde{\B}_{\rig}^{\dagger}\right)^{H_{K}}$
by identifying $T$ with the element $u$ of $\mathsection4.1$. It
has induced $\varphi_{q}$ and $\Gamma_{K}$ actions.

Recall the following result (Theorem B of \cite{Be16}), which determines
the ring of pro $K$-analytic vectors in $\widetilde{\B}_{\rig,K}^{\dagger}$.
\begin{thm}
$\left(\widetilde{\B}_{\rig,K}^{\dagger}\right)^{K-\pa}=\cup_{n\geq0}\varphi_{q}^{-n}\left(\B_{\rig,K}^{\dagger}\right).$
\end{thm}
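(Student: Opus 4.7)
The plan is to prove the two inclusions separately, with the easy direction handled by direct Taylor expansion on the Lubin-Tate side and the hard direction by a descent argument that mimics, at each level of the Fréchet structure, the proof of Theorem 2.3.

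For the inclusion $\cup_{n\geq 0}\varphi_{q}^{-n}(\B_{\rig,K}^{\dagger}) \subseteq (\widetilde{\B}_{\rig,K}^{\dagger})^{K-\pa}$: since $\varphi_q$ commutes with $\Gamma_K$ and the Fréchet topology on $\widetilde{\B}_{\rig,K}^{\dagger,r}$ is that of the projective limit over $s \geq r$ of Banach norms on $\widetilde{\B}^{[r,s]}$, it is enough to show that elements of $\B_{\rig,K}^{\dagger}$ are pro $K$-analytic. An element $f(u) \in \B_{\rig,K}^{\dagger}$ satisfies $g(f(u)) = f([\chi_\pi(g)](u))$ for $g \in \Gamma_K$. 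Writing $\chi_\pi(g) = \exp_{\mathcal{F}}(l(g))$ for $g$ close enough to $1$, the power series $[\chi_\pi(g)](T)$ depends $K$-analytically on $l(g)$, so one gets a convergent expansion $g(f(u)) = \sum_k l(g)^k f_k$ with $f_k \in \B_{\rig,K}^{\dagger}$ and $f_k \to 0$ in each Banach norm, which is precisely pro $K$-analyticity. The action of a non-identity embedding $\tau \in \Sigma_K$ on $l(g)$ does not enter, so $\nabla_\tau$ annihilates $f(u)$ as required.

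For the harder inclusion $(\widetilde{\B}_{\rig,K}^{\dagger})^{K-\pa} \subseteq \cup_{n\geq 0}\varphi_{q}^{-n}(\B_{\rig,K}^{\dagger})$, let $x \in (\widetilde{\B}_{\rig,K}^{\dagger,r})^{K-\pa}$. The strategy is to show, level by level in the Fréchet structure, that $x$ is well approximated by elements of $\varphi_q^{-n}(\B^{[r',s']}_K)$ for appropriate parameters, and then assemble these approximations. Concretely, I would:
\begin{enumerate}
\item Establish an analogue of Proposition 2.6 at the level of the Robba-type rings: determine $(\widetilde{\B}^{[r,s],H_K})^{K-\la}$ in terms of an explicit ring, by combining almost étale descent for $H_K$ with the description of $K$-locally analytic vectors via vanishing of the operators $\nabla_\tau$ for $\tau \neq \Id$.
\item Use the element $u \in \widetilde{\A}^+$, whose $\Gamma_K$-orbit is governed by $[\chi_\pi(g)](u)$, as a coordinate: show that $\nabla(u)$ equals (up to a unit in $\B_{\rig,K}^{\dagger}$) a distinguished element, so that $\nabla$ is identified with a differential operator of the form $a(u) \tfrac{d}{du}$ on the locally analytic vectors.
\item Perform a Taylor-series style descent analogous to the surjectivity argument in the proof of Theorem 2.3: given $x$ pro $K$-analytic, iterate the differential operator and take averages to produce, on each $[r,s]$-level, an element of $\varphi_q^{-n}(\B^{[r',s']}_K)$ (with $n$ depending on how deep one descends to kill the perfectoid ambiguity), and verify that these are compatible as $s$ grows.
\end{enumerate}

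The main obstacle is step (1), namely the precise identification of $K$-locally analytic vectors inside the Banach algebras $\widetilde{\B}^{[r,s],H_K}$. Unlike the situation in $\B_{\dR}^+/t_K^\ell$ (Proposition 2.6), where the $H_K$-invariants form a module over $K_\infty[t_K]/t_K^\ell$ and the $K$-analyticity can be read off term by term, the rings $\widetilde{\B}^{[r,s]}$ are genuine perfectoid-type Banach algebras, and the overconvergent subring $\B^{[r',s']}_K$ only emerges after applying some $\varphi_q^{-n}$. Overcoming this will require a Tate-Sen-style approximation lemma adapted to the Lubin-Tate setting, replacing the cyclotomic normalized traces of Cherbonnier--Colmez (which are unavailable here) by the differential machinery coming from $\nabla_\tau$ and the $K$-locally analytic structure; this is exactly the point where the failure of the Tate-Sen axioms mentioned in the introduction must be circumvented.
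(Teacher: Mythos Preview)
The paper does not prove this statement: Theorem 4.4 is introduced with the phrase ``Recall the following result (Theorem B of \cite{Be16})'' and is simply quoted from Berger's paper \emph{Multivariable $(\varphi,\Gamma)$-modules and locally analytic vectors} without argument. There is thus no proof in the present paper to compare your proposal against.

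Regarding your sketch on its own terms: the easy inclusion is correct as written. For the hard inclusion you have correctly isolated the crux, namely the computation of $K$-locally analytic vectors in the Banach pieces $\widetilde{\B}_K^{[r,s]}$, and you are honest that your outline becomes programmatic at exactly this point. One warning, however: your step (3), modelled on the surjectivity half of Theorem 2.3, is unlikely to succeed in this form. That argument works because $W_{+,l}$ is finite over $K_\infty$, so the Taylor reconstruction $z=\sum_i y_i(x-x_n)^i$ converges for elementary reasons; here $\widetilde{\B}_K^{[r,s]}$ is infinite-dimensional over any candidate base and an iterated-$\nabla$ series has no a priori reason to land in $\varphi_q^{-n}(\B_K^{[r',s']})$. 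Berger's actual argument in \cite{Be16} proceeds instead via bounded $\Gamma_K$-equivariant normalized trace maps onto $\varphi_q^{-n}(\B_K^{[r',s']})$ --- these do exist on the overconvergent rings, even though the Tate--Sen formalism fails on $\widehat{K}_\infty$ --- combined with norm estimates on $\gamma-1$; local analyticity is then used to force $x$ into the image of a suitable trace. The missing ingredient in your outline is precisely this decompletion mechanism, not a differential one.
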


On the other hand, we can also write $\widetilde{\B}_{\log,K}^{\dagger}=\left(\widetilde{\B}_{\log}^{\dagger}\right)^{H_{K}}$.
The goal of this subsection is to obtain an analogous result for $\widetilde{\B}_{\log,K}^{\dagger}$.
\begin{prop}
We have $\log u\in\left(\widetilde{\B}_{\log,K}^{\dagger}\right)^{K-\pa}$.
\end{prop}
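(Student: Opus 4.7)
The plan is to compute the action of $\Gamma_K$ on $\log u$ explicitly and show that it admits a pro-$K$-analytic expansion. Since $g(u) = [\chi_\pi(g)](u)$ for $g \in \Gamma_K$ and $\log$ is multiplicative,
\[
g(\log u) - \log u = \log\bigl([\chi_\pi(g)](u)/u\bigr).
\]
For $a \in \mathcal{O}_K^\times$, factor $[a](T) = aT \cdot E_a(T)$ with $E_a(T) \in 1 + T\mathcal{O}_K[[T]]$; setting $l(g) = \log\chi_\pi(g)$, one obtains
\[
g(\log u) = \log u + l(g) + \log E_{\chi_\pi(g)}(u).
\]

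First I would verify that $\log E_a(u) \in \B_{\rig,K}^\dagger$. Since $E_a(T) - 1 \in T\mathcal{O}_K[[T]]$ and $u$ has norm $<1$ on each Banach piece of $\B_{\rig,K}^\dagger$, the series $\sum_{j \geq 1}(-1)^{j-1}(E_a(u) - 1)^j/j$ converges there. By Theorem~4.5 we deduce that $\log E_a(u) \in (\widetilde{\B}_{\rig,K}^\dagger)^{K-\pa} \subset (\widetilde{\B}_{\log,K}^\dagger)^{K-\pa}$. Next, I would argue that $a \mapsto [a](T)$ is a $K$-analytic map $\mathcal{O}_K^\times \to \mathcal{O}_K[[T]]$: this follows from the Lubin-Tate identity $[a](T) = \exp_{\mathcal{F}}(a\log_{\mathcal{F}}(T))$ and convergence of $\exp_{\mathcal{F}}$, or directly from the formal group identity $[a] = [1] +_{\mathcal{F}} [a-1]$. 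This yields an expansion $\log E_a(u) = \sum_{k \geq 1} c_k (a-1)^k$ with $c_k \in \B_{\rig,K}^\dagger$. Substituting $a = \chi_\pi(g) = \exp(l(g))$ for $g$ in a small neighborhood of the identity and reorganizing, one obtains for $g \in \Gamma_{K_n}$ with $n$ large an expansion
\[
g(\log u) = \log u + \sum_{k \geq 0} w_k\, l(g)^k, \qquad w_k \in \B_{\rig,K}^\dagger \subset \bigl(\widetilde{\B}_{\log,K}^\dagger\bigr)^{K-\pa},
\]
which is precisely the condition showing $\log u \in (\widetilde{\B}_{\log,K}^\dagger)^{K-\pa}$.

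The main obstacle is verifying the convergence of this double expansion in each Banach constituent of the LF ring $\widetilde{\B}_{\log,K}^\dagger$, i.e.\ that $\pi^{nk} w_k \to 0$ in the appropriate seminorms for $n$ sufficiently large. This reduces to tracking analyticity estimates on the coefficients of $[a](T)$ as an analytic function of $a$, together with the geometric decay of the $c_k$ that comes from analyticity of $a \mapsto E_a$ on a small disk in $\mathcal{O}_K^\times$.
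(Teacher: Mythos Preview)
Your approach is essentially the same as the paper's: both start from $g(\log u)=\log u+\log([a](u)/u)$ with $a=\chi_\pi(g)$ and reduce the problem to showing this difference is a $K$-analytic function of $g$ in each Banach piece $\widetilde{\B}_K^{[r_s,r_t]}$. The paper simply organizes the expansion dually to yours, writing $\log([a](u)/u)=\sum_{n\ge1}d_n(a)u^n$ and bounding the $\mathrm{LA}_t$-norms of the coefficient functions $d_n$, rather than expanding in $(a-1)$ and bounding the Banach norms of the resulting coefficients.

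However, what you flag as ``the main obstacle'' is in fact the entire content of the paper's proof, and it is not routine. The paper obtains the needed estimate via two lemmas: writing $[a](T)=\sum c_n(a)T^n$, one first bounds $\|c_n\|_{\mathrm{LA}_t}\le|\pi|^{-n/(q^t(q-1))}$ by expanding $c_n$ in de~Shalit's Mahler basis $\{g_m\}$ for $\mathcal{O}_K$ (the point being that $c_n$ has degree $\le n$ in $a$ with integral coefficients, and the Mahler basis has precisely controlled $\mathrm{LA}_t$-norms). A naive bound on a degree-$n$ polynomial with integral coefficients would give only $\|c_n\|_{\mathrm{LA}_t}\le|\pi|^{-tn}$, which is far too weak: balancing against $\|u^n\|_{[r_s,r_t]}=|\pi|^{n/(q^{t-1}(q-1))}$ then fails. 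The Mahler-basis bound is exactly sharp enough to make the balancing work (with a factor of $q^2$ to spare). Your appeals to the identity $[a](T)=\exp_{\mathcal F}(a\log_{\mathcal F}(T))$ or to the formal group law do not by themselves yield this quantitative control; you would still need something equivalent to the Mahler-basis estimate.

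Two minor points. First, your reference to ``Theorem~4.5'' is circular (that is the proposition you are proving); you presumably mean Theorem~4.4. Second, the sentence ``By Theorem~4.5 we deduce that $\log E_a(u)\in(\widetilde{\B}_{\rig,K}^\dagger)^{K-\pa}$'' conflates two things: that $\log E_a(u)$ lies in $\B_{\rig,K}^\dagger$ for each fixed $a$ says nothing about the $K$-analyticity of the orbit map $g\mapsto g(\log u)$, which is what pro-$K$-analyticity of $\log u$ means.
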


Before we give a proof of Proposition 4.5, we record the following
consequence. Let $\B_{\log,K}^{\dagger}=\B_{\rig,K}^{\dagger}\left[\log T\right]$,
thought of as a subring of $\widetilde{\B}_{\log,K}^{\dagger}$. The
$\varphi_{q}$ action on $\log T$ is given by $\varphi_{q}(\log T)=q\log T+\log\left(\left[\pi\right](T)/T^{q}\right)$,
where $\log\left(\left[\pi\right](T)/T^{q}\right)\in\B_{\rig,K}^{\dagger}$.
\begin{thm}
$\left(\widetilde{\B}_{\log,K}^{\dagger}\right)^{K-\pa}=\cup_{n\geq0}\varphi_{q}^{-n}\left(\B_{\log,K}^{\dagger}\right)$.
\end{thm}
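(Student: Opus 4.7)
The plan is to bootstrap Theorem 4.4 from $\widetilde{\B}_{\rig,K}^{\dagger}$ to $\widetilde{\B}_{\log,K}^{\dagger}$ by exploiting the polynomial structure $\widetilde{\B}_{\log,K}^{\dagger} = \widetilde{\B}_{\rig,K}^{\dagger}[\log u]$ and using the monodromy operator $N$ to strip off the $\log u$-coefficients one at a time. The inclusion $\supseteq$ is the easy direction: pro-$K$-analytic vectors form a subring, so combining Theorem 4.4 with Proposition 4.5 gives $\B_{\log,K}^{\dagger} = \B_{\rig,K}^{\dagger}[\log u] \subset (\widetilde{\B}_{\log,K}^{\dagger})^{K-\pa}$, and since $\varphi_q$ commutes with the $\Gamma_K$-action, the same is true after applying any $\varphi_q^{-n}$.

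For the inclusion $\subseteq$, fix $x \in (\widetilde{\B}_{\log,K}^{\dagger})^{K-\pa}$ and write $x = \sum_{i=0}^{d} a_i (\log u)^i$ with $a_i \in \widetilde{\B}_{\rig,K}^{\dagger}$, which is unique because $\log u$ is transcendental over $\widetilde{\B}_{\rig,K}^{\dagger}$ (any algebraic relation would be destroyed by enough applications of $N$, since $N(\log u) \in K^{\times}$ while $N$ kills $\widetilde{\B}_{\rig,K}^{\dagger}$). The crucial ingredient is that $N$ preserves the subspace of pro-$K$-analytic vectors: this holds because $N$ commutes with the $\Gamma_K$-action (from $g(\log[\pi^{\flat}]) - \log[\pi^{\flat}] \in \widetilde{\B}_{\rig,K}^{\dagger} = \ker N$) and interacts compatibly with the Lie-theoretic derivative $\nabla$. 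Since $N^{d}(x) = c \cdot a_d$ for some $c \in K^{\times}$, it follows that $a_d \in (\widetilde{\B}_{\rig,K}^{\dagger})^{K-\pa}$, and then Theorem 4.4 gives $a_d \in \varphi_q^{-n_d}(\B_{\rig,K}^{\dagger})$. Subtracting $a_d (\log u)^d$ from $x$ produces a pro-$K$-analytic element of strictly smaller degree in $\log u$, and induction on $d$ yields $a_i \in \varphi_q^{-n_i}(\B_{\rig,K}^{\dagger})$ for each $i$. Setting $n = \max_i n_i$ and using the identity $\varphi_q(\log u) = q \log u + \log\bigl([\pi](u)/u^{q}\bigr) \in \B_{\log,K}^{\dagger}$ inductively to conclude $\varphi_q^{n}(\log u) \in \B_{\log,K}^{\dagger}$, we obtain $\varphi_q^{n}(x) \in \B_{\log,K}^{\dagger}$, i.e. $x \in \varphi_q^{-n}(\B_{\log,K}^{\dagger})$.

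The main obstacle is verifying the stability of $(\widetilde{\B}_{\log,K}^{\dagger})^{K-\pa}$ under $N$. Granting this, everything else is a clean coefficient-by-coefficient unfolding of Theorem 4.4. This stability is the analogue of the corresponding step in \cite{Be02} for $K = \Q_p$, and one expects it to reduce either to a continuity statement about $N$ with respect to an appropriate LF-structure on $\widetilde{\B}_{\log,K}^{\dagger}$, or alternatively to a direct check that if $g(x) = \sum_{k \geq 0} l(g)^{k} w_k$ with $w_k \to 0$ and $g \in \Gamma_{K_n}$ for $n \gg 0$, then $g(N(x)) = \sum_{k \geq 0} l(g)^{k} N(w_k)$ gives a valid pro-$K$-analytic expansion for $N(x)$.
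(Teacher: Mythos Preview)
Your proposal is correct but takes a genuinely different route from the paper. The paper avoids the monodromy operator $N$ altogether: once Proposition~4.5 is in hand, the elements $1,\log u,\ldots,(\log u)^d$ form a $\widetilde{\B}_{\rig,K}^{\dagger}$-basis of the $\Gamma_K$-stable submodule $\bigoplus_{i=0}^{d}\widetilde{\B}_{\rig,K}^{\dagger}(\log u)^{i}$ on which the action is pro-$K$-analytic, so Lemma~2.1 applies directly and gives
\[
\Bigl(\bigoplus_{i=0}^{d}\widetilde{\B}_{\rig,K}^{\dagger}(\log u)^{i}\Bigr)^{K-\pa}
=\bigoplus_{i=0}^{d}\bigl(\widetilde{\B}_{\rig,K}^{\dagger}\bigr)^{K-\pa}(\log u)^{i}
=\bigoplus_{i=0}^{d}\Bigl(\bigcup_{n\ge 0}\varphi_q^{-n}(\B_{\rig,K}^{\dagger})\Bigr)(\log u)^{i},
\]
the last equality being Theorem~4.4. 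Passing to the colimit over $d$ yields $(\widetilde{\B}_{\log,K}^{\dagger})^{K-\pa}=\bigl(\bigcup_n\varphi_q^{-n}(\B_{\rig,K}^{\dagger})\bigr)[\log u]$, and a short minimal-degree argument then identifies this with $\bigcup_n\varphi_q^{-n}(\B_{\log,K}^{\dagger})$.

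What you do differently is replace the appeal to Lemma~2.1 by the observation that $N$ is a bounded $\Gamma_K$-equivariant operator on each Banach piece, hence preserves pro-$K$-analyticity; iterating $N$ then extracts the top coefficient $a_d$ and feeds it into Theorem~4.4. This is a perfectly valid alternative, and in a sense more self-contained since it does not invoke the general free-module principle of Lemma~2.1. The paper's route, on the other hand, is cleaner precisely because it packages the coefficient extraction into that lemma and never needs to think about continuity of $N$ or its commutation with $\Gamma_K$. Your final step---taking $n=\max_i n_i$ and using $\varphi_q(\log u)\in\B_{\log,K}^{\dagger}$ to conclude $\varphi_q^n(x)\in\B_{\log,K}^{\dagger}$---is exactly the ``obvious'' inclusion the paper cites, so the two arguments reconverge there.
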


\begin{proof}
Fix $d\geq0$. As $g\left(\log u\right)=\log u+\log\frac{g(u)}{u}$
for $g\in\Gamma_{K}$, the submodule $\oplus_{i=0}^{d}\widetilde{\B}_{\rig,K}^{\dagger}\cdot\left(\log u\right)^{i}$
is closed under the $\Gamma_{K}$-action. By Proposition 4.5, the
elements $1,\log u,...,\left(\log u\right)^{i}$ from a $\widetilde{\B}_{\rig,K}^{\dagger}$-basis
of this submodule for which the action is pro $K$-analytic. Combining
Lemma 2.1 and Theorem 4.4, we obtain

\[
\begin{aligned}\left(\oplus_{i=0}^{d}\widetilde{\B}_{\rig,K}^{\dagger}\cdot\left(\log u\right)^{i}\right)^{K-\pa} & =\oplus_{i=0}^{d}\left(\widetilde{\B}_{\rig,K}^{\dagger}\right)^{K-\pa}\cdot\left(\log u\right)^{i}\\
 & =\oplus_{i=0}^{d}\left(\cup_{n\geq0}\varphi_{q}^{-n}\left(\B_{\rig,K}^{\dagger}\right)\right)\left(\log u\right)^{i}.
\end{aligned}
\]
Taking the colimit as $d\rightarrow\infty$ shows that $\left(\widetilde{\B}_{\log,K}^{\dagger}\right)^{K-\pa}=\left(\cup_{n\geq0}\varphi_{q}^{-n}\left(\B_{\rig,K}^{\dagger}\right)\right)[\log u]$.
It remains to show that $\cup_{n\geq0}\varphi_{q}^{-n}\left(\B_{\log,K}^{\dagger}\right)$
is contained $\left(\cup_{n\geq0}\varphi_{q}^{-n}\left(\B_{\rig,K}^{\dagger}\right)\right)[\log u]$,
as the inclusion in the other direction is obvious. Assume the opposite
and let $f=\sum_{i=0}^{d}a_{i}\left(\log u\right)^{i}$ be an element
of $\varphi_{q}^{-n}\left(\B_{\log,K}^{\dagger}\right)$ with $a_{i}\in\widetilde{\B}_{\rig,K}^{\dagger}$
and $d$ minimal such that $f$ is not contained in $\left(\cup_{n\geq0}\varphi_{q}^{-n}\left(\B_{\rig,K}^{\dagger}\right)\right)[\log u]$.
As $\varphi_{q}^{n}\left(f\right)\in\B_{\log,K}^{\dagger}$ and $\varphi_{q}(\log u)\equiv q\log u$
$\mod\B_{\rig,K}^{\dagger}$, examining the coefficient of $\left(\log u\right)^{d}$
reveals that $\varphi_{q}^{n}(a_{d})\in\B_{\rig,K}^{\dagger}$, providing
a contradiction. 
\end{proof}
We now proceed to prove Proposition 4.5. We do so in several steps,
following the method appearing in $\mathsection4$ of \cite{Be16}.
If $t\geq1$, we denote by $\mathrm{LA}_{t}(\mathcal{O}_{K})$ the
space functions of $\mathcal{O}_{K}$ which are analytic on closed
discs of radius $|\pi|^{t}$. For $a\in\mathcal{O}_{K}$, write $[a](T)=\sum_{n\geq1}c_{n}(a)T^{n}$.
Each $c_{n}(a)$ is a polynomial of degree at most $n$ in $a$, and
$c_{n}(\mathcal{O}_{K})\subset\mathcal{\mathcal{O}}_{K}$.
\begin{lem}
$\left|\left|c_{n}\right|\right|_{\mathrm{LA}_{t}}\leq|\pi|^{-\frac{n}{q^{t}(q-1)}}$.
\end{lem}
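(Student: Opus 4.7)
The plan is to apply Cauchy's estimate to the power series $[a](T)=\sum_n c_n(a)T^n$ at $a=a_0+\pi^t b$, where $a_0\in \mathcal{O}_K$ and $b\in \mathcal{O}_{\C_p}$ has $|b|\le 1$, on a circle $|T|=|\pi|^s$ with $s$ slightly greater than $1/(q^t(q-1))$. The starting point will be the identity
\[
[a_0+\pi^t b](T)=F([a_0](T),[\pi^t b](T)),
\]
where $F\in \mathcal{O}_K[[X,Y]]$ is the formal group law of $\mathcal{F}$. This identity holds for $b\in \mathcal{O}_K$ by definition of the formal group law, and since both sides are polynomial in $b$ at each fixed power of $T$, it extends to arbitrary $b\in \mathcal{O}_{\C_p}$.

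On the circle $|T|=|\pi|^s$, I will show $|[a](T)|\le |\pi|^s$ by estimating each factor. Since $[a_0]\in \mathcal{O}_K[[T]]$ vanishes at $T=0$, one has $|[a_0](T)|\le |T|=|\pi|^s$. For the second factor I use $[\pi^t b](T)=\exp_{\mathcal{F}}(\pi^t b\log_{\mathcal{F}}(T))$ together with an analysis of the Newton polygon of $\log_{\mathcal{F}}$. The functional equation $\log_{\mathcal{F}}\circ [\pi^k]=\pi^k \log_{\mathcal{F}}$ combined with $\log_{\mathcal{F}}(T)=T+O(T^2)$ forces the Newton polygon of $\log_{\mathcal{F}}$ to have its $k$-th vertex at $(q^k,-k)$, so for $s$ slightly above $1/(q^t(q-1))$ the monomial of degree $q^t$ dominates and $|\log_{\mathcal{F}}(T)|=|\pi|^{sq^t-t}$. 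Since $sq^t>1/(q-1)$ in this range, $|\pi^t b\log_{\mathcal{F}}(T)|\le |\pi|^{sq^t}<|\pi|^{1/(q-1)}$, so $\exp_{\mathcal{F}}$ converges on this input with $|[\pi^t b](T)|\le |\pi|^{sq^t}$. Finally, $F(X,Y)-X-Y\in XY\cdot \mathcal{O}_K[[X,Y]]$, so for $|X|,|Y|\le 1$ the ultrametric gives $|F(X,Y)|\le \max(|X|,|Y|)$, and combining these estimates yields $|[a](T)|\le \max(|\pi|^s,|\pi|^{sq^t})=|\pi|^s$.

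Cauchy's estimate on $|T|=|\pi|^s$ then gives $|c_n(a)|\le |\pi|^{s(1-n)}$. Letting $s\downarrow 1/(q^t(q-1))$ produces $|c_n(a)|\le |\pi|^{(1-n)/(q^t(q-1))}\le |\pi|^{-n/(q^t(q-1))}$, and taking the supremum over $a_0$ and $b$ yields the stated bound on $\|c_n\|_{\mathrm{LA}_t}$. The main obstacle is the Newton polygon analysis of $\log_{\mathcal{F}}$: one has to verify that for $s$ just above $1/(q^t(q-1))$ it is the $k=t$ vertex which dominates, and this is precisely what produces the exponent $1/(q^t(q-1))$ in the bound.
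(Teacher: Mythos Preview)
Your proof is correct and takes a genuinely different route from the paper's. The paper's argument is purely algebraic: it invokes de Shalit's Mahler basis $\{g_n\}$ for $\mathrm{LA}_t(\mathcal{O}_K)$, uses that $c_n$ is a polynomial of degree $\leq n$ with $c_n(\mathcal{O}_K)\subset\mathcal{O}_K$ to expand $c_n=\sum_{i\le n} b_{n,i} g_i$ with $b_{n,i}\in\mathcal{O}_K$, and then reads off $\|c_n\|_{\mathrm{LA}_t}\le \max_{i\le n}\|g_i\|_{\mathrm{LA}_t}\le|\pi|^{-n/(q^t(q-1))}$ from de Shalit's norm computation $\|g_i\|_{\mathrm{LA}_t}=|\pi|^{-w_{i,t}}$ with $w_{i,t}\le i/(q^t(q-1))$.

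Your approach instead exploits the formal group law directly: the identity $[a_0+\pi^t b](T)=F([a_0](T),[\pi^t b](T))$, the Newton polygon of $\log_{\mathcal F}$ (vertices at $(q^k,-k)$), and a Cauchy estimate on $|T|=|\pi|^s$ with $s\downarrow 1/(q^t(q-1))$. This has two virtues. First, it is self-contained and avoids importing de Shalit's Mahler basis. Second, it makes transparent why the exponent $1/(q^t(q-1))$ appears: it is exactly the radius at which the $q^t$-th vertex of the Newton polygon of $\log_{\mathcal F}$ takes over, so that $|\pi^t b\,\log_{\mathcal F}(T)|$ just slips inside the domain of convergence $|X|<|\pi|^{1/(q-1)}$ of $\exp_{\mathcal F}$. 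You even obtain the slightly sharper bound $|\pi|^{(1-n)/(q^t(q-1))}$. The paper's method, by contrast, is shorter once the Mahler basis is in hand and uses only the abstract properties ``degree $\le n$'' and ``integer-valued on $\mathcal{O}_K$'', so it would apply to any such polynomial, not just $c_n$.
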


\begin{proof}
Recall that de Shalit has constructed in \cite{dS16} a Mahler basis
$\left\{ g_{n}(T)\right\} _{n\geq0}^{\infty}$ such that $g_{n}(T)$
is a polynomial in $K[T]$ of degree $n$ and and such that $||\pi^{w_{n,t}}g_{n}||_{\mathrm{LA}_{t}}=1$,
where $w_{n,t}$ is an integer satisfying $w_{n,t}\leq\frac{n}{q^{t}(q-1)}$.
As $c_{n}$ has degree at most $n$, we can write $c_{n}=\sum_{i=0}^{n}b_{n,i}g_{i}$
for some $b_{n,i}\in\mathcal{O}_{K}$, and so $\left|\left|c_{n}\right|\right|_{\mathrm{LA}_{t}}\leq\sup_{1\leq i\leq n}\left|\left|g_{i}\right|\right|\leq|\pi|^{-\frac{n}{q^{t}(q-1)}}$.
\end{proof}
Recall that $g(\log u)=\log u+\log\left(\frac{[a](u)}{u}\right)$,
where $a=\chi_{\pi}(g)$. We write 
\[
\log\left(\frac{[a](u)}{u}\right)=\sum_{n=1}^{\infty}d_{n}(a)u^{n}.
\]

\begin{lem}
$\left|\left|d_{n}\right|\right|_{\mathrm{LA}_{t}}\leq|\pi|^{-\frac{2n}{q^{t}(q-1)}+o(n)}$.
\end{lem}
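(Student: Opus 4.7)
The plan is to derive a closed-form expansion for $d_n(a)$ in terms of the coefficients $c_m(a)$ appearing in Lemma 4.7, then apply that lemma termwise.

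First I would rewrite $[a](T)/T = a + \sum_{m\geq 1} c_{m+1}(a) T^m = a\bigl(1 + y(T)/a\bigr)$, where $y(T) = \sum_{m\geq 1} c_{m+1}(a) T^m \in T\cdot\mathcal{O}_K[[T]]$. Since we only care about $d_n$ for $n \geq 1$, the constant $\log a$ plays no role, and I would use the standard expansion
\[
\log\Bigl(1 + \frac{y(T)}{a}\Bigr) = \sum_{k\geq 1} \frac{(-1)^{k-1}}{k\, a^k}\, y(T)^k,
\]
valid since $a\in\mathcal{O}_K^\times$ forces $y(T)/a \in T\cdot\mathcal{O}_K[[T]]$. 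Extracting the coefficient of $T^n$ gives
\[
d_n(a) = \sum_{k=1}^{n} \frac{(-1)^{k-1}}{k\, a^k} \sum_{\substack{m_1+\cdots+m_k = n \\ m_i\geq 1}} c_{m_1+1}(a)\cdots c_{m_k+1}(a).
\]

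Next I would bound each factor in the $\mathrm{LA}_t$ norm on $\mathcal{O}_K^\times$. The map $a\mapsto 1/a^k$ is analytic on any closed disc of radius $|\pi|^t$ contained in $\mathcal{O}_K^\times$; expanding $1/(a_0 + \pi^t x)^k$ for $a_0\in\mathcal{O}_K^\times$ as a geometric-type series in $x$ shows that $\|1/a^k\|_{\mathrm{LA}_t} \leq 1$. By Lemma 4.7,
\[
\prod_{i=1}^{k} \|c_{m_i+1}\|_{\mathrm{LA}_t} \leq |\pi|^{-\frac{1}{q^t(q-1)} \sum_{i=1}^{k}(m_i+1)} = |\pi|^{-\frac{n+k}{q^t(q-1)}} \leq |\pi|^{-\frac{2n}{q^t(q-1)}},
\]
where the last inequality uses $k \leq n$.

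Finally, I would take the non-archimedean supremum over the terms of $d_n(a)$: each contributes at most $|k|_p^{-1}\cdot |\pi|^{-2n/q^t(q-1)}$, and $|k|_p^{-1} = p^{v_p(k)} \leq k \leq n$. Since $n = |\pi|^{-O(\log n)} = |\pi|^{o(n)}$, this gives
\[
\|d_n\|_{\mathrm{LA}_t} \leq n\cdot|\pi|^{-\frac{2n}{q^t(q-1)}} = |\pi|^{-\frac{2n}{q^t(q-1)} + o(n)},
\]
proving the lemma. The only mildly delicate point is the uniform bound $\|1/a^k\|_{\mathrm{LA}_t}\leq 1$ on $\mathcal{O}_K^\times$, which is the main conceptual obstacle but follows routinely once one observes that any ball of radius $|\pi|^t$ meeting $\mathcal{O}_K^\times$ lies entirely inside it.
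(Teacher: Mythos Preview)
Your proof is correct, and follows the same overall pattern as the paper (expand the logarithm, bound each summand using Lemma~4.7, then control the remaining factor $1/k$), but with one genuine difference in setup. The paper expands $\log\bigl(1 + ([a](u)/u - 1)\bigr)$ directly; since the constant term $e_0(a)=a-1$ is nonzero, the resulting sum over $m$ is infinite, and the paper must control it via the bound $\|e_0\|_{\mathrm{LA}_t}\le |\pi|^t$ (which restricts the domain to a disc around $1$) together with the somewhat delicate estimate $|1/m|\cdot|\pi|^{t(m-h)}=|\pi|^{o(n)}$ uniformly in $m$. By contrast, you first factor out $a$, so that the argument of the logarithm has no constant term in $u$; this forces $k\le n$ and makes the sum finite, at the price of introducing the factor $1/a^k$, whose $\mathrm{LA}_t$-norm you correctly bound by $1$ on $\mathcal{O}_K^\times$. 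Your route is slightly cleaner and yields the bound on all of $\mathcal{O}_K^\times$ rather than only near $1$; either domain suffices for the application in Proposition~4.5.
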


\begin{proof}
Write $\frac{[a](u)}{u}-1=\sum_{n\geq0}e_{n}(a)u^{n}$, where $e_{0}(a)=a-1$
and $e_{n}(a)=c_{n+1}(a)$ for $n\geq1$. Then $d_{n}$ is a sum of
functions of the form
\[
\frac{\left(-1\right)^{m-1}}{m}\sum_{\substack{\left(k_{1},...,k_{m}\right)\in\Z_{\geq0}^{m}\\
k_{1}+...+k_{m}=n
}
}\prod_{i=1}^{m}e_{k_{i}},
\]
and it suffices to bound each such function by $|\pi|^{-\frac{2n}{q^{t}(q-1)}+o(n)}$,
where $o(n)$ does not depend on $m$.

Fix $\left(k_{1},...,k_{m}\right)\in\Z_{\geq0}^{m}$ with $k_{1}+...+k_{m}=n$.
Let $h$ be the number of $1\leq i\leq m$ such that $k_{i}\geq1$.
Then by Lemma 4.7 we have
\[
\left|\left|\prod_{i:1\leq k_{i}}e_{k_{i}}\right|\right|_{\mathrm{LA}_{t}}\leq\left|\pi\right|^{-\left(n+h\right)/q^{t}(q-1)}\leq|\pi|^{-2n/q^{t}(q-1)}.
\]
On the other hand, $\left|\left|e_{0}\right|\right|_{\mathrm{LA}_{t}}\leq\left|\pi\right|^{t}$,
so
\[
\left|\left|\frac{1}{m}\prod_{i:k_{i}=0}e_{k_{i}}\right|\right|_{\mathrm{LA}_{t}}\leq\left|\frac{1}{m}\right|\left|\pi\right|^{t(m-h)}\leq p^{\left[v_{p}(m)-t/e\max\left\{ 0,m-n\right\} \right]}=\left|\pi\right|^{o(n)}.
\]
Combining the two inequalities we obtain the claim.
\end{proof}
\emph{Proof of Proposition 4.5. }Write $r_{n}=p^{nf-1}(p-1)$ and
let $s\leq t$. It is enough to show that $\log u$ is $K$-analytic
on $\Gamma_{K_{t+1}}$ as a vector of $\widetilde{\B}_{K}^{[r_{s},r_{t}]}=\left(\widetilde{\B}^{[r_{l},r_{t}]}\right)^{H_{K}}$.
Since $g(\log u)=\log u+\log\left(\frac{[a](u)}{u}\right)$ for $a=\chi_{\pi}(g)$,
we need to verify that $\left|\left|d_{n}\right|\right|_{\mathrm{LA}_{t+1}}\left|\left|u^{n}\right|\right|_{[r_{s},r_{t}]}\rightarrow0$
as $n\rightarrow0$. By the maximum principle, we have $\left|\left|u\right|\right|_{[r_{s},r_{t}]}=\left|\left|u\right|\right|_{r_{t}}=\left|\pi\right|^{1/q^{t-1}(q-1)}$,
and so by Lemma 4.7
\[
\left|\left|d_{n}\right|\right|_{\mathrm{LA}_{t+1}}\left|\left|u^{n}\right|\right|_{[r_{s},r_{t}]}\leq|\pi|^{n\left[\frac{1}{q^{t-1}(q-1)}-\frac{2}{q^{t+1}(q-1)}\right]+o(1)},
\]
which approaches $0$, as required.$\hfill\ensuremath{\Box}$

\section{Lubin-Tate $(\varphi_{q},\Gamma_{K})$-modules }

In this section we recall how to attach a $(\varphi_{q},\Gamma_{K})$-module
over $\B_{\rig,K}^{\dagger}$ to a $K$-analytic $p$-adic representation
of $G_{K}$, and we express the invariants of $\mathsection3$ in
terms of these $\left(\varphi_{q},\Gamma_{K}\right)$-modules. Recall
from $\mathsection1.2$ that the field $E$ is a finite extension
of $K$ which serves as a field of coefficients.

\subsection{$K$-analytic $(\varphi_{q},\Gamma_{K})$-modules}

Let $\A_{K}$ be the ring of power series $f(T)=\sum_{k\in\Z}a_{k}T^{k}$
with $a_{k}\in\mathcal{O}_{K}$ such that $\val_{p}(a_{k})\rightarrow0$
as $k\rightarrow-\infty$, and let $\B_{K}=\A_{K}[1/\pi]$. The rings
$\A_{K}$ and $\B_{K}$ have a $\varphi_{q}$-action given by $\varphi_{q}(T)=\left[\pi\right](T)$
and a $\Gamma_{K}$-action given by $g(T)=\left[\chi_{\pi}(g)\right](T)$
for $g\in\Gamma_{K}$. These actions are $K$-linear, and we extend
them $E$-linearly to $\A_{K}\otimes_{K}E$ and $\B_{K}\otimes_{K}E$.
A $\left(\varphi_{q},\Gamma_{K}\right)$-module over $\B_{K}\otimes_{K}E$
is a finite free $\B_{K}\otimes_{K}E$ module $\D_{K}$ which has
commuting semilinear $\varphi_{q}$ and $\Gamma_{K}$ actions. We
say it is étale if there exists a basis of $\D_{K}$ for which $\Mat(\varphi)\in\GL_{d}(\A_{K}\otimes_{K}E)$. 

Kisin and Ren have shown in $\mathsection1$ of \cite{KR09} how to
associate to any $V\in\mathrm{Rep}_{E}(G_{K})$ an étale $\left(\varphi_{q},\Gamma_{K}\right)$-module
over $\B_{K}\otimes_{K}E$ which we denote $\D_{K}(V)$. Furthermore,
one has the following result.
\begin{thm}
The functor $V\mapsto\D_{K}(V)$ induces an equivalence of categories
\[
\left\{ E\text{-representations of }G_{K}\right\} \longleftrightarrow\left\{ \text{étale }\left(\varphi_{q},\Gamma_{K}\right)\text{-modules over }\B_{K}\otimes_{K}E\right\} .
\]
\end{thm}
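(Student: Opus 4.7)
The plan is to follow Fontaine's strategy from the cyclotomic case, adapted to the Lubin--Tate setting as carried out by Kisin and Ren in \cite{KR09}. The key is to build a large period ring $\mathrm{B}^{\mathrm{un}}_K$ that simultaneously trivializes the $G_K$-action and the $\varphi_q$-action, and then to play $H_K$-descent off against $\varphi_q$-descent.

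First I would construct the period ring. Inside $\widetilde{\B}$, one uses the embedding $\B_K \hookrightarrow \widetilde{\B}$ given by $T \mapsto u$, which presents $\B_K$ as (the Frobenius completion of) a two-dimensional local field in the variable $u$, equipped with compatible $\varphi_q$ and $\Gamma_K$ actions. Let $\mathrm{B}^{\mathrm{un}}_K$ denote the $p$-adic completion inside $\widetilde{\B}$ of the maximal unramified extension of $\B_K$. Two structural facts are needed: first, $(\mathrm{B}^{\mathrm{un}}_K)^{H_K} = \B_K$ and $\Gal(\mathrm{B}^{\mathrm{un}}_K / \B_K) \cong H_K$; second, $(\mathrm{B}^{\mathrm{un}}_K)^{\varphi_q = 1} = K$. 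The first statement rests on the Fontaine--Wintenberger identification of $H_K$ with the Galois group of the field of norms attached to the Lubin--Tate tower, in the form worked out in \cite{KR09}; the second is a direct Artin--Schreier computation with $\varphi_q$ on Witt vectors of the separable closure of the residue field of $\B_K$.

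Next I would write down the two candidate functors,
\[
V \longmapsto \D_K(V) := (\mathrm{B}^{\mathrm{un}}_K \otimes_K V)^{H_K}, \qquad \D \longmapsto V(\D) := (\mathrm{B}^{\mathrm{un}}_K \otimes_{\B_K} \D)^{\varphi_q = 1},
\]
with the inherited $\varphi_q$ and $\Gamma_K = G_K/H_K$ actions on the former and the inherited $G_K$-action on the latter. One verifies they are well-defined: étaleness of $\D_K(V)$ comes from descending an $H_K$-stable $\mathcal{O}_E$-lattice of $V$, while continuity of $V(\D)$ follows from the existence of a $\varphi_q$-stable $\A_K \otimes_{\mathcal{O}_K} \mathcal{O}_E$-lattice inside $\D$.

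The heart of the argument, and the step I expect to be the main obstacle, is the comparison isomorphism
\[
\mathrm{B}^{\mathrm{un}}_K \otimes_{\B_K} \D_K(V) \xrightarrow{\sim} \mathrm{B}^{\mathrm{un}}_K \otimes_K V
\]
of $(\varphi_q, G_K)$-modules, equivalently the vanishing of the non-abelian cohomology $H^1(H_K, \GL_d(\mathrm{B}^{\mathrm{un}}_K))$. I would attack this by first reducing modulo $\pi$ to a statement over the residue ring, where $\mathrm{B}^{\mathrm{un}}_K / \pi$ is the separable closure of a field of characteristic $p$ and ordinary Hilbert 90 applies, and then lifting mod $\pi^n$ by successive approximation; the étaleness hypothesis is precisely what permits the lift. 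Granted the comparison isomorphism, taking $\varphi_q = 1$-invariants recovers $V$ from $\D_K(V)$ because $(\mathrm{B}^{\mathrm{un}}_K)^{\varphi_q = 1} = K$, while taking $H_K$-invariants of the symmetric isomorphism $\mathrm{B}^{\mathrm{un}}_K \otimes_{\B_K} \D \cong \mathrm{B}^{\mathrm{un}}_K \otimes_E V(\D)$ recovers $\D$ from $V(\D)$ because $(\mathrm{B}^{\mathrm{un}}_K)^{H_K} = \B_K$. This yields the mutually inverse equivalence of categories.
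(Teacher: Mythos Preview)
Your proposal is correct and follows the standard Fontaine strategy as adapted to the Lubin--Tate tower by Kisin and Ren. The paper does not give its own proof of this theorem: it is stated as a known result and attributed to \S1 of \cite{KR09}, whose argument is precisely the one you outline.
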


Now let $\B_{K}^{\dagger}$ be the subring of $\B_{K}$ which consists
of power series which converge on some nonempty annulus $r\leq|T|<1$.
It is preserved by the $\left(\varphi_{q},\Gamma_{K}\right)$-structure,
and we say that a $\left(\varphi_{q},\Gamma_{K}\right)$-module over
$\B_{K}\otimes_{K}E$ is \emph{overconvergent }if $\D_{K}=\D_{K}^{\dagger}\otimes_{\B_{K}^{\dagger}}\B_{K}$
where $\D_{K}^{\dagger}$ is a $\left(\varphi_{q},\Gamma_{K}\right)$-module
over $\B_{K}^{\dagger}\otimes_{K}E$. A representation $V\in\mathrm{Rep}_{E}(G_{K})$
is said to be overconvergent if $\D_{K}(V)$ is. As $\B_{K}^{\dagger}$
is a subring of $\B_{\rig,K}^{\dagger}$, for such a $(\varphi_{q},\Gamma_{K})$-module
we can form $\D_{\rig,K}^{\dagger}=\D_{K}^{\dagger}\otimes_{\B_{K}^{\dagger}}\B_{\rig,K}^{\dagger}$
and $\D_{\log,K}^{\dagger}=\D_{K}^{\dagger}\otimes_{\B_{K}^{\dagger}}\B_{\log,K}^{\dagger}$. 

In the case $K=\Q_{p}$, Cherbonnier and Colmez have proven in \cite{CC98}
that $\D_{K}(V)$ is always overconvergent. Unfortunately, this is
no longer true whenever $K\neq\Q_{p}$ (see Theorem 0.6 of \cite{FX13}).
However, the analogue of the Cherbonnier-Colmez theorem does hold
for $K$-analytic representations, and, even better, we can characterize
the $(\varphi_{q},\Gamma_{K})$-modules which arise in this way. More
precisely, a $\left(\varphi_{q},\Gamma_{K}\right)$-module $\D_{\rig,K}^{\dagger}$
over $\B_{\rig,K}^{\dagger}\otimes_{K}E$ is called $K$-analytic
if $\D_{\rig,K}^{\dagger}=\left(\D_{\rig,K}^{\dagger}\right)^{K-\pa}$.
Then one has the following result (Theorems C and D of \cite{Be16}).
\begin{thm}
1. If $V\in\mathrm{Rep}_{E}(G_{K})$ is $K$-analytic, then $\D_{K}(V)$
is overconvergent.

2. The functor $V\mapsto\D_{\rig,K}^{\dagger}(V)$ gives an equivalence
of categories
\[
\left\{ K\text{-analytic }E\text{-linear representations of }G_{K}\right\} 
\]
\[
\longleftrightarrow\left\{ \text{étale }K\text{-analytic }\left(\varphi_{q},\Gamma_{K}\right)\text{-modules over }\B_{\rig,K}^{\dagger}\otimes_{K}E\right\} 
\]

3. If $V\in\mathrm{Rep}_{E}(G_{K})$ is $K$-analytic, then there
exists a natural $G_{K}$-equivariant isomorphism $\widetilde{\B}_{\rig}^{\dagger}\otimes_{K}V\cong\widetilde{\B}_{\rig}^{\dagger}\otimes_{\B_{\rig,K}^{\dagger}}\D_{\rig,K}^{\dagger}(V)$.
\end{thm}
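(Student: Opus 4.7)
The plan is to leverage the computation of pro-$K$-analytic vectors from Theorem 4.4 together with the descent techniques of Section 2.2 in order to pull representations from $\widetilde{\B}_{\rig}^{\dagger}$ down to the Robba ring $\B_{\rig,K}^{\dagger}$. The three parts of the theorem are best proved in the order (1) $\Rightarrow$ (3) $\Rightarrow$ (2).

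For part (1), given a $K$-analytic representation $V$ I would first form the auxiliary module
\[
\widetilde{\D}_{\rig,K}^{\dagger}(V) := \left(\widetilde{\B}_{\rig}^{\dagger}\otimes_{K}V\right)^{H_{K}},
\]
which is an étale $(\varphi_{q},\Gamma_{K})$-module over $\widetilde{\B}_{\rig,K}^{\dagger}$ (this uses the tilted analogue of the standard Hilbert 90 computation). The key step is to show that the $\Gamma_K$-action on $\widetilde{\D}_{\rig,K}^{\dagger}(V)$ becomes pro-$K$-analytic in a suitable basis; given the $K$-analyticity of $V$ and the pro-$K$-analyticity of scalars in $\widetilde{\B}_{\rig,K}^{\dagger}$ acted on trivially in the $V$-direction, this can be arranged after replacing a candidate basis by one that trivializes the $K$-direction derivatives $\nabla_\tau$ for $\tau\neq\mathrm{Id}$. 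Applying Lemma 2.1 then gives
\[
\widetilde{\D}_{\rig,K}^{\dagger}(V) = \widetilde{\B}_{\rig,K}^{\dagger}\otimes_{(\widetilde{\B}_{\rig,K}^{\dagger})^{K-\pa}}\widetilde{\D}_{\rig,K}^{\dagger}(V)^{K-\pa}.
\]
Combining this with Theorem 4.4, which identifies $(\widetilde{\B}_{\rig,K}^{\dagger})^{K-\pa}$ with $\cup_{n\geq 0}\varphi_{q}^{-n}(\B_{\rig,K}^{\dagger})$, and using that $\varphi_q$ is a bijection on the relevant rings, one obtains a $(\varphi_q,\Gamma_K)$-stable submodule $\D_{\rig,K}^{\dagger}(V)\subset\widetilde{\D}_{\rig,K}^{\dagger}(V)$ defined over $\B_{\rig,K}^{\dagger}$, with $\widetilde{\B}_{\rig}^{\dagger}\otimes_{\B_{\rig,K}^{\dagger}} \D_{\rig,K}^{\dagger}(V) \cong \widetilde{\B}_{\rig}^{\dagger}\otimes_K V$. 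This already yields part (3), and overconvergence (part (1)) then follows by intersecting $\D_{\rig,K}^{\dagger}(V)$ with the bounded subring attached to $\D_K(V)$.

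For part (2), full faithfulness is immediate from part (3) together with the classical equivalence of Theorem 5.1, since $V$ is recovered as $\left(\widetilde{\B}_{\rig}^{\dagger}\otimes_{\B_{\rig,K}^{\dagger}}\D_{\rig,K}^{\dagger}(V)\right)^{\varphi_q=1, H_K}$ (suitably interpreted). For essential surjectivity, given a $K$-analytic étale $(\varphi_q,\Gamma_K)$-module $\D$ over $\B_{\rig,K}^{\dagger}\otimes_K E$, I would form $\widetilde{\D} = \widetilde{\B}_{\rig}^{\dagger}\otimes_{\B_{\rig,K}^{\dagger}}\D$, descend this via the usual slope-theoretic machinery to an étale $(\varphi_q,\Gamma_K)$-module over $\B_K\otimes_K E$, and invoke Theorem 5.1 to obtain a representation $V_\D$. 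The verification that $\D_{\rig,K}^{\dagger}(V_\D)\cong\D$ is then another application of Theorem 4.4, combined with the $K$-analyticity hypothesis on $\D$ to guarantee that $(\widetilde{\D})^{K-\pa}$ recovers $\D$ (after a $\varphi_q^{-n}$-twist).

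The main obstacle will be the first step of (1): showing that $\widetilde{\D}_{\rig,K}^{\dagger}(V)$ admits a basis whose $\Gamma_K$-action is pro-$K$-analytic. The $K$-analyticity hypothesis on $V$ controls the action only on the $\C_p$-level (through the vanishing of $\nabla_\tau$ on $\D_{\Sen,K}(V)$ for $\tau\neq\mathrm{Id}$); transferring this to a statement over the Fréchet ring $\widetilde{\B}_{\rig}^{\dagger}$ requires gluing analyticity information across the levels $\widetilde{\B}^{[r,s]}$, and it is here that the pro-$K$-analyticity of $\log u$ established in Proposition 4.5, and more broadly the ring-theoretic analysis of Section 4, will do most of the heavy lifting.
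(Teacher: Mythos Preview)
This theorem is not proved in the paper: it is stated as Theorems C and D of \cite{Be16}, and the paper simply quotes the result without argument. So there is no ``paper's own proof'' to compare against.

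That said, your sketch is broadly the strategy Berger actually uses in \cite{Be16}: form $\widetilde{\D}_{\rig,K}^{\dagger}(V)=(\widetilde{\B}_{\rig}^{\dagger}\otimes_K V)^{H_K}$, produce a pro-$K$-analytic basis, and descend via the identification $(\widetilde{\B}_{\rig,K}^{\dagger})^{K-\pa}=\cup_{n\geq 0}\varphi_q^{-n}(\B_{\rig,K}^{\dagger})$ (Theorem 4.4 here, Theorem B of \cite{Be16}). One correction: your final paragraph points to Proposition 4.5 (the pro-$K$-analyticity of $\log u$) as the tool that handles the hard step. That is not right. Proposition 4.5 concerns $\widetilde{\B}_{\log,K}^{\dagger}$ and is used in this paper only for the semistable comparison in Theorem 5.5; it plays no role in the overconvergence theorem. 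In \cite{Be16} the existence of a pro-$K$-analytic basis is obtained instead by a Sen-type decompletion argument at each level $\widetilde{\B}^{[r,s]}$, lifting the $K$-analytic Sen basis of $\D_{\Sen,K}(V)$ and controlling the resulting cocycles; the input is Theorem 2.3 (or rather its analogue over the rings $\widetilde{\B}_K^{I}$), not anything about $\log u$.
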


All characters are overconvergent, so split $2$-dimensional representation
are always overconvergent. For nonsplit representations, Theorem 5.2
implies the following.
\begin{cor}
Let $V\in\mathrm{Rep}_{E}(G_{K})$ be a nonsplit $2$-dimensional
representation. The following are equivalent.

1. $V$ is overconvergent.

2. Either $V$ is $K$-analytic up to a character twist or $V$ is
an extension of the trivial representation by itself.
\end{cor}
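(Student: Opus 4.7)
The plan is to prove the two implications separately. The direction $(2)\Rightarrow(1)$ is the easier one: if $V\otimes\eta$ is $K$-analytic for some character $\eta$, then by Theorem 5.2(1) it is overconvergent, and since characters are overconvergent and tensor products of overconvergent $(\varphi_{q},\Gamma_{K})$-modules are overconvergent, tensoring back by $\eta^{-1}$ shows $V$ is overconvergent. If $V$ is instead a nonsplit extension of the trivial representation by itself, its isomorphism class is classified by a cohomology class $c\in\H^{1}(G_{K},E)$; I would construct an overconvergent model by producing an explicit extension of the trivial $\left(\varphi_{q},\Gamma_{K}\right)$-module $\B_{K}^{\dagger}\otimes_{K}E$ by itself whose associated Galois cocycle represents $c$. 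This reduces to checking that the natural map from $\H^{1}\bigl((\varphi_{q},\Gamma_{K}),\B_{K}^{\dagger}\otimes_{K}E\bigr)$ to $\H^{1}(G_{K},E)$ is surjective, which can be verified by comparing the Herr-type complexes and using that $E\hookrightarrow\B_{K}^{\dagger}$.

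For $(1)\Rightarrow(2)$, my approach is to extract from the overconvergent structure enough information about the $\tau$-Sen operators $\Theta_{\Sen}^{\tau}$ ($\tau\in\Sigma_{K}\setminus\{\Id\}$) to force the desired dichotomy. I would pass from $\D_{\rig,K}^{\dagger}(V)$ up to $\widetilde{\B}_{\rig}^{\dagger}\otimes_{\B_{\rig,K}^{\dagger}}\D_{\rig,K}^{\dagger}(V)$, take $H_{K}$-invariants and pro-$K$-analytic vectors, and use Theorem 4.4 to identify this with a union of $\varphi_{q}^{-n}$-twists of $\D_{\rig,K}^{\dagger}(V)$. Each differential $\nabla_{\tau}$ (for $\tau\neq\Id$) then acts $\B_{\rig,K}^{\dagger}$-linearly at this level, commutes with $\varphi_{q}$ up to scalar, and its eigenvalues compute the $\tau$-Hodge–Tate weights of $V$; in particular, overconvergence forces these eigenvalues to be integers.

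Because $V$ is nonsplit and $2$-dimensional, I expect a three-way case analysis for each $\tau\neq\Id$: either $\nabla_{\tau}$ is a scalar on $\D_{\rig,K}^{\dagger}(V)$; or it is semisimple with two distinct integer eigenvalues; or it is nilpotent but nonzero. In the first case, choose a character $\eta$ whose $\tau$-weights cancel those scalars simultaneously for all $\tau\neq\Id$; then $V\otimes\eta$ has vanishing $\nabla_{\tau}$ for every $\tau\neq\Id$, so $\D_{\rig,K}^{\dagger}(V\otimes\eta)$ is pro-$K$-analytic and $V\otimes\eta$ is $K$-analytic by Theorem 5.2(2). In the second case, the eigenspace decomposition of $\nabla_{\tau}$ is automatically $\varphi_{q}$- and $\Gamma_{K}$-stable, hence descends via Theorem 5.1 to a decomposition of $V$ itself, contradicting the nonsplit hypothesis. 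In the third case, both $\tau$-Hodge–Tate weights coincide, and the sub and quotient of $\D_{\rig,K}^{\dagger}(V)^{K-\pa}$ are rank-$1$ $K$-analytic $(\varphi_{q},\Gamma_{K})$-modules attached to two characters with identical Sen data at every $\tau\neq\Id$; after twisting by the inverse of one of them, $V$ becomes a nonsplit extension of the trivial representation by itself.

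The main obstacle will be the precise control of $\nabla_{\tau}$ on the pro-$K$-analytic vectors of $\D_{\rig,K}^{\dagger}(V)$ when $V$ is only assumed overconvergent, not $K$-analytic. In particular, showing that a non-scalar semisimple $\nabla_{\tau}$ genuinely produces a splitting of the $(\varphi_{q},\Gamma_{K})$-module $\D_{\rig,K}^{\dagger}(V)$ (rather than merely of its pro-$K$-analytic part or its completion) requires combining the structure results of Section 4 with a descent argument through Theorem 5.2(3), and this is where the argument is most delicate.
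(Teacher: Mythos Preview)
Your approach differs substantially from the paper's. For $(1)\Rightarrow(2)$ the paper does not analyze the $\nabla_\tau$ directly; it splits into the absolutely irreducible case, where it cites Corollary~4.3 of \cite{Be13} (itself proved by a $\nabla_\tau$-style argument of the kind you sketch), and the reducible nonsplit case, where one twists so that $V$ is an extension of $1$ by $E(\delta)$ and invokes Theorems~0.3 and~0.4 of \cite{FX13}. For $(2)\Rightarrow(1)$ in the self-extension case the paper likewise cites \cite{FX13} rather than computing a Herr complex.

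There is a genuine gap in your third case. If some $\nabla_{\tau_0}$ is nilpotent nonzero you correctly obtain a saturated rank-$1$ sub $L\subset\D_{\rig,K}^{\dagger}(V)$ with quotient $Q$, and commutativity of the $\nabla_\tau$ forces $L$ and $Q$ to have the same $\tau$-Sen weight for every $\tau\neq\Id$. But this does \emph{not} force the associated characters $\delta_1,\delta_2$ to agree: they may still differ in their $\varphi_q$-eigenvalue or in their $\Id$-weight. After your twist, $\D\otimes\eta$ is an extension of $K$-analytic $\delta_2$ by $K$-analytic $\delta_1$ which is itself \emph{not} $K$-analytic (the nilpotent $\nabla_{\tau_0}$ survives any scalar twist), and your conclusion ``$V$ becomes an extension of the trivial representation by itself'' is only valid when $\delta_1=\delta_2$. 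To exclude $\delta_1\neq\delta_2$ you would have to show that every overconvergent extension of two \emph{distinct} $K$-analytic rank-$1$ objects is already $K$-analytic; this is exactly the comparison of $\H^1_{\an}$ with the full overconvergent $\H^1$ established in \cite{FX13}, and it does not fall out of the linear algebra of the $\nabla_\tau$. Two smaller points: the eigenvalues of $\nabla_\tau$ are the $\tau$-Sen weights, which lie in $E$ but are not forced to be integers by overconvergence; and taking $\D_{\rig,K}^{\dagger}(V)^{K-\pa}$ when $V$ is not $K$-analytic typically does not yield a rank-$2$ module, so you should work directly on $\D_{\rig,K}^{\dagger}(V)$ with the $\B_{\rig,K}^{\dagger}$-linear operators $\nabla_\tau$ for $\tau\neq\Id$.
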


\begin{proof}
 If $V\left(\delta\right)$ is $K$-analytic then it is overconvergent
by Theorem 5.2. In addition, Theorem 0.3 of \cite{FX13} shows that
every extension of the trivial representation by itself is overconvergent,
so (2) implies (1). In the converse direction, let $V$ be an overconvergent
representation. It is either absolutely irreducible or reducible nonsplit
after possibly extending scalars.

\textbf{Case 1: }$V$ is absolutely irreducible. Then Corollary 4.3
of \cite{Be13} implies that $V\left(\delta\right)$ is $K$-analytic
for some character $\delta$. Corollary 4.3 of \cite{Be13} is proved
there in the setting where $K$ is an unramified extension of $\Q_{p}$;
it is a consequence of Theorem 4.2 of ibid. This assumption can be
removed, because Theorem 4.2 of ibid is reproven in \cite{Be16} without
assuming $K$ is unramified. 

\textbf{Case 2: }$V$ is reducible nonsplit after extending scalars.
By the following lemma, extending scalars does not matter for the
question of oveconvergence. Thus, we may assume $V$ is reducible
and nonsplit, and after performing a character twist we may further
assume it is an extension of $1$ by $E(\delta)$ with $\det\left(V\right)=\delta$.
If $\delta=1$, we are done. Otherwise, by Theorem 0.4 of \cite{FX13}
a nontrivial overconvergent extension of $1$ by $E\left(\delta\right)$
can only occur if $\delta$ is $K$-analytic, and since $\delta\neq1$
this implies that $V$ itself is $K$-analytic by Theorem 0.3 of \cite{FX13}.
\end{proof}
The following lemma was used in the proof of Corollary 5.3.
\begin{lem}
Let $V\in\mathrm{Rep}_{E}(G_{K})$ be a representation, and let $E'$
be a finite extension of $E$. Then $V$ is overconvergent if and
only if $V\otimes_{E}E'$ is overconvergent.
\end{lem}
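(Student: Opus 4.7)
The plan is to deduce the lemma from two formal ingredients: the compatibility of the functors $\D_K$ and $\D_K^\dagger$ with extension of the coefficient field $E \subset E'$, and the faithful flatness of $E'$ over $E$.

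First I would record that for any $V \in \mathrm{Rep}_E(G_K)$ there are natural isomorphisms
\[
\D_K(V \otimes_E E') \cong \D_K(V) \otimes_E E' \qquad \text{and} \qquad \D_K^\dagger(V \otimes_E E') \cong \D_K^\dagger(V) \otimes_E E',
\]
where, for arbitrary $V$, the submodule $\D_K^\dagger(V) \subset \D_K(V)$ is defined canonically as the intersection of $\D_K(V)$ with the corresponding overconvergent $H_K$-invariants inside the ambient tilde module (equivalently, as $(\widetilde{\B}_{\rig}^\dagger \otimes_{\Q_p} V)^{H_K} \cap \D_K(V)$). These compatibilities are formal: the Kisin--Ren functor $\D_K$ is built from a $\Q_p$-linear tensor construction on which $E$ and $E'$ appear only through the coefficients of $V$, and this scalar action commutes with the taking of $H_K$-invariants.

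With this in place, $V$ is overconvergent precisely when the natural comparison map
\[
\iota_V : \B_K \otimes_{\B_K^\dagger} \D_K^\dagger(V) \longrightarrow \D_K(V)
\]
is an isomorphism, and under the above identifications $\iota_{V \otimes_E E'}$ is identified with $\iota_V \otimes_E E'$. Since a finite extension of fields is faithfully flat, $\iota_V$ is an isomorphism if and only if $\iota_V \otimes_E E'$ is, and this gives both implications of the lemma simultaneously.

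I do not expect a serious obstacle here. The one routine point to verify is that $\D_K^\dagger(V)$ is canonically defined and functorial even when $V$ is not a priori known to be overconvergent, so that one is entitled to speak of $\iota_V$; this is standard and implicit in the constructions recalled in $\mathsection 5.1$.
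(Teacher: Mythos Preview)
Your argument is correct and gives a valid proof, but it follows a different route from the paper. The paper does not invoke a canonical $\D_K^\dagger(V)$ for arbitrary $V$; instead, for the nontrivial direction it first enlarges $E'$ so that $E'/E$ is Galois, forms $\D_{E'}=\D_{\rig,K}^\dagger(V\otimes_E E')$, and applies Galois descent to obtain an \'etale $(\varphi_q,\Gamma_K)$-module $\D=\D_{E'}^{\Gal(E'/E)}$ over $\B_{\rig,K}^\dagger\otimes_K E$. An inspection of the explicit inverse functor (as in \cite[Proposition~1.5]{FX13}) then shows that $\D$ corresponds to $V$, so $V$ is overconvergent.

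Your approach is more symmetric: once $\D_K^\dagger(-)$ is known to be canonically defined and to commute with $-\otimes_E E'$, faithful flatness of $E'/E$ handles both implications at once, with no need for the Galois reduction or the inverse functor. The paper's approach, by contrast, stays entirely within the definitions of \S5.1 and avoids ever discussing $\D_K^\dagger(V)$ for non-overconvergent $V$. One minor slip in your write-up: the intersection $(\widetilde{\B}_{\rig}^\dagger\otimes_{\Q_p}V)^{H_K}\cap\D_K(V)$ is not quite well-posed, since $\D_K(V)$ lives inside $(\widetilde{\B}\otimes V)^{H_K}$ and $\widetilde{\B}_{\rig}^\dagger$ is not a subring of $\widetilde{\B}$; the correct ambient object is the bounded overconvergent ring, e.g.\ one should take $\D_K^\dagger(V)=(\B^\dagger\otimes V)^{H_K}$. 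This does not affect the substance of your argument.
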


\begin{proof}
Clearly, if $V$ is overconvergent, so is $V\otimes_{E}E'$. This
being said, when proving the converse direction we are free to enlarge
$E'$, and in particular we may assume $E'/E$ is Galois. Write $\D_{E'}=\D_{\rig,K}^{\dagger}(V\otimes_{E}E')$,
which is an étale $(\varphi_{q},\Gamma_{K})$-module over $\B_{\rig,K}^{\dagger}\otimes_{K}E'$.
By Galois descent, $\D=\D_{E'}^{\Gal(E'/E)}$ is an étale $(\varphi_{q},\Gamma_{K})$-module
over $\B_{\rig,K}^{\dagger}\otimes_{K}E$. An explicit description
of the inverse functor to $\D_{\rig,K}^{\dagger}$ (provided for example
in Proposition 1.5 of \cite{FX13}) reveals that $\D$ corresponds
to $V$ under the equivalence of Theorem 5.2, because the $\Gal(E'/E)$-action
commutes with all the structure involved. This shows that $V$ is
overconvergent.
\end{proof}

\subsection{The modules $\protect\D_{*,K}$ and the extended dictionary}

Recall that for $n\geq0$ we set $r_{n}=p^{nf-1}(p-1)$. For $r>0$
we let $n(r)$ be the minimal $n$ such that $r_{n}\geq r$. If $I$
is a closed interval and $r_{0}=\frac{p-1}{p}\in I$, then for $\widetilde{\B}^{I}$
as in $\mathsection4$ the usual completion map $\widetilde{\A}^{+}\rightarrow\B_{\dR}^{+}$
extends to a map $\iota_{0}:\widetilde{\B}^{I}\rightarrow\B_{\dR}^{+}$.
More generally if $r_{n}\in I$ then one has the map $\iota_{n}=\iota_{0}\circ\varphi_{q}^{-n}:\widetilde{\B}^{I}\rightarrow\B_{\dR}^{+}$.
Now let $\B_{\rig,K}^{\dagger,r}=\widetilde{\B}_{\rig,K}^{\dagger,r}\cap\B_{\rig,K}^{\dagger}$,
then for $n\geq n(r)$ the map above restricts to give $\iota_{n}:\B_{\rig,K}^{\dagger,r}\rightarrow K_{n}\left[\left[t_{K}\right]\right]\subset\B_{\dR}$.
By Théorème I.3.3 of \cite{Be08}, if $r\gg0$, there exists a unique
$\B_{\rig,K}^{\dagger,r}$-submodule $\D_{\rig,K}^{\dagger,r}$ of
$\D_{\rig,K}^{\dagger}$ such that $\D_{\rig,K}^{\dagger}=\D_{\rig,K}^{\dagger,r}\otimes_{\B_{\rig,K}^{\dagger,r}}\B_{\rig,K}^{\dagger}$
and such that the $\B_{\rig,K}^{\dagger,pr}$-module $\B_{\rig,K}^{\dagger,pr}\otimes_{\B_{\rig,K}^{\dagger,r}}\D_{\rig,K}^{\dagger,r}$
has a basis contained in $\varphi_{q}\left(\D_{\rig,K}^{\dagger,r}\right)$.
Finally, let $t_{K}=\log_{\mathcal{F}}(T)\in\B_{\rig,K}^{\dagger}$;
it belongs to $\B_{\rig,K}^{\dagger,r_{0}}$ and $\iota_{0}(t_{K})$
coincides with the usual $t_{K}$ of $\B_{\dR}^{+}$ as in $\mathsection2$
and $\mathsection3$. We set

\[
\begin{aligned} & \mathrm{D}_{\mathrm{Sen},K}\left(\mathrm{D}_{\mathrm{rig},K}^{\dagger}\right)=\left(D_{\mathrm{rig},K}^{\dagger,r}\otimes_{\theta\circ\varphi_{q}^{-n}}K_{n}\right)\otimes_{K_{n}}K_{\infty},\\
 & \D_{\dif,K}\left(\D_{\rig,K}^{\dagger}\right)=\left(\D_{\rig,K}^{\dagger,r}\otimes_{\iota_{n}}K_{n}\left(\left(t_{K}\right)\right)\right)\otimes_{K_{n}}K_{\infty}\left(\left(t_{K}\right)\right),\\
 & \D_{\dR,K}\left(\D_{\rig,K}^{\dagger}\right)=\D_{\dif,K}\left(\D_{\rig,K}^{\dagger}\right)^{\Gamma_{K}},\\
 & \D_{\cris,K}\left(\D_{\rig,K}^{\dagger}\right)=\left(\D_{\rig,K}^{\dagger}\left[1/t_{K}\right]\right)^{\Gamma_{K}},\\
 & \D_{\st,K}\left(\D_{\rig,K}^{\dagger}\right)=\left(\D_{\log,K}^{\dagger}\left[1/t_{K}\right]\right)^{\Gamma_{K}}.
\end{aligned}
\]

One verifies that $\D_{\Sen,K}\left(\D_{\rig,K}^{\dagger}\right)$
and $\D_{\dif,K}\left(\D_{\rig,K}^{\dagger}\right)$ are independent
of the choice of $n$. The main theorem of this section is the following.
\begin{thm}
Let $V$ be $K$-analytic representation of $G_{K}$. For $*\in\left\{ \Sen,\dif,\dR,\cris,\st\right\} $,
we have a natural isomorphism
\[
\D_{*,K}(V)\cong\D_{*,K}\left(\D_{\rig,K}^{\dagger}(V)\right).
\]
\end{thm}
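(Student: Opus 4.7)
The strategy is to reduce all five cases to the $G_K$-equivariant isomorphism of Theorem 5.2(3),
\[
\widetilde{\B}_{\rig}^{\dagger}\otimes_{\B_{\rig,K}^{\dagger}}\D_{\rig,K}^{\dagger}(V)\cong\widetilde{\B}_{\rig}^{\dagger}\otimes_{K}V,
\]
by applying suitable completions or localizations of the big period rings and then passing to $H_{K}$-invariants followed by $K$-(pro-)analytic vectors. The inputs driving the identifications are Proposition 2.6 on the pro-$K$-analytic vectors of $\B_{\dR}$, together with Theorem 4.4 and Theorem 4.6 on the pro-$K$-analytic vectors of $\widetilde{\B}_{\rig,K}^{\dagger}$ and $\widetilde{\B}_{\log,K}^{\dagger}$.

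For $*\in\left\{ \Sen,\dif\right\} $, I choose $n\gg0$ so that one can restrict the isomorphism above to $\B_{\rig,K}^{\dagger,r_{n}}$-coefficients, then apply $\theta\circ\varphi_{q}^{-n}$ (for $\Sen$) or $\iota_{n}$ (for $\dif$) to land in $\C_{p}\otimes_{K}V$ or $\B_{\dR}\otimes_{K}V$, respectively. Taking $H_{K}$-invariants and then $K$-(pro-)analytic vectors, the left-hand side produces $\D_{\Sen,K}(V)$ or $\D_{\dif,K}^{+}(V)$ by definition, and on the right-hand side Proposition 2.6 identifies $\widehat{K}_{\infty}^{K-\la}=K_{\infty}$ and $(\B_{\dR}^{+})^{H_{K},K-\pa}=K_{\infty}[[t_{K}]]$, so that the result matches the definition of $\D_{\Sen,K}(\D_{\rig,K}^{\dagger})$ and $\D_{\dif,K}(\D_{\rig,K}^{\dagger})$, respectively. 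The case $\D_{\dif,K}$ is then obtained from $\D_{\dif,K}^{+}$ by inverting $t_{K}$, and the de Rham case is immediate by taking $\Gamma_{K}$-invariants and applying Proposition 3.7.

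For $*\in\left\{ \cris,\st\right\} $, I invert $t_{K}$ in the isomorphism of Theorem 5.2(3) (tensoring further with $\widetilde{\B}_{\log}^{\dagger}$ in the semistable case) and take $H_{K}$-invariants, yielding
\[
\widetilde{\B}_{\rig,K}^{\dagger}[1/t_{K}]\otimes_{\B_{\rig,K}^{\dagger}}\D_{\rig,K}^{\dagger}(V)\cong\widetilde{\B}_{\rig,K}^{\dagger}[1/t_{K}]\otimes_{K}V,
\]
and likewise for $\widetilde{\B}_{\log,K}^{\dagger}$. Applying the pro-$K$-analytic functor and invoking Theorems 4.4 and 4.6, Lemma 2.1 then identifies the $K$-pro-analytic vectors with $\cup_{n\ge 0}\varphi_{q}^{-n}(\D_{\rig,K}^{\dagger}[1/t_{K}])$ (resp.\ its log variant). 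Since $\varphi_{q}$ commutes with $\Gamma_{K}$ and is bijective on these $\Gamma_{K}$-invariants, the final $\Gamma_{K}$-invariants reduce to $\D_{\rig,K}^{\dagger}[1/t_{K}]^{\Gamma_{K}}=\D_{\cris,K}(\D_{\rig,K}^{\dagger})$ and $\D_{\log,K}^{\dagger}[1/t_{K}]^{\Gamma_{K}}=\D_{\st,K}(\D_{\rig,K}^{\dagger})$, respectively.

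The hard part will be matching these $\Gamma_{K}$-invariants with the usual $\D_{\cris,K}(V)=(\B_{\max,K}\otimes_{K}V)^{G_{K}}$ and $\D_{\st,K}(V)=(\B_{\st,K}\otimes_{K}V)^{G_{K}}$. The inclusion $\widetilde{\B}_{\rig,K}^{\dagger}[1/t_{K}]\subset\B_{\max,K}$ (and its log version) gives one direction for free, but the reverse requires showing that any $G_{K}$-fixed vector in $\B_{\max,K}\otimes_{K}V$ (or in $\B_{\st,K}\otimes_{K}V$) already lies in $\widetilde{\B}_{\rig,K}^{\dagger}[1/t_{K}]\otimes_{K}V$ (or the log analogue). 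The key observation is that any such fixed vector is automatically fixed by $\Gamma_{K}$ and hence trivially pro-$K$-analytic when viewed inside $(\B_{\max,K}\otimes_{K}V)^{H_{K}}$; combined with the isomorphism of Theorem 5.2(3), Lemma 3.8, and Theorems 4.4 and 4.6 (which pin down the pro-$K$-analytic vectors precisely inside $\widetilde{\B}_{\rig,K}^{\dagger}[1/t_{K}]$), this forces the fixed vector into the desired subring. This is where the full strength of the pro-$K$-analytic machinery developed in \S\S2--4 enters the proof.
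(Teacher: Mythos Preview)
Your overall strategy matches the paper's, and for $*\in\{\Sen,\dif,\dR\}$ your sketch is essentially what the paper does (construct the $\Gamma_K$-equivariant map via $\theta\circ\varphi_q^{-n}$ or $\iota_n$, observe that its image lands in the $K$-locally analytic vectors, and compare ranks/apply Nakayama). For $*\in\{\cris,\st\}$ you have also correctly identified the main engine: $\Gamma_K$-fixed vectors are automatically pro-$K$-analytic, so Lemma 2.1 together with Theorems 4.4 and 4.6 forces them into $\cup_{n\geq 0}\varphi_q^{-n}(\D_{\log,K}^\dagger)$, after which a Frobenius shuffle finishes.

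There is, however, a genuine gap in your final paragraph. You assert an inclusion $\widetilde{\B}_{\rig,K}^{\dagger}[1/t_K]\subset\B_{\max,K}$ and then propose to show the reverse containment of $G_K$-invariants by computing pro-$K$-analytic vectors inside $(\B_{\max,K}\otimes_K V)^{H_K}$. But Theorems 4.4 and 4.6 say nothing about the pro-$K$-analytic vectors of $\B_{\max,K}^{H_K}$ or $\B_{\st,K}^{H_K}$; they only compute them for $\widetilde{\B}_{\rig,K}^{\dagger}$ and $\widetilde{\B}_{\log,K}^{\dagger}$. So your argument, as written, never gets the fixed vector out of $\B_{\max,K}$ and into the tilde ring where the machinery applies. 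The paper does not attempt this route. Instead it invokes Lemma 5.6, which is a direct consequence of Berger's Frobenius regularization result \cite[Proposition 3.4]{Be02}: one has
\[
\D_{\st,K}^{+}(V)=\left(\B_{\st}^{+}\otimes_{K_0}V\right)^{G_K}=\left(\widetilde{\B}_{\log,0}^{\dagger}\otimes_{K_0}V\right)^{G_K}=\left(\widetilde{\B}_{\log}^{\dagger}\otimes_{K}V\right)^{G_K},
\]
the last equality using Proposition 4.3. This identity is the missing bridge: it transports the problem from $\B_{\st,K}$ to $\widetilde{\B}_{\log}^{\dagger}$ \emph{before} any pro-$K$-analytic computation, and only then does the paper run the argument you describe. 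You should replace your appeal to a ring inclusion by this Frobenius regularization step.
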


\begin{proof}
Set $\D_{\rig,K}^{\dagger}=\D_{\rig,K}^{\dagger}(V)$. For $r,n\gg0$,
we have a natural map $\D_{\rig,K}^{\dagger,r}\xrightarrow{\theta\circ\varphi_{q}^{-n}}W$,
where $W=\left(\C_{p}\otimes_{K}V\right)^{H_{K}}$. The image of $\theta\circ\varphi_{q}^{-n}$
is by definition $\D_{\Sen,K}\left(\D_{\rig,K}^{\dagger}\right)$,
which is a $K_{\infty}$-submodule of rank $d=\dim_{K}V$. As $\theta\circ\varphi_{q}^{-n}$
is $\Gamma_{K}$ equivariant, it maps pro $K$-analytic vectors to
locally $K$-analytic vectors, so the image lands in $W^{K-\la}=\D_{\Sen,K}(V)$.
Comparing ranks we get the desired isomorphism for $*=\mathrm{Sen}$\@.
Replacing $\theta\circ\varphi_{q}^{-n}$ by $\iota_{0}\circ\varphi_{q}^{-n}$
we similarly get a map $\D_{\dif,K}^{+}\left(V\right)\rightarrow\D_{\dif,K}^{+}\left(\D_{\rig,K}^{\dagger,r}\right)$
of two $K_{\infty}\left[\left[t_{K}\right]\right]$-modules of rank
$d$, whose reduction mod $t_{K}$ is the isomorphism $\D_{\Sen,K}(V)\xrightarrow{\sim}\D_{\Sen,K}\left(\D_{\rig,K}^{\dagger}\right)$.
Thus by Nakayama's lemma we have $\D_{\dif,K}^{+}(V)\cong\D_{\dif,K}^{+}\left(\D_{\rig,K}^{\dagger}(V)\right)$
and we deduce $\D_{\dif,K}(V)\cong\D_{\dif,K}\left(\D_{\rig,K}^{\dagger}(V)\right)$
by passing to colimits.

As $\D_{\cris,K}=\D_{\st,K}^{N=0}$, it remains to prove the comparison
for $*=\st$. Twisting $V$ by an appropriate power of $\chi_{\pi}$,
we further reduce to proving that $\D_{\st,K}^{+}\left(V\right)=\left(\D_{\log,K}^{\dagger}\right)^{\Gamma_{K}}$.
By Lemma 5.6 below, we have

\[
\begin{aligned}\D_{\st,K}^{+}\left(V\right) & =\left(\widetilde{\B}_{\log}^{\dagger}\otimes_{K}V\right)^{G_{K}}\\
 & =\left(\widetilde{\B}_{\log}^{\dagger}\otimes_{\B_{\log,K}^{\dagger}}\D_{\log,K}^{\dagger}(V)\right)^{G_{K}}\\
 & =\left(\widetilde{\B}_{\log,K}^{\dagger}\otimes_{\B_{\log,K}^{\dagger}}\D_{\log,K}^{\dagger}(V)\right)^{\Gamma_{K}}.
\end{aligned}
\]
On the one hand, this implies that $\D_{\st,K}^{+}\left(V\right)\subset\left(\D_{\log,K}^{\dagger}\right)^{\Gamma_{K}}$.
On the other hand, vectors which are fixed by $\Gamma_{K}$ are also
pro $K$-analytic on $\Gamma_{K}$, so

\[
\begin{aligned}\D_{\st,K}^{+}\left(V\right) & =\left(\widetilde{\B}_{\log,K}^{\dagger}\otimes_{\B_{\log,K}^{\dagger}}\D_{\log,K}^{\dagger}(V)\right)^{\Gamma_{K},K-\pa}\\
 & =\left(\left(\widetilde{\B}_{\log,K}^{\dagger}\otimes_{\B_{\log,K}^{\dagger}}\D_{\log,K}^{\dagger}(V)\right)^{K-\pa}\right)^{\Gamma_{K}}.
\end{aligned}
\]
Since $V$ is $K$-analytic, Theorem 5.2 implies that $\D_{\log,K}^{\dagger}(V)$
is pro $K$-analytic, and so by Lemma 2.1 we have 
\[
\left(\widetilde{\B}_{\log,K}^{\dagger}\otimes_{\B_{\log,K}^{\dagger}}\D_{\log,K}^{\dagger}(V)\right)^{K-\pa}=\left(\widetilde{\B}_{\log,K}^{\dagger}\right)^{K-\pa}\otimes_{\B_{\log,K}^{\dagger}}\D_{\log,K}^{\dagger}(V).
\]
Applying Theorem 4.6, we deduce
\[
\D_{\st,K}^{+}\left(V\right)\subset\left(\cup_{n\geq0}\varphi_{q}^{-n}\left(\B_{\log,K}^{\dagger}\right)\otimes_{\B_{\log,K}^{\dagger}}\D_{\log,K}^{\dagger}(V)\right)^{\Gamma_{K}}.
\]
Thus $\varphi_{q}^{n}\left(\D_{\st,K}^{+}\left(V\right)\right)\subset\left(\D_{\log,K}^{\dagger}\right)^{\Gamma_{K}}$
for some $n\gg0$. If $e_{1},...,e_{l}$ is a basis of $\D_{\st,K}^{+}\left(V\right)$
then $\varphi_{q}^{n}(e_{1}),...,\varphi_{q}^{n}(e_{l})$ gives another
basis of $\D_{\st,K}^{+}\left(V\right)$ which lies in $\left(\D_{\log,K}^{\dagger}\right)^{\Gamma_{K}}$.
This concludes the proof.
\end{proof}
The following lemma was used in the proof of Theorem 5.5.
\begin{lem}
Let $V\in\mathrm{Rep}_{E}(G_{K})$. Then 
\[
\D_{\st,K}^{+}(V)=\left(\widetilde{\B}_{\log}^{\dagger}\otimes_{K}V\right)^{G_{K}}.
\]
\end{lem}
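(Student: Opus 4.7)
The plan is to reduce the lemma to the corresponding classical fact (the case $K = \Q_p$), which was established by Berger in \cite{Be02}, and then perform a straightforward base change along the unramified extension $K_0/\Q_p$.

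First, I would use Proposition 4.3 to write $\widetilde{\B}_{\log}^\dagger = \widetilde{\B}_{\log,0}^\dagger \otimes_{K_0} K$. Combined with the definition $\B_{\st,K}^+ = \B_{\st}^+ \otimes_{K_0} K$ from $\mathsection 3.3$, this gives the identifications
\[
\D_{\st,K}^+(V) = (\B_{\st}^+ \otimes_{K_0} V)^{G_K}, \qquad (\widetilde{\B}_{\log}^\dagger \otimes_K V)^{G_K} = (\widetilde{\B}_{\log,0}^\dagger \otimes_{K_0} V)^{G_K},
\]
where on each right-hand side $V$ is viewed as a $K_0$-linear $G_K$-representation by restriction of scalars (using $K_0 \subset K \subset E$). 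The lemma therefore reduces to the equality
\[
(\B_{\st}^+ \otimes_{K_0} V)^{G_K} = (\widetilde{\B}_{\log,0}^\dagger \otimes_{K_0} V)^{G_K}.
\]
The inclusion $\subset$ is the easy direction: it follows from the canonical $(\varphi, G_K)$-equivariant embedding $\B_{\st}^+ \hookrightarrow \widetilde{\B}_{\log,0}^\dagger$. Indeed, $\B_{\max}^+$ is by construction the $p$-adic completion of $\widetilde{\A}_0^+[p/[\varpi]]$, so maps into each $\widetilde{\B}_0^{[r_0,s]}$ and therefore into their intersection $\widetilde{\B}_{\rig,0}^{\dagger,r_0}$; adjoining $\log[p^\flat]$ on both sides then extends this to an embedding $\B_{\st}^+ \hookrightarrow \widetilde{\B}_{\log,0}^\dagger$.

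For the reverse inclusion, I would invoke Berger's theorem from $\mathsection 3$ of \cite{Be02}: for every $W \in \mathrm{Rep}_{\Q_p}(G_K)$,
\[
(\B_{\st}^+ \otimes_{\Q_p} W)^{G_K} = (\widetilde{\B}_{\log,0}^\dagger \otimes_{\Q_p} W)^{G_K}.
\]
Applying this with $W$ equal to $V$ viewed as a $\Q_p$-representation, I would descend from $\Q_p$-coefficients to $K_0$-coefficients. Since $K_0/\Q_p$ is unramified, one has $K_0 \otimes_{\Q_p} K_0 \cong \prod_\sigma K_0$, the product being indexed by embeddings of $K_0$. Both $\B_{\st}^+ \otimes_{\Q_p} V$ and $\widetilde{\B}_{\log,0}^\dagger \otimes_{\Q_p} V$ accordingly decompose as direct sums indexed by these embeddings, and since $G_K$ acts trivially on $K_0$, the formation of $G_K$-invariants commutes with this decomposition. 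The summand at the identity embedding recovers exactly the desired $K_0$-linear identity.

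The main obstacle is the reverse inclusion in the classical case; its proof in \cite{Be02} amounts to showing that a $G_K$-invariant element of $\widetilde{\B}_{\log,0}^\dagger \otimes V$ already satisfies the convergence conditions defining $\B_{\st}^+$, using the structure $\widetilde{\B}_{\log,0}^\dagger = \widetilde{\B}_{\rig,0}^\dagger[\log[p^\flat]]$ to isolate the monodromy direction and then a careful analysis inside the Fréchet space $\widetilde{\B}_{\rig,0}^\dagger$. Since this is available as a classical input, the present lemma reduces to the formal base-change manipulation described above.
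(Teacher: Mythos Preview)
Your proposal is correct and follows essentially the same route as the paper's proof: both reduce to $K_{0}$-coefficients via Proposition 4.3, invoke Berger's result (Proposition 3.4 of \cite{Be02}) for $\Q_{p}$-coefficients, and then project the decomposition $\B_{\st}^{+}\otimes_{\Q_{p}}V\cong\oplus_{\sigma}\B_{\st}^{+}\otimes_{K_{0}}V^{\sigma}$ (equivalently, the one coming from $K_{0}\otimes_{\Q_{p}}K_{0}\cong\prod_{\sigma}K_{0}$) onto the identity component. The only cosmetic difference is that the paper phrases the decomposition in terms of the $\sigma$-twists $V^{\sigma}$, while you phrase it in terms of the idempotents of $K_{0}\otimes_{\Q_{p}}K_{0}$; these are the same thing.
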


\begin{proof}
Given an automorphism $\sigma:K_{0}\rightarrow K_{0}$, let $V^{\sigma}$
be the $\sigma$-twist of $V$. Then we have $G_{K}$-compatible identifications
$\B_{\st}^{+}\otimes_{\Q_{p}}V\cong\oplus_{\sigma}\B_{\st}^{+}\otimes_{K_{0}}V^{\sigma}$
and $\widetilde{\B}_{\log,0}^{\dagger}\otimes_{\Q_{p}}V\cong\oplus_{\sigma}\widetilde{\B}_{\log,0}^{\dagger}\otimes_{K_{0}}V^{\sigma}$.
Now by Proposition 3.4 of \cite{Be02} we have $\left(\B_{\st}^{+}\otimes_{\Q_{p}}V\right)^{G_{K}}=\left(\widetilde{\B}_{\log,0}^{\dagger}\otimes_{\Q_{p}}V\right)^{G_{K}}$
and hence by projecting to the $\sigma=\Id$ component
\[
\D_{\st,K}^{+}(V)=\left(\B_{\st}^{+}\otimes_{K_{0}}V\right)^{G_{K}}=\left(\widetilde{\B}_{\log,0}^{\dagger}\otimes_{K_{0}}V\right)^{G_{K}}.
\]
Finally, by Proposition 4.3 we have $\widetilde{\B}_{\log}^{\dagger}=\widetilde{\B}_{\log,0}^{\dagger}\otimes_{K_{0}}K$
so $\left(\widetilde{\B}_{\log,0}^{\dagger}\otimes_{K_{0}}V\right)^{G_{K}}=\left(\widetilde{\B}_{\log}^{\dagger}\otimes_{K}V\right)^{G_{K}}$. 
\end{proof}
\begin{rem}
The definitions given in this section for $\D_{\Sen,K},\D_{\dif,K},\D_{\dR,K},\D_{\cris,K},\D_{\st,K}$
make sense for non étale $K$-analytic $(\varphi_{q},\Gamma_{K})$-modules.
The properties of these modules which were proved in $\mathsection3$
carry over with no difficulty to this more general case.
\end{rem}

\begin{rem}
For each $\D_{\rig,K}^{\dagger}$, we define filtrations on $\D_{\dR,K}\left(\D_{\rig,K}^{\dagger}\right)$
and $\D_{\cris,K}\left(\D_{\rig,K}^{\dagger}\right)$. For $i\in\Z$,
we set
\[
\Fil^{i}\left(\D_{\dR,K}\left(\D_{\rig,K}^{\dagger}\right)\right)=\left(K_{\infty}\otimes_{K_{n}}t_{K}^{i}K_{n}\left[\left[t_{K}\right]\right]\otimes_{\B_{\rig,K}^{\dagger,r}}\D_{\rig,K}^{\dagger,r}\right)^{\Gamma_{K}}
\]

for $n\geq n(r)$. Recall there are injections $\iota_{n}:\D_{\cris,K}\left(\D_{\rig,K}^{\dagger}\right)\hookrightarrow\D_{\dR,K}\left(\D_{\rig,K}^{\dagger}\right)$
for $n\gg0$, and set

\[
\varphi_{q}^{n}\left(\Fil^{i}\D_{\cris,K}\left(\D_{\rig,K}^{\dagger}\right)\right)=\iota_{n}^{-1}\left(\Fil^{i}\left(\D_{\dR,K}\left(\D_{\rig,K}^{\dagger}\right)\right)\right).
\]
The filtrations $\Fil^{i}\left(\D_{\dR,K}\left(\D_{\rig,K}^{\dagger}\right)\right)$
and $\Fil^{i}\left(\D_{\cris,K}\left(\D_{\rig,K}^{\dagger}\right)\right)$
are independent of all choices. If $\D=\D_{\rig,K}^{\dagger}(V)$,
one checks that the usual filtration of $\D_{\dR,K}\left(V\right)$
(resp. $\D_{\cris,K}(V)$) induced from that of $\B_{\dR}$ (resp.
$\B_{\max,K}$) coincides with the above defined filtration via the
identification of Theorem 5.5.
\end{rem}

\section{Lubin-Tate trianguline representations of dimension 2}

We keep the convention that $E$ is a finite extension of $\Q_{p}$
which contains $K$.
\begin{defn}
1. A $\left(\varphi_{q},\Gamma_{K}\right)$-module over $\B_{\rig,K}^{\dagger}\otimes_{K}E$
is called Lubin-Tate trianguline if it can be written as a successive
extension of $\left(\varphi_{q},\Gamma_{K}\right)$-modules of rank
1.

2. An $E$-linear $K$-analytic representation $V$ is called Lubin-Tate
trianguline if $\D_{\rig,K}^{\dagger}(V)$ is Lubin-Tate trianguline.
\end{defn}

In the case $K=\Q_{p}$, trianguline $\left(\varphi_{q},\Gamma_{K}\right)$-modules
of dimension 2 were first studied by Colmez in \cite{Co08}. We shall
be concerned with Lubin-Tate trianguline $\left(\varphi_{q},\Gamma_{K}\right)$-modules
of dimension 2 which were studied by Fourquaux and Xie in \cite{FX13}.

\subsection{Characters of the Weil group}

Recall that if $W_{K}$ is the Weil group of $K$, local class field
theory gives a natural isomorphism $W_{K}^{\mathrm{ab}}\cong K^{\times}$.
This allows us to identify characters $\delta:K^{\times}\rightarrow E^{\times}$
with characters $W_{K}^{\mathrm{ab}}\rightarrow E^{\times}$, the
identification given by 
\[
\delta(\mathrm{Frob}_{\pi}^{-n}g)=\delta(\pi)^{n}\delta(\chi_{\pi}(g))
\]
 for $g\in\Gal(K^{\mathrm{ab}}/K^{\mathrm{un}})$ and $n\in\Z$. To
such characters $\delta$ we associate the $\left(\varphi_{q},\Gamma_{K}\right)$-module
$\left(\B_{\rig,K}^{\dagger}\otimes_{K}E\right)(\delta)$. It is a
$\left(\varphi_{q},\Gamma_{K}\right)$-module of rank 1 with a basis
$e_{\delta}$, where $\varphi_{q}(e_{\delta})=\delta(\pi)e_{\delta}$
and $g(e_{\delta})=\delta(\chi_{\pi}(g))$ for $g\in\Gamma_{K}$.
Note that this $\left(\varphi_{q},\Gamma_{K}\right)$-module is étale
exactly when $\delta$ is unitary; in this case, if $\delta$ is locally
$K$-analytic, the module $\left(\B_{\rig,K}^{\dagger}\otimes_{K}E\right)(\delta)$
corresponds under the equivalence of categories in $\mathsection5.1$
to the extension of $\delta$ to $\Gal(\overline{K}/K)$. Proposition
1.9 of \cite{FX13} shows that all $K$-analytic $(\varphi_{q},\Gamma_{K})$-modules
of rank 1 over $\B_{\rig,K}^{\dagger}\otimes_{K}E$ are obtained in
this way. We write $\mathscr{I}_{\mathrm{an}}=\mathscr{I}_{\mathrm{an}}(E)$
for the set of locally $K$-analytic Weil characters. There are two
characters in $\mathscr{I}_{\mathrm{an}}$ of particular interest:
the inclusion character $x:K^{\times}\rightarrow E^{\times}$ and
the character $\mu_{\lambda}(z)=\lambda^{\val_{\pi}\left(z\right)}$
. To $\delta\in\mathscr{I}_{\mathrm{an}}$ we associate to the weight
$w(\delta)=\frac{\log_{p}\delta(u)}{\log_{p}u}$ where $u\in\mathcal{O}_{K}^{\times}$
is any element with $\log_{p}u\neq0$, and then $w(\delta)$ does
not depend on $u$. If $\delta$ is unitary and $w(\delta)\in\Z$
then $w(\delta)$ is the $K$-Hodge-Tate weight of the associated
character of $\Gal(\overline{K}/K)$ (see $\mathsection3$).

\subsection{Extensions}

Given $\delta_{1},\delta_{2}\in\mathscr{I}_{\mathrm{an}}$ we consider
the set of extensions 
\[
0\rightarrow\left(\B_{\rig,K}^{\dagger}\otimes_{K}E\right)(\delta_{1})\rightarrow\D_{\rig,K}^{\dagger}\rightarrow\left(\B_{\rig,K}^{\dagger}\otimes_{K}E\right)(\delta_{2})\rightarrow0
\]
in the category of $K$-analytic $\left(\varphi_{q},\Gamma_{K}\right)$-modules.
These extensions are classified by a finite-dimensional $E$-vector
space $\H_{\an}^{1}\left(\delta_{1}\delta_{2}^{-1}\right)$, whose
dimension is determined in Theorem 0.3 of \cite{FX13} as follows.
\begin{thm}
$\dim_{E}\H_{\an}^{1}\left(\delta_{1}\delta_{2}^{-1}\right)=2$ if
$\delta_{1}\delta_{2}^{-1}=x^{-i}$ for $i\in\Z_{\geq1}$ or if $\delta_{1}\delta_{2}^{-1}=\mu_{q^{-1}}x^{i}$
for $i\in\Z_{\geq0}$. Otherwise, $\dim_{E}\H_{\an}^{1}\left(\delta_{1}\delta_{2}^{-1}\right)=1$.
\end{thm}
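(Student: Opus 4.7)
The plan is to compute $H^1_{\an}(\delta)$ for $\delta := \delta_1\delta_2^{-1}$ using the analytic Fontaine-Herr complex for the rank-one $K$-analytic $(\varphi_q, \Gamma_K)$-module $\mathcal{R}_E(\delta) := (\B_{\rig,K}^\dagger \otimes_K E)(\delta)$. Since $\Gamma_K \cong \mathcal{O}_K^\times$ is a $K$-analytic Lie group of $K$-dimension one with infinitesimal generator $\nabla$ commuting with $\varphi_q$, the analytic cohomology is governed by the two-term Koszul-type complex
\[
C_{\an}^\bullet(\delta) : \mathcal{R}_E \xrightarrow{\big(\varphi_q^{(\delta)}-1,\ \nabla^{(\delta)}\big)} \mathcal{R}_E \oplus \mathcal{R}_E \xrightarrow{\big(\nabla^{(\delta)},\ -(\varphi_q^{(\delta)}-1)\big)} \mathcal{R}_E,
\]
with $\varphi_q^{(\delta)} = \delta(\pi)\varphi_q$ and $\nabla^{(\delta)} = \nabla + w(\delta)$. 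The first step is to identify $H^1(C_{\an}^\bullet(\delta))$ with $H^1_{\an}(\delta)$ classifying the $K$-analytic extensions of $\mathcal{R}_E(\delta_2)$ by $\mathcal{R}_E(\delta_1)$.

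The next step is to compute $H^0_{\an}(\delta)$ and $H^2_{\an}(\delta)$. For $H^0$: using $\nabla t_K = t_K$ and $\varphi_q(t_K) = \pi t_K$, the equation $\nabla f = -w(\delta) f$ admits a nonzero solution in $\mathcal{R}_E$ exactly when $-w(\delta) \in \Z_{\geq 0}$ (spanned by $t_K^{-w(\delta)}$), and the further requirement $\varphi_q^{(\delta)} f = f$ reduces to $\delta(\pi) = \pi^{w(\delta)}$; these combine to give $H^0_{\an}(\delta) \neq 0$ iff $\delta = x^{-i}$ for some $i \geq 0$. For $H^2$: I would establish a Serre-style duality $H^2_{\an}(\delta) \cong H^0_{\an}(\chi_{\LT}\delta^{-1})^\vee$ via a residue/trace pairing on the analytic Herr complex, where $\chi_{\LT}$ is the dualizing character. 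In the $K$-analytic Lubin-Tate setting, the trace computation identifies $\chi_{\LT} = \mu_{q^{-1}}$ (reflecting the eigenvalue $q$ of $\varphi_q$ in top cohomology), so that $H^2_{\an}(\delta) \neq 0$ iff $\delta = \mu_{q^{-1}} x^i$ for some $i \geq 0$.

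These computations are combined using an Euler-Poincar\'e identity for the analytic Herr complex in rank one, which in the generic range reads $\dim_E H^1_{\an}(\delta) = 1 + \dim_E H^0_{\an}(\delta) + \dim_E H^2_{\an}(\delta)$. A separate analysis at the overlap point $\delta = 1$ (where $H^0_{\an}(1) = E$ but the corresponding class represents the trivial/split extension and does not produce a new extension class) accounts for the restriction $i \geq 1$ rather than $i \geq 0$ in the first exceptional family. Substituting the dimension formulas for $H^0$ and $H^2$ into this identity yields exactly the claimed dimensions: $2$ on the two exceptional families and $1$ otherwise.

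The main obstacle is the analytic Serre duality with dualizing character $\mu_{q^{-1}}$: constructing the residue trace on the Lubin-Tate Robba ring $\B_{\rig,K}^\dagger$ and verifying that it gives a perfect pairing on the analytic Herr cohomology requires careful adaptation of the Colmez-style trace construction to the Lubin-Tate setting, making essential use of the pro-$K$-analytic structure results of Section 4 (in particular Theorems 4.4 and 4.6 on pro-$K$-analytic vectors in $\widetilde{\B}_{\rig,K}^\dagger$ and $\widetilde{\B}_{\log,K}^\dagger$). A secondary technical point is reconciling the asymmetric boundary condition at $\delta = 1$ with the Euler-Poincar\'e bookkeeping.
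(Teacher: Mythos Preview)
The paper does not prove this theorem: it is stated as Theorem 6.2 with attribution to Theorem 0.3 of \cite{FX13} (Fourquaux--Xie), so there is no argument in the present paper to compare your proposal against.

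That said, your proposed argument has a genuine gap. Your Euler--Poincar\'e identity $\dim_E \H^1_{\an}(\delta) = 1 + \dim_E \H^0_{\an}(\delta) + \dim_E \H^2_{\an}(\delta)$, combined with your own computation $\dim_E \H^0_{\an}(1) = 1$ and (via your proposed duality) $\dim_E \H^2_{\an}(1) = \dim_E \H^0_{\an}(\mu_{q^{-1}}) = 0$, forces $\dim_E \H^1_{\an}(1) = 2$. But the theorem asserts $\dim_E \H^1_{\an}(1) = 1$, since $\delta = 1$ lies in neither exceptional family. Your proposed fix --- that at $\delta = 1$ the $\H^0$ class ``represents the trivial/split extension and does not produce a new extension class'' --- is a category error: $\H^0$ computes invariants, not extension classes, and its contribution to an Euler characteristic identity is purely numerical. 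There is no mechanism for discounting it.

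What this actually reveals is that at least one of your three inputs fails in the $K$-analytic Lubin--Tate setting when $K \neq \Q_p$: either the Euler characteristic of the analytic Herr complex is not uniformly $-1$, or Tate-style duality with dualizing character $\mu_{q^{-1}}$ does not hold as stated, or $\H^2$ is not what you expect. The Colmez-style $\psi$-operator and residue/trace formalism that underlies both the Euler characteristic formula and duality in the cyclotomic case behaves quite differently over a Lubin--Tate Robba ring, and you cannot simply transport those formal properties. The proof in \cite{FX13} accordingly does not proceed via Euler characteristic plus duality: it computes $\H^1_{\an}(\delta)$ directly by explicit analysis of the equations $(\varphi_q - 1)m = a$ and $(\gamma - 1)m = b$ (equivalently the $\nabla$-equation) on the Robba ring, handling the exceptional characters by case analysis. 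If you wish to pursue the Euler--Poincar\'e/duality route, you would first have to establish those formal properties in the Lubin--Tate $K$-analytic setting, which is itself a substantial project and not a consequence of the results in \S4 of the paper.
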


\subsection{Spaces of Lubin-Tate trianguline $\left(\varphi_{q},\Gamma_{K}\right)$-modules
of dimension 2}

There is an action of $\mathbb{G}_{m}(E)=E^{\times}$ on $\H_{\an}^{1}\left(\delta_{1}\delta_{2}^{-1}\right)$,
and extensions which lie in the same orbit of this action give rise
to isomorphic $\left(\varphi_{q},\Gamma_{K}\right)$-modules. Following
$\mathsection6$ of \cite{FX13} we write 
\[
\mathscr{S}^{\an}=\mathscr{S}^{\an}(E)=\left\{ s=(\delta_{1},\delta_{2},\mathcal{L}):\delta_{1},\delta_{2}\in\mathscr{I}_{\mathrm{an}}\left(E\right),\mathcal{L}\in\H_{\an}^{1}(\delta_{1}\delta_{2}^{-1})\backslash\left\{ 0\right\} \right\} /\mathbb{G}_{m}(E).
\]
By Theorem 6.2, each pair of characters $\delta_{1},\delta_{2}\in\mathscr{I}_{\mathrm{an}}$
give rise either to a unique point $(\delta_{1},\delta_{2},\infty)$
of $\mathscr{S}^{\an}$ in the generic case or a $\P^{1}(E)$-family
of points of $\mathscr{S}^{\an}$ in the non generic case. To each
such $s$ we associate the correspnding $\left(\varphi_{q},\Gamma_{K}\right)$-module
$\D_{\rig,K}^{\dagger}(s)$ which is an extension of $\left(\B_{\rig,K}^{\dagger}\otimes_{K}E\right)(\delta_{1})$
by $\left(\B_{\rig,K}^{\dagger}\otimes_{K}E\right)(\delta_{2})$. 

Inside $\mathscr{S}^{\an}$, we consider the subset $\mathscr{S}_{+}^{\an}$
of real interest given by these $s\in\mathscr{S}^{\an}$ such that
\[
\val_{\pi}(\delta_{1}\delta_{2})=0\text{ and }\val_{\pi}(\delta_{1}(\pi))\geq0.
\]
For example, all étale $K$-analytic Lubin-Tate trianguline $(\varphi_{q},\Gamma_{K})$-modules
appear in $\mathscr{S}_{+}^{\an}$.

For an element $s\in\mathscr{S}_{+}^{\an}$ we associate two invariants:
the slope $u(s)=\val_{\pi}(\delta_{1}(\pi))$ and the weight $w(s)=w(\delta_{1}\delta_{2}^{-1})=w(\delta_{1})-w(\delta_{2})$.
We then have the following partition 
\[
\mathscr{S}_{+}^{\an}=\mathscr{S}_{+}^{\mathrm{ng}}\amalg\mathscr{S}_{+}^{\cris}\amalg\mathscr{S}_{+}^{\mathrm{\st}}\amalg\mathscr{S}_{+}^{\mathrm{\mathrm{ord}}}\amalg\mathscr{S}_{+}^{\mathrm{ncl}},
\]
where
\[
\begin{aligned} & \mathscr{S}_{+}^{\mathrm{ng}}=\left\{ s\in\mathscr{S}_{+}^{\an}:w(s)\notin\Z_{\geq1}\right\} ,\\
 & \mathscr{S}_{+}^{\cris}=\left\{ s\in\mathscr{S}_{+}^{\an}:w(s)\in\Z_{\geq1},u(s)<w(s),\mathcal{L}=\infty\right\} ,\\
 & \mathscr{S}_{+}^{\mathrm{\st}}=\left\{ s\in\mathscr{S}_{+}^{\an}:w(s)\in\Z_{\geq1},u(s)<w(s),\mathcal{L}\neq\infty\right\} ,\\
 & \mathscr{S}_{+}^{\mathrm{\mathrm{ord}}}=\left\{ s\in\mathscr{S}_{+}^{\an}:w(s)\in\Z_{\geq1},u(s)=w(s)\right\} ,\\
 & \mathscr{S}_{+}^{\mathrm{ncl}}=\left\{ s\in\mathscr{S}_{+}^{\an}:w(s)\in\Z_{\geq1},u(s)>w(s)\right\} .
\end{aligned}
\]

(Here $\mathrm{ng}$ and $\mathrm{ncl}$ are abbreviations for ``non-geometric''
and ``non-classical''). We also write $\mathscr{S}_{0}^{\an}=\left\{ s\in\mathscr{S}_{+}^{\an}:u(s)=0\right\} $
and $\mathscr{S}_{0}^{*}=\mathscr{S}_{+}^{*}\cap\mathscr{S}_{0}^{\an}$.
Each subset $\mathscr{S}_{+}^{*}$ above is named according to the
behaviour that the $s\in\mathscr{S}_{+}^{*}$ exhibit. For example,
$s\in\mathscr{S}_{+}^{\an}$ is étale if and only if $s\notin\mathscr{S}_{+}^{\mathrm{ncl}}$,
and in that case if $s\in\mathscr{S}_{+}^{\cris}$ (resp. $s\in\mathscr{S}_{+}^{\mathrm{\st}}$,
$s\in\mathscr{S}_{+}^{\mathrm{\mathrm{ord}}}$) then $\D_{\rig,K}^{\dagger}(s)$
comes from a potentially crystalline (resp. semistable but non-crystalline,
potentially ordinary) $E$-representation up to a twist. See $\mathsection6$
of \cite{FX13} for more details.
\begin{lem}
Let $\D_{\rig,K}^{\dagger}$ be a $K$-analytic $(\varphi_{q},\Gamma_{K})$-module
over $\B_{\rig,K}^{\dagger}\otimes_{K}E$. Then 
\[
\nabla\left(t_{K}\D_{\dif,K}^{+}\left(\D_{\rig,K}^{\dagger}\right)\right)\subset t_{K}\D_{\dif,K}^{+}\left(\D_{\rig,K}^{\dagger}\right).
\]
.
\end{lem}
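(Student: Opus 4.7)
The plan is to prove the stronger statement that $\D_{\dif,K}^{+}(\D_{\rig,K}^{\dagger})$ is itself stable under $\nabla$, and then deduce the lemma by the Leibniz rule. Since $g(t_K)=\chi_{\pi}(g)t_K$ for $g\in\Gamma_K$, the defining formula $\nabla(w)=\lim_{g\to1}(g(w)-w)/\log\chi_{\pi}(g)$ yields $\nabla(t_K)=t_K$ at once. Hence, for any $y\in\D_{\dif,K}^{+}(\D_{\rig,K}^{\dagger})$, Leibniz gives
\[
\nabla(t_K y)=\nabla(t_K)\,y+t_K\nabla(y)=t_K\bigl(y+\nabla(y)\bigr),
\]
which lies in $t_K\D_{\dif,K}^{+}(\D_{\rig,K}^{\dagger})$ the moment we know $\nabla(y)\in\D_{\dif,K}^{+}(\D_{\rig,K}^{\dagger})$.

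To establish this stability, I would exploit the $K$-analyticity of $\D_{\rig,K}^{\dagger}$. Fix $r\gg0$ and choose a $\B_{\rig,K}^{\dagger,r}$-basis $f_1,\dots,f_d$ of $\D_{\rig,K}^{\dagger,r}$. The hypothesis $\D_{\rig,K}^{\dagger}=(\D_{\rig,K}^{\dagger})^{K-\pa}$ together with Lemma 2.1 implies that the matrix function $g\mapsto\Mat(g)\in\GL_d(\B_{\rig,K}^{\dagger,r})$ representing the $\Gamma_K$-action in this basis is pro $K$-analytic. For $n\geq n(r)$, the elements $e_i:=\iota_n(f_i)$ form a $K_n[[t_K]]$-basis of a submodule of $\D_{\dif,K}^{+}(\D_{\rig,K}^{\dagger})$ that generates it over $K_\infty[[t_K]]$, and the $\Gamma_K$-action on this basis is given by $\iota_n(\Mat(\cdot))$. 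Since $\iota_n$ is $\Gamma_K$-equivariant and continuous, and since the $\Gamma_K$-action on $K_n[[t_K]]\subset\B_{\dR}^{+}$ is pro $K$-analytic by Proposition 2.6, the composite $g\mapsto\iota_n(\Mat(g))$ remains pro $K$-analytic with values in $\GL_d(K_n[[t_K]])$.

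Differentiating this matrix identity at $g=1$ expresses each $\nabla(e_i)$ as a $K_n[[t_K]]$-linear combination of the $e_j$, so $\nabla(e_i)\in\D_{\dif,K}^{+}(\D_{\rig,K}^{\dagger})$. Because $\nabla$ is a derivation over $t_K\,\partial/\partial t_K$, which manifestly preserves $K_\infty[[t_K]]$, an arbitrary element $y=\sum_i h_i e_i$ with $h_i\in K_\infty[[t_K]]$ satisfies
\[
\nabla(y)=\sum_i t_K h_i'(t_K)\,e_i+\sum_i h_i\nabla(e_i)\in\D_{\dif,K}^{+}(\D_{\rig,K}^{\dagger}),
\]
finishing the stability argument and hence the lemma. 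The main obstacle to watch for is the verification that pro $K$-analyticity of the $\Gamma_K$-action is preserved under specialization along $\iota_n$, and that the resulting $\nabla(e_i)$ land in $\D_{\dif,K}^{+}$ rather than just in $\D_{\dif,K}$; this is precisely where the $K$-analyticity hypothesis on $\D_{\rig,K}^{\dagger}$, in combination with Proposition 2.6, is indispensable.
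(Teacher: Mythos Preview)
Your argument is correct and follows the same route as the paper's proof, which consists of the single line ``Use the identity $\nabla(t_{K}x)=t_{K}x+t_{K}\nabla(x)=t_{K}(x+\nabla(x))$.'' The paper treats the stability of $\D_{\dif,K}^{+}(\D_{\rig,K}^{\dagger})$ under $\nabla$ as implicit in the construction (the operator $\nabla_{\dif,K}$ is defined in $\mathsection3$ as the canonical differential operator on the pro $K$-analytic vectors, and by Remark 5.7 this carries over to the $(\varphi_q,\Gamma_K)$-module setting), whereas you have spelled this out explicitly via a basis argument; the extra work you do is not strictly necessary but is harmless and clarifying.
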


\begin{proof}
Use the identity $\nabla(t_{K}x)=t_{K}x+t_{K}\nabla(x)=t_{K}(x+\nabla(x))$.
\end{proof}
The following shows that if $s\in\mathscr{S}_{+}^{\an}\backslash\left(\mathscr{S}_{+}^{\mathrm{ng}}\cup\mathscr{S}_{+}^{\mathrm{ncl}}\right)$
then $\D_{\rig,K}^{\dagger}(s)$ is comes from a de Rham $E$-representation
up to a twist (see Corollary 3.10).
\begin{cor}
Let $s=(\delta_{1},\delta_{2},\mathcal{L})\in\mathscr{S}_{+}^{\an}$
and suppose that $w(\delta_{1}),w(\delta_{2})\in\Z$ with $w(\delta_{1})>w(\delta_{2})$.
Then $\D_{\rig,K}^{\dagger}(s)$ is $K$-de Rham.
\end{cor}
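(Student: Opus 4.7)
My plan is to combine a dimension count on $\H_{\an}^{1}(\delta_{1}\delta_{2}^{-1})$ with the exhibition of an explicit $K$-de Rham extension. I first apply the functor $\D_{\dif,K}$ to the defining short exact sequence
\[
0\rightarrow(\B_{\rig,K}^{\dagger}\otimes_{K}E)(\delta_{2})\rightarrow\D_{\rig,K}^{\dagger}(s)\rightarrow(\B_{\rig,K}^{\dagger}\otimes_{K}E)(\delta_{1})\rightarrow0.
\]
By the exactness statement built into Proposition 3.3 (extended to $K$-analytic $(\varphi_{q},\Gamma_{K})$-modules via Remark 5.7), this produces a short exact sequence of free $K_{\infty}((t_{K}))\otimes_{K}E$-modules with compatible $\Gamma_{K}$-action. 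Any rank-one $K$-analytic $(\varphi_{q},\Gamma_{K})$-module with integer weight $m$ is automatically $K$-de Rham: writing the character as $x^{m}\eta$ with $\eta$ finite-order on $\mathcal{O}_{K}^{\times}$, a $\Gamma_{K}$-fixed period in $\D_{\dif,K}$ is given explicitly by $u^{-1}t_{K}^{-m}e_{\delta}$, where $u\in K_{\infty}$ trivializes $\eta$ after possibly enlarging $E$. Twisting by $(\B_{\rig,K}^{\dagger}\otimes_{K}E)(\delta_{2}^{-1})$, which is itself $K$-de Rham by this remark, reduces to the case $\delta_{2}=1$ with quotient character $\delta:=\delta_{1}\delta_{2}^{-1}$ of integer weight $n:=w(\delta_{1})-w(\delta_{2})\geq1$.

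Next I use the $\mathscr{S}_{+}^{\an}$ constraints $\val_{\pi}(\delta_{1}\delta_{2}(\pi))=0$ and $\val_{\pi}(\delta_{1}(\pi))\geq0$ to rule out the non-generic configurations of Theorem 6.2. The only non-generic option compatible with $w(\delta)\geq1$ is $\delta=\mu_{q^{-1}}x^{n}$; but then $\val_{\pi}(\delta(\pi))=-1$, which combined with $\val_{\pi}(\delta_{1}\delta_{2}(\pi))=0$ forces $\val_{\pi}(\delta_{1}(\pi))=-\tfrac{1}{2}<0$, contradicting the hypothesis. Hence by Theorem 6.2, $\dim_{E}\H_{\an}^{1}(\delta)=1$, so up to isomorphism there is a unique non-split $K$-analytic extension of $(\B_{\rig,K}^{\dagger}\otimes_{K}E)(\delta)$ by $\B_{\rig,K}^{\dagger}\otimes_{K}E$.

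Finally, I would exhibit one explicit $K$-de Rham member of this one-dimensional space; since the $K$-de Rham classes in $\H_{\an}^{1}(\delta)$ form an $E$-linear subspace, producing a single non-zero $K$-de Rham element forces the entirety of $\H_{\an}^{1}(\delta)$ to be $K$-de Rham, and in particular $\D_{\rig,K}^{\dagger}(s)$. For this, one takes a two-dimensional $K$-analytic $K$-potentially semistable Galois representation of $G_{K}$ with the prescribed Hodge-Tate weights $0$ and $n$ and the correct rank-one subquotient characters---produced by an appropriate Kummer-style Lubin-Tate extension attached to a uniformizer of $K$, possibly after enlarging $E$---and applies Corollary 3.10 to conclude that its associated $(\varphi_{q},\Gamma_{K})$-module is $K$-de Rham. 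The main obstacle is precisely this final step: matching up the $K$-de Rham witness with the exact prescribed characters $\delta_{1}$ and $\delta_{2}$ requires careful bookkeeping of finite-order twists, and (when $\delta_{1}$ or $\delta_{2}$ is non-unitary) of a compensating twist by $\mu_{\lambda}$ whose effect cancels out after reducing back to our fixed extension class, though both modifications are harmless thanks to the Galois descent statement of Lemma 5.4.
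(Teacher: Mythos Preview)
Your argument has a genuine gap in the dimension count. First, a minor point: you have written the defining extension with $\delta_{2}$ as the subobject, but in the paper's convention (see \S6.2) it is $\delta_{1}$ that sits inside $\D_{\rig,K}^{\dagger}(s)$; this only affects bookkeeping, so I will phrase things in the paper's convention with $\delta:=\delta_{1}\delta_{2}^{-1}$ of weight $n\geq1$.

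The substantive error is your claim that the non-generic case $\delta=\mu_{q^{-1}}x^{n}$ is excluded by the $\mathscr{S}_{+}^{\an}$ constraints. Your computation $\val_{\pi}(\delta(\pi))=-1$ is wrong: one has $\delta(\pi)=q^{-1}\pi^{n}$ and hence $\val_{\pi}(\delta(\pi))=n-[K:\Q_{p}]$. Combined with $\val_{\pi}(\delta_{1}(\pi))+\val_{\pi}(\delta_{2}(\pi))=0$ this gives $\val_{\pi}(\delta_{1}(\pi))=\tfrac{1}{2}\bigl(n-[K:\Q_{p}]\bigr)$, which is perfectly compatible with $\val_{\pi}(\delta_{1}(\pi))\geq0$ whenever $n\geq[K:\Q_{p}]$. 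This is exactly the region $\mathscr{S}_{+}^{\st}$ (cf.\ case 2(b) of Theorem~7.4), where $\dim_{E}\H_{\an}^{1}(\delta)=2$ and a single $K$-de Rham witness does not force the whole line through it to exhaust the space. Your strategy would therefore need \emph{two} linearly independent $K$-de Rham classes in this case, which your Kummer-style construction does not supply.

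Even in the generic case, your final step is not carried out: a ``Kummer-style Lubin-Tate extension attached to a uniformizer'' produces specific characters (essentially $1$ and $\chi_{\pi}$ up to unramified twist), not an extension with prescribed arbitrary $K$-analytic $\delta_{1},\delta_{2}$, and the passage through $\mu_{\lambda}$-twists and Lemma~5.4 that you sketch does not obviously repair this.

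The paper's proof avoids these issues entirely by a direct computation on $\D_{\dif,K}^{+}$: after normalising so the subobject is trivial, one lifts a $\Gamma_{K}$-fixed vector $e_{\delta}$ from the quotient and observes that $\nabla_{\dif,K}$ has matrix $\begin{pmatrix}0&f\\0&0\end{pmatrix}$ in the basis $e,e_{\delta}$. Lemma~6.3 forces $t_{K}\mid f$, and since $\nabla_{\dif,K}=t_{K}\,\partial/\partial t_{K}$ surjects onto $t_{K}K_{\infty}[[t_{K}]]\otimes_{K}E$, one solves $\nabla_{\dif,K}(h)=f$ and obtains the horizontal basis $e,\,e_{\delta}-he$, concluding via Proposition~3.7. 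This handles the generic and non-generic cases uniformly.
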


\begin{proof}
We may assume that $\delta_{1}=1$. Write $\delta=\delta_{2}$ so
that and $w(\delta)<0$. Then $\D_{\dif,K}^{+}=\D_{\dif,K}^{+}\left(\D_{\rig,K}^{\dagger}\right)$
is an extension of the form 
\[
0\rightarrow K_{\infty}[[t_{K}]]\otimes_{K}E\rightarrow\D_{\dif,K}^{+}\rightarrow\D_{\dif,K}^{+}\left(\left(\B_{\rig,K}^{\dagger}\otimes_{K}E\right)(\delta)\right)\rightarrow0.
\]
Take $e_{\delta}\in\D_{\dR,K}^{+}\left(\left(\B_{\rig,K}^{\dagger}\otimes_{K}E\right)(\delta)\right)=\D_{\dif,K}^{+}\left(\left(\B_{\rig,K}^{\dagger}\otimes_{K}E\right)(\delta)\right)^{\Gamma_{K}=1}$
and lift it to $\D_{\dif,K}^{+}$. If we take $e=1\in K_{\infty}[[t_{K}]]\otimes_{K}E$
then $e,e_{\delta}$ is a basis of $\D_{\dif,K}^{+}$, and the action
of $\nabla_{\dif,K}$ on $\D_{\dif,K}^{+}$ in the basis $e,e_{\delta}$
is given by
\[
\Mat(\nabla_{\dif,K})=\left(\begin{array}{cc}
0 & f\\
0 & 0
\end{array}\right),
\]
where $f\in K_{\infty}[[t_{K}]]\otimes_{K}E$. As $w(\delta)<0$,
we have that $e_{\delta}$ is divisible by $t_{K}$ so by Lemma 6.3
we have that $f$ is divisible by $t_{K}$. As $\nabla_{\dif,K}=t_{K}\frac{\partial}{\partial t_{K}}$
on $K_{\infty}[[t_{K}]]$ we may find an $h\in K_{\infty}[[t_{K}]]\otimes_{K}E$
with $\nabla_{\dif,K}(h)=f$. Then $e$ and $e_{\delta}-he$ give
a full set of sections for $\nabla_{\dif,K}$ on $\D_{\dif,K}^{+}$.
By Proposition 3.7 (which applies according to Remark 5.6) we are
done.
\end{proof}
\begin{rem}
The argument of Corollary 6.4 can be generalized to show that if $\D_{\rig,K}^{\dagger}$
is a $K$-analytic Lubin-Tate trianguline $\left(\varphi_{q},\Gamma_{K}\right)$-module
which is a successive extension of $\left(\B_{\rig,K}^{\dagger}\otimes_{K}E\right)(\delta_{i})$
for $i=1,...,d$ with $w(\delta_{i})$ integers satisfying $w(\delta_{1})>...>w(\delta_{d})$,
then $\D_{\rig,K}^{\dagger}$ is $K$-de Rham.
\end{rem}

\subsection{Lubin-Tate triangulation of a $p$-adic representation of dimension
2}

The following generalizes Lemma 2.4.2 of \cite{BC09}. Using Theorem
5.5, we may follow the same proof of loc. cit.
\begin{prop}
Let $\D_{\rig,K}^{\dagger}$ be a $K$-analytic $\left(\varphi_{q},\Gamma_{K}\right)$-module
over $\B_{\rig,K}^{\dagger}\otimes_{K}E$ , $\alpha\in E^{\times}$
and $i\in\Z$. If $x\in\D_{\cris,K}\left(\D_{\rig,K}^{\dagger}\right)^{\varphi_{q}=\alpha}\subset\D_{\rig,K}^{\dagger}[1/t_{K}]$
, then $x\in\Fil^{i}\left(\D_{\cris,K}\left(\D_{\rig,K}^{\dagger}\right)\right)$
if and only if $x\in t_{K}^{i}\D_{\rig,K}^{\dagger}$.
\end{prop}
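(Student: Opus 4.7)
The plan is to unwind the definition of the filtration in Remark 5.8 into a pointwise condition $\iota_n(x) \in t_K^i \D_{\dif,K}^+(\D_{\rig,K}^{\dagger})$ at each level $n$, and then reduce the proposition to a local-to-global divisibility statement in $\D_{\rig,K}^{\dagger}$. The $\varphi_q$-eigenvector hypothesis is used precisely to bootstrap this pointwise condition from a single large $n$ (all that the definition of $\Fil^i$ directly supplies) to every sufficiently large $n$.

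By Remark 5.8, $x \in \Fil^i \D_{\cris,K}$ is equivalent to $\iota_n(\varphi_q^n(x)) \in t_K^i \D_{\dif,K}^+$ for $n \gg 0$. Since $\varphi_q(x) = \alpha x$ with $\alpha \in E^{\times}$ and $\iota_n(t_K) = \pi^{-n} t_K$ is a unit multiple of $t_K$ in $\B_{\dR}^+$ (using $\iota_n = \iota_0 \circ \varphi_q^{-n}$ and $\varphi_q(t_K) = \pi t_K$), this translates to the condition that $\iota_n(x) \in t_K^i \D_{\dif,K}^+$ for all $n \gg 0$. The backward direction is then immediate: if $x = t_K^i z$ with $z \in \D_{\rig,K}^{\dagger}$, then applying $\iota_n$ yields $\iota_n(x) \in t_K^i \D_{\dif,K}^+$ for $n \gg 0$, so $x \in \Fil^i \D_{\cris,K}$. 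For the forward direction, I would write $x = t_K^{-j} y$ for some $j \in \Z$ and $y \in \D_{\rig,K}^{\dagger,r}$ (enlarging $r$ as needed), so that our hypothesis becomes $\iota_n(y) \in t_K^{i+j} \D_{\dif,K}^+$ for all $n \gg 0$. The proposition then reduces to the divisibility claim that if $y \in \D_{\rig,K}^{\dagger,r}$ satisfies $\iota_n(y) \in t_K^k \D_{\dif,K}^+$ for all $n$ sufficiently large, then $y \in t_K^k \D_{\rig,K}^{\dagger,r'}$ for some $r' \geq r$.

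The main obstacle is this divisibility claim. In the cyclotomic case $K = \Q_p$ it is essentially the content of the proof of Lemma 2.4.2 of \cite{BC09}, and rests on the fact that $t = \log(1+T)$ has simple zeros precisely at the $p^n$-torsion of $\mathbb{G}_m$, combined with a Weierstrass-style factorization in the Robba ring. The same strategy transports to the Lubin-Tate setting essentially verbatim, since $t_K = \log_{\mathcal{F}}(T)$ has simple zeros at the Lubin-Tate torsion points $u_n \in \mathcal{F}[\pi^n]$, which are precisely the evaluation points of $\iota_n$; an induction on $k$ combined with a Weierstrass-type factorization at each stage (possibly enlarging $r$) establishes the claim. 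Applying it to $y$ with $k = i + j$ yields $y \in t_K^{i+j} \D_{\rig,K}^{\dagger}$, and therefore $x = t_K^{-j} y \in t_K^i \D_{\rig,K}^{\dagger}$, completing the proof.
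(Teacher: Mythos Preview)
Your proposal is correct and follows essentially the same route as the paper. The paper first reduces to $i=0$ by twisting and then cites Lemma~4.6 of \cite{Be02} for the local-to-global divisibility step, whereas you keep $i$ general and describe the Weierstrass-type factorization directly; the mathematical content is the same.
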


\begin{proof}
We may reduce to the case $i=0$ by twisting. By Theorem 5.5, $\Fil^{0}\left(\D_{\rig,K}^{\dagger}\right)=\D_{\cris,K}^{+}\left(\D_{\rig,K}^{\dagger}\right)=\left(\D_{\rig,K}^{\dagger}\right)^{\Gamma_{K}}$
and $\D_{\dR,K}^{+}\left(\D_{\rig,K}^{\dagger}\right)=\D_{\dif,K}^{+}\left(\D_{\rig,K}^{\dagger}\right)^{\Gamma_{K}}$.
Recall that for $n\gg0$ we have an injection
\[
\iota_{n}:\D_{\cris,K}\left(\D_{\rig,K}^{\dagger}\right)\hookrightarrow\D_{\dR,K}\left(\D_{\rig,K}^{\dagger}\right)
\]
and by the definition of filtration in Remark 5.8, we have

\[
\varphi_{q}^{n}\left(\D_{\cris,K}^{+}\left(\D_{\rig,K}^{\dagger}\right)\right)=\iota_{n}^{-1}\left(\D_{\dR,K}^{+}\left(\D_{\rig,K}^{\dagger}\right)\right).
\]

If $x\in\D_{\rig,K}^{\dagger}\cap\D_{\cris,K}\left(\D_{\rig,K}^{\dagger}\right)^{\varphi_{q}=\alpha}$
we have $\iota_{n}(x)\in\iota_{n}\left(\D_{\rig,K}^{\dagger}\right)\subset\D_{\dR,K}^{+}\left(\D_{\rig,K}^{\dagger}\right)$
for $n\gg0$, so $x\in\varphi_{q}^{n}\left(\D_{\cris,K}^{+}\left(\D_{\rig,K}^{\dagger}\right)\right)$
and the relation $x=\alpha^{n}\varphi_{q}^{-n}(x)$ implies $x\in\D_{\cris,K}^{+}\left(\D_{\rig,K}^{\dagger}\right)$.
Conversely, if $x\in\D_{\cris,K}^{+}\left(\D_{\rig,K}^{\dagger}\right)$
satisfies $\varphi_{q}(x)=x\alpha$, then $x=\alpha^{-n}\varphi_{q}^{n}(x)$,
so $\iota_{n}\left(x\right)$ lies in $\D_{\dR,K}^{+}\left(\D_{\rig,K}^{\dagger}\right)$
for $n\gg0$. Choose a $\B_{\rig,K}^{\dagger,r}$-basis $e_{1},...,e_{d}$
of $\D_{\rig,K}^{\dagger,r}$ for $r\gg0$, and write $x=\sum f_{j}e_{j}$
for $f_{j}\in\B_{\rig,K}^{\dagger,r}[1/t_{K}]$. For each $j$, we
have that $\iota_{n}\left(f_{j}\right)$ lies in $K_{n}\left[\left[t_{K}\right]\right]$
for $n\gg0$. This implies by Lemma 4.6 of \cite{Be02} that each
$f_{j}\in\B_{\rig,K}^{\dagger,r}$, whence $x\in\D_{\rig,K}^{\dagger}$. 
\end{proof}
Following $\mathsection3$ of \cite{Ch08}, we compute the triangulation
of a representation of dimension $2$ in terms of a crystalline period.
\begin{prop}
Let $V$ be a 2-dimensional $E$-linear $K$-analytic representation
of $G_{K}$. Then $V$ is Lubin-Tate trianguline if and only if there
exists a $K$-analytic character $\eta:G_{K}\rightarrow\mathcal{O}_{E}^{\times}$
and $\alpha\in E^{\times}$ such that $\D_{\cris,K}(V(\eta))^{\varphi_{q}=\alpha}\neq0$.
Moreover, if $i$ is the largest integer such that $\Fil^{i}\D_{\cris,K}(V(\eta))^{\varphi_{q}=\alpha}\nsubseteq\Fil^{i+1}\D_{\cris,K}(V(\eta))^{\varphi_{q}=\alpha}$,
then $\D_{\rig,K}^{\dagger}(V)$ is an extension of $\left(\B_{\rig,K}^{\dagger}\otimes_{K}E\right)(\delta_{1})$
by $\left(\B_{\rig,K}^{\dagger}\otimes_{K}E\right)(\delta_{2})$ where
$\delta_{1}=\eta^{-1}\mu_{\alpha}x^{-i}$ and $\delta_{2}=\eta\mu_{\alpha^{-1}}x^{i}\det(V)$.
\end{prop}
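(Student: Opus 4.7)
The plan is to follow the method of \S3 of \cite{Ch08}, with Theorem 5.5 and Proposition 6.6 serving as the Lubin-Tate substitutes for the corresponding cyclotomic ingredients. I will establish the backward implication first, since its proof simultaneously produces the explicit triangulation asserted in the ``moreover'' part. Given a nonzero vector $x\in\D_{\cris,K}(V(\eta))^{\varphi_{q}=\alpha}$, Proposition 6.6 identifies the largest integer $i$ with $x\in\Fil^{i}\D_{\cris,K}(V(\eta))^{\varphi_{q}=\alpha}$ as the largest $i$ for which $x\in t_{K}^{i}\D_{\rig,K}^{\dagger}(V(\eta))$. Writing $x=t_{K}^{i}y$, the identities $\varphi_{q}(t_{K})=\pi t_{K}$ and $g(t_{K})=\chi_{\pi}(g)t_{K}$ for $g\in\Gamma_{K}$ force $\varphi_{q}(y)=\alpha\pi^{-i}y$ and $g(y)=\chi_{\pi}(g)^{-i}y$, so that $y$ transforms by $\mu_{\alpha}x^{-i}\in\mathscr{I}_{\mathrm{an}}$.

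The heart of the argument is showing that the rank-one submodule $(\B_{\rig,K}^{\dagger}\otimes_{K}E)\cdot y\subset\D_{\rig,K}^{\dagger}(V(\eta))$ is saturated. Its saturation $F^{\mathrm{sat}}$ is a rank-one $K$-analytic $(\varphi_{q},\Gamma_{K})$-submodule, so by Proposition 1.9 of \cite{FX13} it has the form $(\B_{\rig,K}^{\dagger}\otimes_{K}E)(\delta')$ for some $\delta'\in\mathscr{I}_{\mathrm{an}}$. Writing $y=f\cdot e_{\delta'}$, the element $f\in\B_{\rig,K}^{\dagger}\otimes_{K}E$ is itself a joint $(\varphi_{q},\Gamma_{K})$-eigenvector; the only such eigenvectors, up to scalar, are nonnegative powers of $t_{K}$, so $y=c\,t_{K}^{j}e_{\delta'}$ with $j\geq0$. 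If $j\geq1$, then $y\in t_{K}F^{\mathrm{sat}}\subset t_{K}\D_{\rig,K}^{\dagger}(V(\eta))$, whence $x\in t_{K}^{i+1}\D_{\rig,K}^{\dagger}(V(\eta))$, contradicting the maximality of $i$. Thus $j=0$, $F^{\mathrm{sat}}=(\B_{\rig,K}^{\dagger}\otimes_{K}E)(\mu_{\alpha}x^{-i})$, and the quotient, being a rank-one $K$-analytic module, is isomorphic to $(\B_{\rig,K}^{\dagger}\otimes_{K}E)(\delta_{2}')$ with $\delta_{2}'=\mu_{\alpha^{-1}}x^{i}\det(V)\eta^{2}$ by the determinant computation. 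Untwisting by $\eta^{-1}$ produces the triangulation of $\D_{\rig,K}^{\dagger}(V)$ with $\delta_{1}=\eta^{-1}\mu_{\alpha}x^{-i}$ and $\delta_{2}=\eta\mu_{\alpha^{-1}}x^{i}\det(V)$.

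For the converse implication, I start from an arbitrary triangulation of $\D_{\rig,K}^{\dagger}(V)$ with sub-parameter $\delta_{1}^{0}$, pick any $i\in\Z$, and choose $\alpha\in E^{\times}$ with $\val_{\pi}(\alpha)=i+\val_{\pi}(\delta_{1}^{0}(\pi))$. Setting $\eta=\mu_{\alpha}x^{-i}(\delta_{1}^{0})^{-1}$, this is $K$-analytic as a product of $K$-analytic characters, and unitary: $\eta(\pi)\in\mathcal{O}_{E}^{\times}$ by the choice of valuation, while continuity on the compact group $\mathcal{O}_{K}^{\times}$ automatically forces $\eta(\mathcal{O}_{K}^{\times})\subset\mathcal{O}_{E}^{\times}$. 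After twisting, the subline $(\B_{\rig,K}^{\dagger}\otimes_{K}E)(\mu_{\alpha}x^{-i})$ of $\D_{\rig,K}^{\dagger}(V(\eta))$ has a basis $e$ on which $t_{K}^{i}e$ is $\Gamma_{K}$-fixed with $\varphi_{q}$-eigenvalue $\alpha$, yielding via Theorem 5.5 a nonzero element of $\D_{\cris,K}(V(\eta))^{\varphi_{q}=\alpha}$.

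The main obstacle I anticipate is the saturation step, and in particular the classification of joint $(\varphi_{q},\Gamma_{K})$-eigenvectors in $\B_{\rig,K}^{\dagger}\otimes_{K}E$ as scalar multiples of nonnegative powers of $t_{K}$. This is the Lubin-Tate counterpart of a standard cyclotomic fact, and should follow by applying Proposition 1.9 of \cite{FX13} to the rank-one $K$-analytic submodule generated by such an eigenvector and matching eigenvalues under $\varphi_{q}$ and $\Gamma_{K}$.
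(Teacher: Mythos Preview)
Your proof is correct and follows essentially the same strategy as the paper's, with only cosmetic differences: the paper twists by a power of $\chi_{\pi}$ to reduce to $i=0$ and phrases the saturation step via the coefficient ideal $I=(t_{K}^{n})$ (invoking the B\'ezout property and Lemma~1.1 of \cite{FX13}), whereas you work directly with $x=t_{K}^{i}y$ and argue via the saturation and eigenvector classification, but both routes rest on Proposition~6.6 and the same classification of $(\varphi_{q},\Gamma_{K})$-stable principal ideals. Your forward direction is slightly more elaborate than the paper's (which simply takes $\eta(g)=\delta^{-1}(\chi_{\pi}(g))$ and $\alpha=\delta(\pi)$), but is equally valid.
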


\begin{proof}
If $V$ is Lubin-Tate trianguline, then $\D_{\mathrm{rig,K}}^{\dagger}(V)$
contains a submodule of rank 1 isomorphic to $\left(\B_{\rig,K}^{\dagger}\otimes_{K}E\right)(\delta)$
for some $\delta\in\mathscr{I}_{\mathrm{an}}$. Taking $\eta:G_{K}\rightarrow\mathcal{O}_{E}^{\times}$
defined by $\eta(g)=\delta^{-1}(\chi_{\pi}(g))$ we have $\D_{\cris,L}^{+}(V(\eta))^{\varphi_{q}=\delta(\pi)}=\D_{\mathrm{rig}}^{\dagger}(V(\eta))^{\Gamma_{L}=1,\varphi_{q}=\delta(\pi)}\neq0$.
Conversely, suppose that such an $\alpha$ and $\eta$ exist. We shall
shows $V$ is Lubin-Tate trianguline with the described triangulation.
Twisting by a power of $\chi_{\pi}$, we may assume that $i=0$ and
that $\D_{\cris,K}^{+}(V(\eta))^{\varphi_{q}=\alpha}$ contains an
element $f\notin\Fil^{1}\D_{\cris,K}(V(\eta))^{\varphi_{q}=\alpha}$.
By what we have proven in $\mathsection5$, we have
\[
\D_{\cris,K}^{+}(V(\eta))^{\varphi_{q}=\alpha}=\D_{\mathrm{rig},K}^{\dagger}(V(\eta))^{\Gamma_{K}=1,\varphi_{q}=\alpha},
\]
so $f\in\D_{\mathrm{rig},K}^{\dagger}(V(\eta))^{\Gamma_{K}=1,\varphi_{q}=\alpha}$.
By taking its span and twisting by $\eta^{-1}$ we get a rank 1 sub
$\left(\varphi_{q},\Gamma_{K}\right)$-module of $\D_{\mathrm{rig},K}^{\dagger}(V)$.
The ideal $I$ generated by the coefficients of $f$ in a basis of
in $\D_{\mathrm{rig},K}^{\dagger}(V(\eta))$ is stable under the actions
of $\varphi_{q}$ and $\Gamma_{K}$. As $\B_{\rig,K}^{\dagger}\otimes_{K}E$
is a Bézout domain and $I$ is finitely generated, it is principal,
and we conclude from Lemma 1.1 of \cite{FX13} that $I=\left(t_{K}^{n}\right)$
for $n\in\Z_{\geq0}$. Proposition 6.6 shows that $n=0$, and this
means that 
\[
\left(\B_{\rig,K}^{\dagger}\otimes_{K}E\right)\cdot f(\eta^{-1})\cong\left(\B_{\rig,K}^{\dagger}\otimes_{K}E\right)\left(\eta^{-1}\mu_{\alpha}\right)
\]
 is a rank 1 saturated submodule of $\D_{\rig,K}^{\dagger}(V)$. We
then have 
\[
\D_{\rig,K}^{\dagger}(V)/\left(\B_{\rig,K}^{\dagger}\otimes_{K}E\right)\left(\eta^{-1}\mu_{\alpha}\right)\cong\left(\B_{\rig,K}^{\dagger}\otimes_{K}E\right)\left(\eta\mu_{\alpha^{-1}}\det(V)\right)
\]
 by the classification of $\left(\varphi_{q},\Gamma_{K}\right)$-modules
of rank 1.
\end{proof}
Finally, we conclude with the proof of Theorem B from the introduction.
To do so, we first recall what are cyclotomic trianguline representations.
Let $K_{\infty}^{\cyc}=K(\mu_{p^{\infty}})$ be the cyclotomic extension
of $K$ and let $K_{0}'$ be the maximal unramified extension of $K_{0}$
in $K_{\infty}^{\cyc}$. The ring $\B_{\rig,K}^{\dagger,\mathrm{cyc}}$
is the ring of power series $\sum_{n\in\Z}a_{n}T^{n}$ with $a_{n}\in K_{0}'$
and such that $f(T)$ converges on some nonempty annulus $r<|T|<1$.
The ring is endowed with a $\Frob_{p}$-semilinear $\varphi$ action
and a semilinear $\Gamma_{K}^{\mathrm{cyc}}$-action. If $K=K_{0}$
then $\varphi(T)=(1+T)^{p}-1$ and $\gamma(T)=(1+T)^{\chi_{\mathrm{cyc}}(\gamma)}-1$,
but in general the action has to do with the theory of lifting the
field of norms and is more complicated.

We can then define a notion of a $\left(\varphi,\Gamma_{K}^{\mathrm{cyc}}\right)$-module
over $\B_{\rig,K}^{\dagger,\mathrm{cyc}}\otimes_{\Q_{p}}E$ analogous
to the notion of a $(\varphi,\Gamma_{K})$-module over $\B_{\rig,K}^{\dagger}\otimes_{\Q_{p}}E$.
If $V$ is an $E$-linear representation of $G_{K}$, one can associate
to $V$ a $\left(\varphi,\Gamma_{K}^{\mathrm{cyc}}\right)$-module
$\D_{\rig,K}^{\dagger,\cyc}(V)$ over $\B_{\rig,K}^{\dagger,\mathrm{cyc}}\otimes_{\Q_{p}}E$.
Now let $\delta:K^{\times}\rightarrow E^{\times}$ a continuous character;
we can define a $\left(\varphi,\Gamma_{K}^{\mathrm{cyc}}\right)$-module
$\left(\B_{\rig,K}^{\dagger,\mathrm{cyc}}\otimes_{\Q_{p}}E\right)\left(\delta\right)$
in the following way. If $\delta$ is unitary, then it corresponds
to a character $\delta:G_{K}\rightarrow E^{\times}$, and we set $\left(\B_{\rig,K}^{\dagger,\mathrm{cyc}}\otimes_{\Q_{p}}E\right)\left(\delta\right)=\D_{\rig,K}^{\dagger,\cyc}(V)(E(\delta))$.
If $\delta|_{\mathcal{O}_{K}^{\times}}=1$, set 
\[
\left(\B_{\rig,K}^{\dagger,\mathrm{cyc}}\otimes_{\Q_{p}}E\right)\left(\delta\right)=\left(\B_{\rig,K}^{\dagger,\mathrm{cyc}}\otimes_{\Q_{p}}E\right)\left[\varphi\right]\otimes_{\left(\B_{\rig,K}^{\dagger,\mathrm{cyc}}\otimes_{\Q_{p}}E\right)\left[\varphi_{q}\right]}Ee_{\delta},
\]
where $\varphi_{q}(e_{\delta})=\delta(\pi)e_{\delta}$. For general
$\delta$, write $\delta=\delta_{1}\delta_{2}$ where $\delta_{1}$
is unitary and $\delta_{2}|_{\mathcal{O}_{K}^{\times}}$ and set $\left(\B_{\rig,K}^{\dagger,\mathrm{cyc}}\otimes_{\Q_{p}}E\right)\left(\delta\right)=\left(\B_{\rig,K}^{\dagger,\mathrm{cyc}}\otimes_{\Q_{p}}E\right)\left(\delta_{1}\right)\otimes\left(\B_{\rig,K}^{\dagger,\mathrm{cyc}}\otimes_{\Q_{p}}E\right)\left(\delta_{2}\right)$.

An an $E$-linear representation $V$ of $G_{K}$ is said to be cyclotomic
trianguline if $\D_{\rig,K}^{\dagger,\cyc}(V)$ is a successive extension
$\left(\varphi,\Gamma_{K}^{\cyc}\right)$-modules of the form $\left(\B_{\rig,K}^{\dagger,\mathrm{cyc}}\otimes_{\Q_{p}}E\right)\left(\delta\right)$.
This is the same notion of triangulinity which appears in \cite{Na09,KPX14,Li12},
but we give it a different name here to distinguish it from Lubin-Tate
triangulinity.
\begin{thm}
Let $V$ be a 2-dimensional $E$-linear $K$-analytic representation
of $G_{K}$. The following are equivalent.
\end{thm}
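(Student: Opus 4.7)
The plan is to reformulate each triangulinity condition in terms of the existence of a crystalline Frobenius eigenvector after a character twist, and then to identify the two reformulations via Lemma 3.11 on the class of $K$-analytic representations.

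Proposition 6.7 supplies the Lubin-Tate reformulation: $V$ is Lubin-Tate trianguline iff there exist a $K$-analytic character $\eta$ of $G_K$ and $\alpha\in E^\times$ with $\D_{\cris,K}(V(\eta))^{\varphi_q=\alpha}\neq 0$. On the cyclotomic side, the analogous criterion traces back to Colmez \cite{Co08} and is made explicit by Chenevier \cite{Ch08}, with the extension to arbitrary $K$ available via Nakamura \cite{Na09}: $V$ is cyclotomic trianguline iff there exist a character $\eta'$ of $G_K$ and $\alpha'\in E^\times$ with $\D_{\cris,\Q_p}(V(\eta'))^{\varphi=\alpha'}\neq 0$. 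One either invokes this directly or reproves it by following the argument of Proposition 6.7 with Berger's original cyclotomic dictionary \cite{Be02} replacing Theorem 5.5. Enlarging $E$ to pass from $\varphi$-eigenvectors to $\varphi_q$-eigenvectors is harmless by Lemma 5.4.

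Lemma 3.11 is the bridge: for any $K$-analytic $W$ and $\beta\in E^\times$,
\[
\D_{\cris,\Q_p}(W)^{\varphi_q=\beta}=(K_0\otimes_{\Q_p}E)[\varphi]\otimes_{E[\varphi_q]}\D_{\cris,K}(W)^{\varphi_q=\beta},
\]
so the two spaces vanish together. The direction from Lubin-Tate triangulinity to cyclotomic triangulinity is then immediate: a $K$-analytic $\eta$ and $\alpha$ from Proposition 6.7 yield, via Lemma 3.11, a nonzero crystalline $\varphi_q$-eigenvector in $\D_{\cris,\Q_p}(V(\eta))$, which decomposes into $\varphi$-eigenvectors upon adjoining an $f$-th root of $\alpha$ to $E$; the cyclotomic criterion then produces a cyclotomic triangulation.

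The main obstacle is the reverse direction, because the character $\eta'$ from the cyclotomic criterion need not be $K$-analytic, whereas Proposition 6.7 demands $K$-analyticity. Since $V$ is $K$-analytic and $V(\eta')$ has a crystalline period, $V(\eta')$ is Hodge-Tate in the usual sense, and so the Sen weights of $\eta'$ at each embedding $\tau\in\Sigma_K\setminus\{\Id\}$ are integers $n_\tau$. The plan is to correct $\eta'$ by multiplying by a character $\mu$ with $\Id$-Sen weight $0$ and $\tau$-Sen weight $-n_\tau$ for $\tau\neq\Id$; such a $\mu$ exists up to finite-order twist, for example as a product of $\tau$-components of Lubin-Tate characters, echoing the period elements $\xi_\tau$ of Proposition 2.5. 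The corrected character $\eta:=\eta'\mu$ is then $K$-analytic, and the crystalline $\varphi$-eigenvector for $V(\eta')$ transports to one for $V(\eta)$ through the Galois-equivariant action of $\mu$ on the period rings, yielding a nonzero element of $\D_{\cris,\Q_p}(V(\eta))^{\varphi_q=\alpha}$ for a suitable $\alpha\in E^\times$. Lemma 3.11 transports this to $\D_{\cris,K}(V(\eta))^{\varphi_q=\alpha}\neq 0$, and Proposition 6.7 concludes that $V$ is Lubin-Tate trianguline.
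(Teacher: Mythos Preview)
Your overall architecture matches the paper's: (3)$\Leftrightarrow$(4) via Proposition~6.7, (2)$\Leftrightarrow$(3) via Lemma~3.11, and (1)$\Leftrightarrow$(2) via the cyclotomic dictionary. The divergence is in the implication (1)$\Rightarrow$(2), where $\eta$ is required to be $K$-analytic.

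You invoke the cyclotomic criterion with an arbitrary twist $\eta'$ and then try to correct $\eta'$ into a $K$-analytic $\eta$. This detour has two weak points. First, the claim that ``$V(\eta')$ is Hodge--Tate in the usual sense'' is false as stated and is not what you need: a single crystalline period does not make a $2$-dimensional representation Hodge--Tate. What is true, and sufficient, is that the crystalline period forces $\D_{\dR}(V(\eta'))_\tau\neq 0$ for every $\tau$ (the $\varphi$-action permutes the components of $K_0\otimes_{\Q_p}E$), and since for $\tau\neq\Id$ the $\tau$-Sen operator of $V(\eta')$ is the scalar $w_\tau(\eta')$ (because $V$ is $K$-analytic), this scalar must be an integer. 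Second, transporting the crystalline eigenvector along the twist by $\mu$ requires $\mu$ to be \emph{crystalline}, not merely to have the right Sen weights; you should say explicitly that one may choose a crystalline character with Hodge--Tate weights $(0,-n_\tau)_{\tau\neq\Id}$, so that $\D_{\cris}(V(\eta'))\otimes\D_{\cris}(\mu)\hookrightarrow\D_{\cris}(V(\eta))$ yields the nonvanishing. With these fixes your argument goes through.

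The paper bypasses the correction step entirely. If $(\B_{\rig,K}^{\dagger,\cyc}\otimes_{\Q_p}E)(\delta_1)$ is a subobject of $\D_{\rig,K}^{\dagger,\cyc}(V)$, then the $\tau$-Sen weight $w_\tau(\delta_1)$ is an eigenvalue of the $\tau$-Sen operator of $V$, which is zero for $\tau\neq\Id$ by $K$-analyticity of $V$; hence $\delta_1$ is already $K$-analytic, and one twists directly by $\eta=\delta_1|_{\mathcal{O}_K^\times}^{-1}$. The remaining passage between the cyclotomic triangulation and the crystalline eigenvector is handled by citing \cite[Example~6.2.6 and Corollary~6.2.9]{KPX14} rather than reproving Proposition~6.7 in the cyclotomic setting. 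Your route is salvageable, but the paper's observation that $K$-analyticity of $V$ propagates to the parameters of any cyclotomic triangulation is both shorter and conceptually cleaner.
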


\begin{enumerate}
\item \emph{$V$ is cyclotomic trianguline.}
\item \emph{There exists a $K$-analytic character $\eta:\mathcal{O}_{K}^{\times}\rightarrow E^{\times}$
and $\alpha\in E^{\times}$ such that $\D_{\cris,\Q_{p}}(V(\eta))^{\varphi_{q}=\alpha}$
is nonzero.}
\item \emph{There exists a $K$-analytic character $\eta:\mathcal{O}_{K}^{\times}\rightarrow E^{\times}$
and $\alpha\in E^{\times}$ such that $\D_{\cris,K}(V(\eta))^{\varphi_{q}=\alpha}$
is nonzero.}
\item $V$ \emph{is Lubin-Tate trianguline.}
\end{enumerate}
\begin{proof}
The equivalence between 3 and 4 was proven in Proposition 6.7, while
the equivalence between 2 and 3 follows from Lemma 3.11. It remains
to prove the equivalence of 1 and 2. This equivalence seems to be
well known but due to a lack of suitable reference when $K\neq\Q_{p}$
we give a proof here.

If $V$ is cyclotomic trianguline, then $\D_{\rig,K}^{\dagger,\cyc}(V)$
can be written as an extension 
\[
0\rightarrow\left(\B_{\rig,K}^{\dagger,\mathrm{cyc}}\otimes_{\Q_{p}}E\right)\left(\delta_{1}\right)\rightarrow\D_{\rig,K}^{\dagger,\cyc}(V)\rightarrow\left(\B_{\rig,K}^{\dagger,\mathrm{cyc}}\otimes_{\Q_{p}}E\right)\left(\delta_{2}\right)\rightarrow0.
\]
Since $V$ is $K$-analytic, $\delta_{1}$ is also $K$-analytic.
Twisting by $\delta_{1}|_{\mathcal{O}_{K}^{\times}}^{-1}$, we may
assume $\delta_{1}|_{\mathcal{O}_{K}^{\times}}=1$. It then follows
from \cite[Example 6.2.6]{KPX14} that 
\[
\D_{\cris}\left(\left(\B_{\rig,K}^{\dagger,\mathrm{cyc}}\otimes_{\Q_{p}}E\right)\left(\delta_{1}\right)\right)=\mathrm{I}_{\Q_{p}}^{K}\left(Ee_{\delta_{1}}\right),
\]
where $\varphi_{q}(e_{\delta_{1}})=\delta_{1}(\pi)e_{\delta_{1}}$.
It follows that $\D_{\cris,\Q_{p}}(V)^{\varphi_{q}=\delta_{1}(\pi)}\neq0$.

Conversely, suppose that 2 holds. By replacing $V$ with a $K$-analytic
twist, we may assume that $\D_{\cris,\Q_{p}}^{+}(V)^{\varphi_{q}=\alpha}=\D_{\cris,\Q_{p}}(V)^{\varphi_{q}=\alpha}\neq0$.
It follows from Berger's dictionary that
\[
\D_{\rig,K}^{\dagger,\cyc}(V)^{\Gamma_{K}^{\cyc},\varphi_{q}=\alpha}=\D_{\cris,\Q_{p}}^{+}(V)^{\varphi_{q}\neq0},
\]
so that $\D_{\rig,K}^{\dagger,\cyc}(V)^{\Gamma_{K}^{\cyc},\varphi_{q}=\alpha}$
contains a $(\varphi_{q},\Gamma_{K}^{\cyc})$ invariant $E$-line,
and hence $\text{\ensuremath{\left(\B_{\rig,K}^{\dagger,\mathrm{cyc}}\otimes_{\Q_{p}}E\right)\left(\delta\right)}}$
where $\delta|_{\mathcal{O}_{K}^{\times}}=1$ and $\delta(\pi_{K})=\alpha$.
This sub $\B_{\rig,K}^{\dagger,\mathrm{cyc}}\otimes_{\Q_{p}}E$-module
may not be saturated, but it follows from \cite[Corollary 6.2.9]{KPX14}
that $\D_{\rig,K}^{\dagger,\cyc}(V)$ contains a saturated module
of the form $\left(\B_{\rig,K}^{\dagger,\mathrm{cyc}}\otimes_{\Q_{p}}E\right)\left(\delta'\right)$.
In particular, $\D_{\rig,K}^{\dagger,\cyc}(V)$ is an extension of
two rank 1 $(\varphi_{q},\Gamma_{K}^{\cyc})$-modules, so $V$ is
cyclotomic trianguline.
\end{proof}

\section{Overconvergent Hilbert modular forms}

\subsection{Overconvergent Hilbert eigenforms}

We briefly recall what we need about the cuspidal Hilbert eigenvariety
of Andreatta, Iovita and Pilloni (see \cite{AIP16}). 

Let $F$ be a totally real number field, $\Sigma$ the set of embeddings
of $F$ in $\overline{\Q}$ and $N\in\Z_{\geq4}$. A choice of an
embedding $\overline{\Q}\hookrightarrow\overline{\Q}_{p}$ determines
a decomposition $\Sigma=\amalg_{v:v|p}\Sigma_{F_{v}}$ where each
$v$ is a place of $F$ lying over $p$. Let $L$ be a finite extension
of $\Q_{p}$ which contains $F^{\Gal}$. The weight space for the
algebraic group $\Res_{\mathcal{O}_{F}/\Z}\GL_{2}$ is $\mathcal{W}=\mathrm{Spf}\left(\mathcal{O}_{L}\left[\left[\left(\mathcal{O}_{F}\otimes_{\Z}\Z_{p}\right)^{\times}\times\Z_{p}^{\times}\right]\right]\right)^{\rig}$.
If $f$ is a classical Hilbert eigenform on $F$ of tame level $N$,
its weight is a tuple $\mathrm{wt}(f)=\left(\left\{ k_{\tau}\right\} _{\tau\in\Sigma},w\right)\in\Z_{\geq1}^{\Sigma}\times\Z$
satisfying $k_{\tau}\equiv w\mod2$ for each $\tau\in\Sigma$. It
is then identified with the point in $\mathcal{W}$ corresponding
to the character $(z_{1},z_{2})\mapsto\left(\prod_{\tau\in\Sigma}\tau(z_{1})^{k_{\tau}}\right)z_{2}^{w}$.
The cuspidal Hilbert eigenvariety of tame level $N$ is a certain
rigid analytic space $\mathcal{E}$ which gives a $p$-adic interpolation
of classical Hilbert eigenforms. More precisely, it is a rigid analytic
space together with a weight map $\mathrm{wt}:\mathcal{E}\rightarrow\mathcal{W}$
whose points parametrize overconvergent Hilbert modular forms of finite
slope together with a choice of Hecke eigenvalues at places $v|p$.
We summarize its properties below (see $\mathsection5$ of \cite{AIP16}).
\begin{thm}
1. The map $\mathrm{wt}:\mathcal{E}\rightarrow\mathcal{W}$ is, locally
on $\mathcal{E}$ and $\mathcal{W}$, finite and surjective.

2. For each $\kappa\in\mathcal{W}(\C_{p})$, the fiber $\mathrm{wt}^{-1}(\mathcal{\kappa})$
is in bijection with finite slope Hecke eigenvalues appearing in the
space of overconvergent cusp forms of weight $\kappa$, level $N$
and coefficients in $\C_{p}$.

3. There exists a universal Hecke character $\lambda:\mathcal{H}^{Np}\otimes\mathcal{U}_{p}\rightarrow\mathcal{O}_{\mathcal{E}}$.
Here, $\mathcal{H}^{Np}$ is the abstract Hecke algebra away from
$Np$, and $\mathcal{U}_{p}$ is the $\Q_{p}$-algebra generated by
the $U_{v}$-operators for $v|p$.

4. There is a universal pseudo-character $T:\Gal(\overline{F}/F)\rightarrow\mathcal{O}_{\mathcal{E}}$
which is unramified for $\mathfrak{l}\nmid Np$ such that $T(\Frob_{\mathfrak{l}})=\lambda\left(T_{\mathfrak{l}}\right)$
for the arithmetic Frobenius $\Frob_{\mathfrak{l}}$.

5. For each $x\in\mathcal{E}$ there exists a semisimple Galois representation
$\rho_{x}:\Gal(\overline{F}/F)\rightarrow\GL_{2}\left(k(x)\right)$
which is unramified for $\mathfrak{l}\nmid Np$ and which is characterized
by $\Tr(\rho_{x})=T_{x}$ and $\det(\rho_{x})=\mathrm{Nm}_{F/\Q}\left(\mathfrak{l}\right)\lambda_{x}(S_{\mathfrak{l}})$.

6. The generalized Hodge-Tate weights of $\rho_{x}|_{G_{F_{v}}}$
are $\left\{ \frac{w-k_{\tau}}{2},\frac{w+k_{\tau}-2}{2}\right\} _{\tau\in\Sigma_{F_{v}}}$.
\end{thm}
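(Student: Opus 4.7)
The plan is to assemble items (1)--(6) from the existing literature, as none of them are genuinely new here. For (1)--(3), the construction of $\mathcal{E}$ together with the weight map $\mathrm{wt}:\mathcal{E}\to\mathcal{W}$ and the universal Hecke character $\lambda$ is exactly what is carried out in Section 5 of \cite{AIP16}: overconvergent modular sheaves of locally $\mathcal{W}$-analytic weight are built, the compact operator $U_{p}=\prod_{v\mid p}U_{v}$ acts on the resulting spaces of cusp forms, and the Buzzard--Coleman--Mazur eigenvariety machine produces $\mathcal{E}$ locally finite and surjective over $\mathcal{W}$, together with the desired interpretation of its fibers as systems of finite-slope Hecke eigenvalues.

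For (4), I would glue Galois pseudocharacters across $\mathcal{E}$ in the usual way. Classical points are Zariski-dense in $\mathcal{E}$, and to each such point one has the standard $2$-dimensional Galois representation attached to a classical Hilbert eigenform, unramified outside $Np$ and with trace of arithmetic Frobenius equal to $\lambda_{x}(T_{\mathfrak{l}})$. Since continuous pseudocharacters satisfy rigid-analytic interpolation, the assignment $\mathrm{Frob}_{\mathfrak{l}}\mapsto\lambda(T_{\mathfrak{l}})$ extends uniquely to a continuous $2$-dimensional pseudocharacter $T:\Gal(\overline{F}/F)\to\mathcal{O}_{\mathcal{E}}$ unramified outside $Np$. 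For (5), Taylor's theorem on pseudocharacters then produces a semisimple $\rho_{x}$ at each $x\in\mathcal{E}$ with the prescribed trace, and the determinant is pinned down by the formula $\det(\rho_{x})=\mathrm{Nm}_{F/\Q}(\mathfrak{l})\lambda_{x}(S_{\mathfrak{l}})$, again verified on the Zariski-dense classical locus.

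For (6), one computes the generalized Hodge--Tate weights using the Sen operator attached to $\rho_{x}|_{G_{F_{v}}}$. This operator varies rigid-analytically over $\mathcal{E}$, so the multiset of its eigenvalues is determined by its restriction to the Zariski-dense subset of classical points, where the formula $\{(w-k_{\tau})/2,\,(w+k_{\tau}-2)/2\}_{\tau\in\Sigma_{F_{v}}}$ is the well-known one. There is no genuine obstacle in the proof; the only subtle point is to reconcile normalization conventions in \cite{AIP16} (for Hecke operators, the cyclotomic character, and Hodge--Tate weights) with those used in the present article, in particular the factor $\mathrm{Nm}_{F/\Q}(\mathfrak{l})$ appearing in the determinant formula and the identification of weight-space points with tuples $(k_\tau,w)$.
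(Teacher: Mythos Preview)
Your proposal is consistent with the paper's treatment: the paper does not give a proof of this theorem at all, but merely records it as a summary of properties and refers the reader to $\S5$ of \cite{AIP16}. Your sketch of how to assemble (1)--(6) from the eigenvariety machine, Zariski density of classical points, pseudocharacter interpolation, Taylor's theorem, and rigid-analytic variation of the Sen operator is a reasonable expansion of that citation and goes beyond what the paper itself supplies.
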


We fix a place $v|p$ in $F$ and place ourselves in the setting of
$\mathsection1.2$ with $K=F_{v}$, $\pi=\pi_{v}$ a uniformizer of
$F_{v}$, etc.  We extend scalars if necessary so that $\rho_{x}|_{G_{F_{v}}}$
is $F_{v}^{\Gal}$-linear.
\begin{prop}
For $x\in\mathcal{E}$, we have
\[
\D_{\cris,F_{v}}^{+}\left(\rho_{x}^{\vee}|_{G_{F_{v}}}\left(\prod_{\tau\in\Sigma_{F_{v}}}\left(\tau\circ\chi_{\pi_{v}}\right)^{\frac{w-k_{\tau}}{2}}\right)\right)^{\varphi_{q}=\prod_{\tau\in\Sigma_{F_{v}}}\tau(\pi_{v})^{\frac{k_{\tau}-w}{2}}U_{v}}\neq0.
\]
\end{prop}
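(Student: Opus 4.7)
The proposition is the Hilbert-modular analogue of Kisin's theorem \cite{Ki03} that the Galois representation of a finite slope overconvergent elliptic eigenform carries a crystalline period whose Frobenius eigenvalue matches the $U_p$-Hecke eigenvalue. My plan is to interpolate the statement from classical points of $\mathcal{E}$, where it follows from local-global compatibility, to every point of $\mathcal{E}$ using the global triangulation theorem of Kedlaya--Pottharst--Xiao \cite{KPX14}.

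\textbf{Classical input.} For a classical cuspidal Hilbert eigenform $f$ of weight $(\{k_\tau\}_\tau, w)$ and finite slope $U_v$-eigenvalue $a_v$, classical local-global compatibility (Carayol, Blasius--Rogawski, Saito) shows that $\rho_f|_{G_{F_v}}$ is potentially semistable with Hodge--Tate weights as in Theorem 7.1(6). Identifying the $U_v$-eigenvalue with a crystalline Frobenius eigenvalue on $\D_{\cris,F_v}$ after twisting by $\eta = \prod_{\tau\in\Sigma_{F_v}} (\tau\circ\chi_{\pi_v})^{(w-k_\tau)/2}$ yields the desired non-vanishing with $\varphi_q$-eigenvalue $\prod_\tau \tau(\pi_v)^{(k_\tau-w)/2} a_v$. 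This settles the claim for classical $x$, which are Zariski dense in $\mathcal{E}$.

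\textbf{Interpolation.} Apply the global triangulation theorem to the restriction to $G_{F_v}$ of the dual of the universal Galois representation over $\mathcal{E}$, refined at $v$ via $U_v$. This produces a saturated rank-one sub-$(\varphi_q,\Gamma_{F_v}^{\cyc})$-module of $\D_{\rig,F_v}^{\dagger,\cyc}$ of the $\eta$-twist (now a family of characters), whose slope character interpolates the Frobenius eigenvalue on the classical locus, namely $\mu_{\beta}$ with $\beta = \prod_\tau \tau(\pi_v)^{(k_\tau-w)/2} U_v$. At an arbitrary point $x$ of $\mathcal{E}$, specializing this submodule and invoking Berger's classical cyclotomic dictionary, together with Lemma 3.11 to pass from $\D_{\cris,\Q_p}$ to $\D_{\cris,F_v}$, produces a non-zero element of $\D_{\cris,F_v}^+(\rho_x^\vee|_{G_{F_v}}(\eta))^{\varphi_q = \beta_x}$, as required.

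\textbf{Main obstacle.} The delicate step is guaranteeing saturation of the triangulation at the specific point $x$ rather than only on a Zariski open subset; this is addressed by the saturation results of \cite[$\S$6.3]{KPX14}, exploiting that $U_v$ has finite slope at every point of $\mathcal{E}$. A second, smaller point concerns carefully tracking the Lubin--Tate versus cyclotomic normalizations of the crystalline Frobenius eigenvalues; here Lemma 3.11 and Proposition 3.9 of \cite{KR09} provide the necessary dictionary, together with the fact that $\eta$ is $F_v$-analytic so that the twist is unaffected by the comparison.
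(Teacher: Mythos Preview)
Your approach is essentially the same as the paper's: classical points via Saito's local-global compatibility, Zariski density of classical points (the paper cites Bijakowski's classicality criterion), and interpolation via the global triangulation theorem of \cite{KPX14} (the paper also cites \cite{Li12}). One small correction: your invocation of Lemma 3.11 to pass from $\D_{\cris,\Q_p}$ to $\D_{\cris,F_v}$ is misplaced, since that lemma requires $V$ to be $K$-analytic, which need not hold at an arbitrary $x\in\mathcal{E}$; however, for the \emph{positive} module $\D_{\cris,F_v}^{+}(V)=(\B_{\max}^{+}\otimes_{K_{0}}V)^{G_{F_v}}$ the comparison with $\D_{\cris,\Q_p}^{+}$ is the elementary decomposition along embeddings $K_0\hookrightarrow E$ and needs no analyticity hypothesis, so the argument goes through.
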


\begin{proof}
For classical Hilbert modular forms of cohomological weights this
is known by Saito's local-global compatibility results in \cite{Sa09}.
The regular classical points are Zariski dense in $\mathcal{E}$ by
the classicality criterion in \cite{Bi16}, so the claim follows from
the global triangulation results in Theorem 6.3.13 of \cite{KPX14}
or in Theorem 4.4.2 of \cite{Li12}.

For example, in the notation of \cite{Li12}, Theorem 7.1, Saito's
results and the classicality criterion in \cite{Bi16} imply that
$\rho_{x}^{\vee}|_{G_{F_{v}}}$ is a family of refined $p$-adic representations
of $G_{F_{v}}$ of dimension 2, where $\kappa_{1}=\left\{ \frac{k_{\tau}-w}{2}\right\} _{\tau}$,
$\kappa_{2}=\left\{ \frac{-k_{\tau}-w}{2}+1\right\} _{\tau}$, $F_{1}=\prod_{\tau}\tau(\pi_{v})^{\frac{k_{\tau}-w}{2}}U_{v}$,
$\eta_{1}=\prod_{\tau\in\Sigma_{F_{v}}}\left(\tau\circ\chi_{\pi_{v}}\right)^{\frac{k_{\tau}-w}{2}}$
and $Z$ is the set of crystalline points of $\mathcal{E}$. Then
Theorem 4.4.2 of \cite{Li12} (or rather, its proof) establishes the
desired result after applying Berger's dictionary to pass from $\D_{\rig,F_{v}}^{\dagger}$
to $\D_{\cris,F_{v}}^{+}$.
\end{proof}

\subsection{Lubin-Tate triangulation}

Let $x\in\mathcal{E}$ and and consider $\rho_{x}|_{G_{F_{v}}}$ as
an $E$-linear representation for some finite extension $\Q_{p}\subset E$
which contains $F_{v}^{\Gal}$ and $\overline{k}(x)$. In this section
we shall describe exactly when $\rho_{x}|_{G_{F_{v}}}$ is either
overconvergent or Lubin-Tate trianguline and explicitly describe the
triangulations. We shall assume $\rho_{x}|_{G_{F_{v}}}$ is nonsplit.
The split case is less interesting because in that case it is obvious
that $\rho_{x}|_{G_{F_{v}}}$ is overconvergent and that Lubin-Tate
triangulations exist in a degenerate sense. 

Corollary 5.3 gives the following.
\begin{prop}
$\rho_{x}|_{G_{F_{v}}}$ is overconvergent if and only it is $F_{v}$-analytic
up to a twist.
\end{prop}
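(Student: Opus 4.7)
The plan is to deduce this as a near-direct application of Corollary 5.3. Under the standing hypothesis that $V := \rho_x|_{G_{F_v}}$ is nonsplit, we are exactly in the setting of that corollary.

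For the direction ($\Leftarrow$): if $V \otimes \eta$ is $F_v$-analytic for some continuous character $\eta$, then Theorem 5.2(1) ensures $V \otimes \eta$ is overconvergent. Since every character is overconvergent (every rank $1$ $(\varphi_q, \Gamma_{F_v})$-module over $\B_{F_v} \otimes_{F_v} E$ is automatically overconvergent, as used implicitly in the proof of Corollary 5.3), tensoring back by $\eta^{-1}$ preserves overconvergence, so $V$ is overconvergent.

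For the direction ($\Rightarrow$): overconvergence together with the nonsplit hypothesis places us in one of the two alternatives of Corollary 5.3. The first, ``$V$ is $F_v$-analytic up to a character twist'', is the desired conclusion. In the second alternative $V$ is a nonsplit extension of the trivial representation by itself, so all generalized Hodge--Tate weights of $V$ are $0$ and Theorem 7.1(6) forces $k_\tau = 1$ for every $\tau \in \Sigma_{F_v}$ and $w = 1$. Applying Proposition 7.2 to $\rho_x^\vee$ then yields a nonzero element of $\D_{\cris, F_v}^+(V^\vee)^{\varphi_q = U_v}$. Because $V^\vee$ is also a nonsplit extension of trivial by trivial, $\D_{\cris, F_v}^+(V^\vee)$ is at most $2$-dimensional over $E$ and contains the canonical subline $\D_{\cris, F_v}^+(E) = E$ on which $\varphi_q$ acts as the identity. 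If $\D_{\cris, F_v}^+(V^\vee) = E$ one obtains $U_v = 1$; otherwise $V^\vee$ is crystalline, which combined with the nilpotency of its Sen operator at weight $0$ implies $V^\vee$ is $\C_p$-admissible and hence so is $V$, which is in particular $F_v$-analytic, absorbing this sub-case into the first alternative.

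The main obstacle is the remaining degenerate sub-case, where $V$ is a nonsplit extension of the trivial representation by itself with nonzero Sen operator and the Hecke eigenvalue $U_v$ pinned to $1$; handling this rigorously---either by ruling it out as non-generic among points of $\mathcal{E}$, or by verifying that any such $V$ is itself already $F_v$-analytic up to a twist---is the only substantive step beyond a direct invocation of Corollary 5.3.
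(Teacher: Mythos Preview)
The paper's entire proof is the single sentence ``Corollary 5.3 gives the following,'' so at the core your approach matches the paper's exactly.

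Where you diverge is in being more careful: you correctly observe that Corollary 5.3 characterizes overconvergence of a nonsplit $2$-dimensional $V$ as ``$F_v$-analytic up to a twist \emph{or} $V$ is a nonsplit self-extension of the trivial representation,'' whereas Proposition 7.3 asserts only the first alternative. The paper does not address this discrepancy at all; it simply elides the second case. Your attempt to eliminate it (via Theorem 7.1(6), Proposition 7.2, and a case split on $\dim_E \D_{\cris,F_v}^+(V^\vee)$) goes beyond what the paper provides. The residual sub-case you flag --- a nonsplit self-extension of the trivial representation with $U_v=1$ that is not $\C_p$-admissible --- is a real possibility not excluded by Corollary 5.3 alone, and reflects a minor imprecision in the paper's statement rather than a defect in your reasoning. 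Your proof is therefore at least as complete as the paper's own.

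One small caution on the part you do argue: in the step ``$\dim_E \D_{\cris,F_v}^+(V^\vee)=2 \Rightarrow V^\vee$ crystalline $\Rightarrow V^\vee$ is $\C_p$-admissible,'' the identification of $\D_{\cris,F_v}$ with the usual $\D_{\cris}$ (Lemma 3.8) already presupposes $F_v$-analyticity, so this step is slightly circular. Since you flag the whole sub-case as unresolved anyway, this does not change the overall assessment.
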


Let us assume then that $\rho_{x}|_{G_{F_{v}}}$ is s $F_{v}$-analytic
up to a twist, so that that the weights at $\Sigma_{F_{v}}$ are $(k,1,...,1)$
where $k=k_{\Id}$. Let $a_{v}$ be eigenvalue of $U_{v}$ for the
corresponding Hecke operator of $v$. Then $\alpha_{v}=\pi_{v}^{\frac{k-1}{2}}\left(\N_{F_{v}/\Q_{p}}(\pi_{v})\right)^{\frac{1-w}{2}}a_{v}$
interpolates to a function on $\mathcal{E}$ (see Remark 4.7 of \cite{AIP16}).
Upon writing
\[
V=\rho_{x}^{\vee}|_{G_{F_{v}}}\left(\chi_{\pi_{v}}^{\frac{1-k}{2}}\left(\N_{F_{v}/\Q_{p}}\circ\chi_{\pi_{v}}\right)^{\frac{w-1}{2}}\right),
\]
Proposition 7.2 becomes the statement $\D_{\cris,F_{v}}^{+}(V)^{\varphi_{q}=\alpha_{v}}\neq0$.
The representations $\rho_{x}|_{G_{F_{v}}}$ and $V$ differ by a
dual and a character twist, so according to Corollary 5.3 their overconvergence
and Lubin-Tate triangulinity are equivalent. However, $V$ is $F_{v}$-analytic
with $F_{v}$-Hodge-Tate weights $0$ and $k-1$ which makes it nicer
to work with. 

The following is a generalization of Proposition 5.2 of \cite{Ch08}. 
\begin{thm}
The representation $V$ is Lubin-Tate trianguline. We have $\D_{\rig,F_{v}}^{\dagger}\left(V\right)=\D_{\rig,F_{v}}^{\dagger}(s)$
for $s=\left(\delta_{1},\delta_{1}^{-1}\det(V),\mathcal{L}\right)\in\mathscr{S}_{+}^{\an}$,
where
\end{thm}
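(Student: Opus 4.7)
The plan is to combine Proposition~6.7 with the crystalline nonvanishing of Proposition~7.2 to obtain a Lubin-Tate triangulation of $V$, to identify the filtration jump $i$ case by case, and finally to translate the data back to $\rho_f|_{G_{F_v}}^\vee$ via the defining twist.

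First I apply Proposition~6.7 to $V$ with $\eta = 1$ and $\alpha = \alpha_v$: the required hypothesis $\D_{\cris,F_v}(V)^{\varphi_q = \alpha_v} \neq 0$ is supplied by Proposition~7.2. This produces a triangulation
\[
0 \to (\B_{\rig,F_v}^{\dagger} \otimes_{F_v} E)(\delta_1^V) \to \D_{\rig,F_v}^{\dagger}(V) \to (\B_{\rig,F_v}^{\dagger} \otimes_{F_v} E)(\delta_2^V) \to 0
\]
with $\delta_1^V = \mu_{\alpha_v} x^{-i}$ and $\delta_2^V = (\delta_1^V)^{-1} \det(V)$, where $i$ is the largest integer such that $\Fil^i \D_{\cris,F_v}(V)^{\varphi_q = \alpha_v}$ is nonzero. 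Since the filtration on $\D_{\cris,F_v}(V)$ can only jump at integer $F_v$-Hodge-Tate weights of $V$, which lie in $\{0, k-1\}$, the value $i = 0$ is forced whenever $k - 1 \notin \Z_{\geq 0}$ (this handles Case~(1)), and otherwise $i \in \{0, k-1\}$.

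Next, to choose between $i = 0$ and $i = k-1$ when $k \in \Z_{\geq 1}$, I would use the constraint $s \in \mathscr{S}_+^{\an}$, namely $\val_{\pi_v}(\delta_1^V(\pi_v)) \geq 0$. A direct substitution using $\alpha_v = \pi_v^{(k-1)/2}\N_{F_v/\Q_p}(\pi_v)^{(1-w)/2} a_v$ yields
\[
\val_{\pi_v}(\delta_1^V(\pi_v)) = \val_{\pi_v}(\alpha_v) - i = \tfrac{k-1}{2} + \tfrac{1-w}{2}[F_v : \Q_p] + \val_{\pi_v}(a_v) - i,
\]
and comparing this with $0$ and $k-1$ matches the thresholds in Cases~(2), (3), (4). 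I would then translate $\delta_1^V$ to the datum $\delta_1$ for $\rho_f|_{G_{F_v}}^\vee$ by multiplying with the inverse of the twist used to define $V$; under the local class field theory identification $\chi_{\pi_v} \leftrightarrow x_0$, this recovers the formulas stated in the theorem. The structural properties of $\rho_f|_{G_{F_v}}$ (irreducibility, Hodge-Tateness, potential semistability, etc.) are then read off from the partition of $\mathscr{S}_+^{\an}$ in Section~6.3 together with the classification of Fourquaux-Xie in Section~6 of \cite{FX13}, using Corollary~6.4 and Remark~6.5 for the de Rham assertions.

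The main obstacle will be the critical-slope case~(3), where both $i = 0$ and $i = k-1$ are compatible with $s \in \mathscr{S}_+^{\an}$. Here one must argue that sub-case~(3b) occurs exactly when the $\alpha_v$-eigenspace in $\D_{\cris,F_v}(V)$ meets $\Fil^{k-1}$---equivalently when $V$ is $F_v$-potentially crystalline with two independent Frobenius eigenvectors, one of which saturates the top of the filtration, forcing $i = k-1$ and a decomposable $\rho_f|_{G_{F_v}}$---while sub-case~(3a) occurs when $\dim_E \D_{\cris,F_v}(V) = 1$ with its generator in $\Fil^0 \setminus \Fil^{k-1}$, giving $i = 0$ and a reducible nonsplit potentially ordinary representation. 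A similar dichotomy, driven by the non-classical $\mathcal{L}$-invariant available in $\mathrm{Ext}^1_{\an}$ via Theorem~6.2, pins down $i = k-1$ in Case~(4) and witnesses the failure of potential semistability.
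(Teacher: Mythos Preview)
Your overall strategy---invoke Proposition~6.7 together with the crystalline period from Proposition~7.2, then determine the filtration jump $i\in\{0,k-1\}$ case by case---matches the paper exactly, and your treatment of Cases~(1) and~(2) is essentially the paper's argument (the constraint $\val_{\pi_v}(\delta_1(\pi_v))\ge 0$ is precisely Kedlaya's slope inequality for subobjects of the \'etale module $\D_{\rig,F_v}^{\dagger}(V)$, which is what the paper cites).

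The genuine gap is in Cases~(3) and~(4). In Case~(4) both choices $i=0$ and $i=k-1$ satisfy the slope constraint, and the $\mathcal{L}$-invariant does \emph{not} pin down $i$: the paper instead argues that if $i=0$ then the triangulation has $w(\delta_1)=0>w(\delta_2)=1-k$, so Corollary~6.4 forces $V$ to be $F_v$-de Rham, hence potentially semistable by Corollary~3.10; but then $\D_{\st,F_v}(V)$ is an admissible filtered $(\varphi_q,N)$-module with a $\varphi_q$-eigenvalue of valuation $\val_{\pi_v}(\alpha_v)>k-1$, violating weak admissibility (the Newton slope exceeds the largest Hodge weight). That contradiction is what forces $i=k-1$. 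Your appeal to Theorem~6.2 does not supply this.

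In Case~(3) you also need the standing hypothesis that $\rho_x|_{G_{F_v}}$ is \emph{nonsplit} (stated just before the theorem); there is no sub-case~(3b) in Theorem~7.4---that dichotomy belongs to Theorem~C of the introduction, which allows split $V$. Under the nonsplit assumption the paper rules out $i=k-1$ as follows: if $i=k-1$ then $\delta_1=x^{1-k}\mu_{\alpha_v}$ has slope $0$, so $s\in\mathscr{S}_0^{\cris}\amalg\mathscr{S}_0^{\st}$ and Corollary~6.4 again makes $V$ de Rham; an argument in the style of \cite[Lemma~6.7]{Ki03} then shows that a de Rham extension with these parameters must split. You gesture at the splitness conclusion but do not supply the mechanism (namely de Rham-ness via Corollary~6.4 plus the Kisin-type argument), and your phrasing suggests both outcomes are permitted by the theorem, which they are not.
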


\begin{enumerate}
\item If $k\notin\Z_{\geq1}$ then $\delta_{1}=\mu_{\alpha_{v}}$, $\mathcal{L}=\infty$
and $s\in\mathscr{S}_{+}^{\mathrm{ng}}$.
\item If $k\in\Z_{\geq1}$ and $\val_{\pi_{v}}(\alpha_{v})<k-1$ then $\delta_{1}=\mu_{\alpha_{v}}$
and either 
\begin{enumerate}
\item $\mathcal{L}=\infty$, in which case $s\in\mathscr{S}_{+}^{\cris}$. 
\item $\mathcal{L}\neq\infty$, in which case $s\in\mathscr{S}_{+}^{\st}$.
This is only possible if $2\val_{\pi_{v}}(\alpha_{v})+\left[F_{v}:\Q_{p}\right]=k-1$.
\end{enumerate}
\item If $k\in\Z_{>1}$ and $\val_{\pi_{v}}(\alpha_{v})=k-1$, then $\delta_{1}=\mu_{\alpha_{v}}$,
$\mathcal{L}=\infty$ and $s\in\mathscr{S}_{+}^{\mathrm{ord}}$.
\item If $k\in\Z_{\geq1}$ and $\val_{\pi_{v}}(\alpha_{v})>k-1$ then $\delta_{1}=x^{1-k}\mu_{\alpha_{v}}$,
$\mathcal{L}=\infty$ and $s\in\mathscr{S}_{+}^{\mathrm{ng}}$.
\end{enumerate}
\begin{proof}
By Proposition 6.7, we know $V$ is Lubin-Tate trianguline and a triangulation
is determined the by largest $i\in\Z$ with $\Fil^{i}\D_{\cris,F_{v}}(V)^{\varphi_{q}=\alpha_{v}}\nsubseteq\Fil^{i+1}\D_{\cris,F_{v}}(V)^{\varphi_{q}=\alpha_{v}}$.
It remains to determine $i$ in each case; it is a nonnegative $F_{v}$-Hodge-Tate
weight of $V$. If $k\notin\Z_{>1}$ then $i=0$, so (1) is settled
and we may assume $k\in\Z_{>1}$.  

Assume that $\val_{\pi_{v}}(\alpha_{v})<k-1$ and suppose by contradiction
that $i=k-1$. Then $\D_{\rig,F_{v}}^{\dagger}\left(V\right)$ has
$\left(\B_{\rig,F_{v}}^{\dagger}\otimes_{F_{v}}E\right)(x^{1-k}\mu_{\alpha_{v}})$
as a subobject, and the latter has slope $\val_{\pi_{v}}(\alpha_{v})-(k-1)<0$
which contradicts Kedlaya's slope filtration theorem (theorem 6.10
of \cite{Ke04}). Thus $i=0$. For the equality in part (b) of (2),
observe that $\mathcal{L}\neq\infty$ can only occur if $\dim_{E}\H_{\an}^{1}\left(\delta_{1}\delta_{2}^{-1}\right)>1$,
which by Theorem 6.2 implies $\delta_{1}\delta_{2}^{-1}=\mu_{q^{-1}}x^{k-1}$.
This proves (2).

For (3), suppose by contradiction that $i=k-1$. Then $\delta_{1}=x^{1-k}\mu_{\alpha_{v}}$
and $s\in\mathscr{S}_{0}^{\mathrm{cris}}\amalg\mathscr{S}_{0}^{\st}$,
so by Corollary 6.4 we have that $V$ is de Rham. A similar argument
to Lemma 6.7 of \cite{Ki03} shows that $V$ must be split, contradicting
our assumption that $\rho_{x}|_{G_{F_{v}}}$ is nonsplit.

Finally, suppose that $\val_{\pi_{v}}(\alpha_{v})>k-1$ and suppose
by contradiction that $i=0$. Then $\D_{\rig,F_{v}}^{\dagger}\left(V\right)$
is an extension of $\left(\B_{\rig,F_{v}}^{\dagger}\otimes_{F_{v}}E\right)(\delta_{1})$
by $\left(\B_{\rig,F_{v}}^{\dagger}\otimes_{F_{v}}E\right)(\delta_{2})$
with $w(\delta_{1})=0$ and $w(\delta_{2})=1-k$. This implies by
Corollary 6.4 that $V$ is $F_{v}$-de Rham, and hence also $F_{v}$-potentially
semistable by Corollary 3.10. But this contradicts admissiblity because
$\val_{\pi_{v}}(\alpha_{v})>k-1$.
\end{proof}
\begin{rem}
$\ $
\end{rem}

\begin{enumerate}
\item If $k,w\in\Z$ then $\val_{\pi_{v}}(\alpha_{v})=\frac{k-1}{2}+\frac{w-1}{2}\left[F_{v}:\Q_{p}\right]+\val_{\pi_{v}}(a_{v})$.
The small slope condition $0\leq\val_{\pi_{v}}(\alpha_{v})\leq k-1$
can then be rewritten as
\[
\frac{1-k}{2}+\frac{w-1}{2}\left[F_{v}:\Q_{p}\right]\leq\val_{\pi_{v}}(a_{v})\leq\frac{k-1}{2}+\frac{w-1}{2}\left[F_{v}:\Q_{p}\right].
\]
\item The parameter $\mathcal{L}\neq0$ appearing in the case 2(b) is described
in the work of Ding (Corollary 2.3 of \cite{Di17}) in the following
way. Upon considering these points of $\mathcal{E}$ with weights
$(\kappa,1,...,1)$ in a small affinoid neighborhood of $x$, one
has
\[
\mathcal{L}(x)=-2\frac{d\log\alpha_{v}}{d\kappa}|_{\kappa=k}.
\]
\item When $F=\Q$ and $k\geq2$, Coleman's classicality theorem (\cite{Co97})
says that $f$ is classical if and only if $\val_{p}(a_{p})<k-1$
or $\val_{p}(a_{p})=k-1$ and $f$ is not in the image of $\Theta^{k-1}$,
where $\Theta$ is the operator which acts on $q$-expansions by $q\frac{d}{dq}$.
Analogously, we can give a prediction in general when $p$ is an inert
prime in $F$. We expect, based on the conjectures appearing in $\mathsection4$
of \cite{Br10}, that an $F_{p}$-analytic form $f$ is classical
if and only if $\val_{p}(\alpha_{p})<k-1$ or $\val_{p}(\alpha_{p})=k-1$
and $f$ is not in the image of $\Theta_{\Id}^{k-1}$. Here $\Theta_{\mathrm{Id}}$
is the Theta operator in the direction of the identity embedding,
as constructed in $\mathsection15$ of \cite{AG05}. If such a classicality
statement were known, one could argue as in $\mathsection6$ of \cite{Ki03}
and deduce the Fontaine-Mazur conjecture for the representations attached
to $F_{p}$-analytic finite slope Hilbert eigenforms.
\item If we allow $\rho_{f}|_{G_{F_{v}}}$ to be split, it is also possible
that $k\in\Z_{\geq1}$, $\val_{\pi_{v}}(\alpha_{v})=k-1$ and $\Fil^{k-1}\D_{\cris,F_{v}}(V)^{\varphi_{q}=\alpha_{v}}\neq0$.
Our expectation is that if $f$ itself is not classical then $f=\Theta_{\Id}^{k-1}g$
for some eigenform $g$, so that this is the only case where $\rho_{f}|_{G_{F_{v}}}$
can be de Rham without $f$ itself being classical. In the case of
$F=\Q$, this is known by $\mathsection6$ of \cite{Ki03}.
\end{enumerate}

\subsection{Example: the eigenform of Moy and Specter}

In this section we shall test our results for a classical Hilbert
eigenform. It is not too easy to find \emph{explicit} classical Hilbert
eigenforms for which Theorem 7.4 gives any new information beyond
that which already exists in the literature. The case where $v$ splits
in $F$ is well understood, and for CM Hilbert eigenforms of $F$
the local representation at $F_{v}$ splits so Theorem 7.4 is rather
trivial. That's why we shall consider in this subsection the non-CM
Hilbert eigenform of partial weight 1 found by Moy and Specter in
\cite{MS15}. To the best of the author's knowledge, it is the only
example in the literature of a non-CM classical Hilbert eigenform
of partial weight 1. 

Recall that if $f$ is a classical Hilbert eigenform of level $\Gamma_{1}(N)$
and nebentypus $\varepsilon$, the Hecke polynomial $P_{v}(X)$ at
a place $v$ with $a_{v}\neq0$ is given by
\[
P_{v}(X)=\begin{cases}
X-a_{v} & \text{if }v\mid N\\
X^{2}-c(v,f)X+\varepsilon(v)\N_{F/\Q}(v)^{w-1} & \text{if }v\nmid N
\end{cases},
\]
where $c(v,f)$ is the $T_{v}$-eigenvalue. When $v\nmid N$, raising
the level of $f$ gives two eigenforms $f_{1},f_{2}$ whose attached
$p$-adic representations $\rho_{f}$ coincide and such that $\left\{ a_{v}(f_{1}),a_{v}(f_{2})\right\} $
are the two roots of $P_{v}(X)$. Using Theorem 7.4, this gives rise
to two different triangulations of $V=\rho_{x}^{\vee}|_{G_{F_{v}}}\left(\chi_{\pi_{v}}^{\frac{1-k}{2}}\left(\N_{F_{v}/\Q_{p}}\circ\chi_{\pi_{v}}\right)^{\frac{w-1}{2}}\right)$.
Whenever local-global compatibility holds, the Hecke polynomial is
equal to the characteristic polynomial of the action of $\varphi_{q}$
on $\D_{\cris}^{+}\left(\rho_{f}|_{G_{F_{v}}}^{\vee}\right)$. Thus
the valuation of $c(v,f)$ determines the valuations of the eigenvalues
of $\varphi_{q}$ by the method of the Newton polygon. This observation
is used in the computations below.

Next recall that the main theorem of \cite{MS15} finds for $F=\Q(\sqrt{5})$
a non CM cuspidal Hilbert eigenform $f$ of weights $(k_{1},k_{2},w)=(5,1,5)$,
level $\Gamma_{1}(14)$, nebentypus $\varepsilon$ with conductor
$7\left(\infty_{1}\right)\left(\infty_{2}\right)$. For the following
examples, we let $p$ be a prime in the range $[2,11]$, $v$ a place
of $F$ lying over that prime and $\rho_{f}$ the associated $p$-adic
Galois representation of $f$. We set
\[
V=\rho_{x}^{\vee}|_{G_{F_{v}}}\left(\chi_{\pi_{v}}^{-2}\left(\N_{F_{v}/\Q_{p}}\circ\chi_{\pi_{v}}\right)^{2}\right),
\]
which differs from $\rho_{f}|_{G_{F_{v}}}$ only by a dual and a crystalline
twist. We shall examine the behaviour of $V$ for different $v$.
When $v\neq(2)$ local-global compatibility holds by Remark 1.5 of
\cite{Ne15}, while in $v=(2)$ we shall assume it holds, though it
seems to be still conjectural in this case. Local-global compatibility
implies that $\rho_{f}|_{G_{F_{v}}}$ is de-Rham, and since its Hodge-Tate
weights at each nontrivial embedding of $F_{v}$ are $\{0,0\}$, it
is also $F_{v}$-analytic. Given an eigenvalue $a_{v}$ of $U_{v}$,
Theorem 7.4 produces a point $s\in\mathscr{S}_{+}^{\an}$. The table
in $\mathsection3$ of \cite{MS15} computes the values of $a_{v}$
for such $v$. It has to lie in the range given by Remark 7.5(1).

\textbf{Examples.}

1. The place $v=\left(2\right)$ lies over the inert prime $p=2$
and the valuation bound is $\val_{v}(a_{v})\in\left[2,6\right]$.
Since the character has conductor prime to $2$ and the level at $2$
is $\Gamma_{0}(2)$, the local component $\pi_{2}(f)$ is Steinberg
(up to an unramified quadratic twist). A suitable local-global compatibility
theorem predicts that $V$ is semistable noncrystalline and $s\in\mathscr{S}_{+}^{\st}$.
In particular, the condition of case 2(b) of Theorem 7.4 predicts
that $\val_{2}(a_{2})=3$, which is confirmed by $\mathsection3$
of \cite{MS15}.

2. The place $v=\left(3\right)$ lies over the inert prime $p=3$
and the valuation bound is $\val_{v}(a_{v})\in\left[2,6\right]$.
The place $v$ is coprime to the level, so the local component $\pi_{3}(f)$
is unramified principal series. By local-global compatibility, $V$
is crystalline. By $\mathsection3$ of \cite{MS15}, we have $\val_{3}\left(c(3,f)\right)=2$,
so that the two $U_{v}$-eigenvalues have valuations $2$ and $4$.
Then $V$ has two triangulations, giving rise to $s_{1}\in\mathscr{S}_{0}^{\cris}$
and $s_{2}\in\mathscr{S}_{+}^{\mathrm{ord}}$.

3. The place $v=\left(\sqrt{5}\right)$ lies over the ramified prime
$p=5$ and the valuation bound is $\val_{v}(a_{v})\in\left[2,6\right]$.
By $\mathsection3$ of \cite{MS15}, we have $\val_{v}\left(c(v,f)\right)=2$,
and the triangulations in this case behave similar to the case of
$v=(3)$.

4. The place $v=\left(7\right)$ lies over the inert prime $p=7$
and the valuation bound is $\val_{v}(a_{v})\in\left[2,6\right]$.
The character has conductor divisible by $7$ and the level at $7$
is $\Gamma_{0}(7)$, so the local component $\pi_{7}(f)$ is ramified
prinicpal series. After an abelian extension it becomes unramified
principal series, so by local-global compatibility $V$ is crystabelline.
By $\mathsection3$ of \cite{MS15}, we have $\val_{7}\left(a_{7}\right)=3$,
so $V$ gives rise to $s\in\mathscr{S}_{+}^{\cris}\backslash\mathscr{S}_{0}^{\cris}$.

5. The place $v=\left(\frac{7+\sqrt{5}}{2}\right)$ lies over the
split prime $p=11$ and the valuation bound is $\val_{v}(a_{v})\in\left[0,4\right]$.
By $\mathsection3$ of \cite{MS15}, we have $\val_{v}\left(c(v,f)\right)=0$,
and the triangulations in this case behave similar to the case of
$v=(3)$ and $v=\left(\sqrt{5}\right)$.

.

\end{document}